\numberwithin{equation}{section}
\newtheorem{theorem}[equation]{Theorem}
\newtheorem{corollary}[equation]{Corollary}
\newtheorem{lemma}[equation]{Lemma}
\newtheorem{proposition}[equation]{Proposition}
\theoremstyle{definition}
\newtheorem{remark}[equation]{Remark}
\newtheorem{definition}[equation]{Definition}
\newcommand{\R}{\mathbb{R}}
\newcommand{\N}{\mathbb{N}}
\newcommand{\Z}{\mathbb{Z}}
\newcommand{\T}{\mathbb{T}}
\newcommand{\Q}{\mathbb{Q}}
\newcommand{\dif}{\mathrm{d}}
\let\rho\varrho
\let\epsilon\varepsilon
\renewcommand{\C}{\mathbb{C}}
\newcommand{\Prime}{\mathbb{P}}
\newcommand{\boldA}{\mathbf{A}}
\def\PZdefchar#1{
  \expandafter\def\csname frak#1\endcsname{\mathfrak{#1}}
  \expandafter\def\csname bf#1\endcsname{\mathbf{#1}}
  \expandafter\def\csname scr#1\endcsname{\mathcal{#1}}
  \expandafter\def\csname cal#1\endcsname{\mathcal{#1}}}
\def\PZdefloop#1{\ifx#1\PZdefloop\else\PZdefchar#1\expandafter\PZdefloop\fi}
\newcommand{\dscrU}{\Delta\scrU}
\newcommand{\bbI}{\mathbb{I}}
\newcommand{\bbP}{\mathbb{P}}
\newcommand{\FT}{\calF} 
\newcommand{\stPol}[1]{(#1)^{\Gamma}}
\newcommand{\const}{\mathrm{const}}
\newcommand{\divides}{\vert}
\newcommand*{\DMO}[1]{\expandafter\DeclareMathOperator\csname #1\endcsname {#1}}
\DeclarePairedDelimiter\abs{\lvert}{\rvert}
\DeclarePairedDelimiter\norm{\lVert}{\rVert}
\providecommand\given{}
\newcommand\SetSymbol[1][]{%
\nonscript\:#1\vert
\allowbreak
\nonscript\:
\mathopen{}}
\DeclarePairedDelimiterX\Set[1]\{\}{\renewcommand\given{\SetSymbol[\delimsize]}#1}
\DeclarePairedDelimiter\meas{\lvert}{\rvert}
\DeclarePairedDelimiter\card{\lvert}{\rvert}
\DeclarePairedDelimiter\floor{\lfloor}{\rfloor}
\DeclarePairedDelimiter\ceil{\lceil}{\rceil}
\DeclarePairedDelimiterX\innerp[2]{\langle}{\rangle}{#1,#2}
\newcommand{\one}{\mathbf{1}}
\newcommand{\ind}[1]{\one_{#1}}
\def\clap#1{\hbox to 0pt{\hss#1\hss}}
\def\mathrlap{\mathpalette\mathrlapinternal}
\def\mathrlapinternal#1#2{%
\rlap{$\mathsurround=0pt#1{#2}$}}
\author{Mariusz Mirek}
\address[Mariusz Mirek]{
  Department of Mathematics,
  Rutgers University,
Piscataway, NJ 08854, USA \&
Instytut Matematyczny,
Uniwersytet Wroc{\l}awski,
Plac Grunwaldzki 2/4,
50-384 Wroc{\l}aw
Poland}
\email{mariusz.mirek@rutgers.edu}
\author{Elias M. Stein}
\address[Elias M. Stein]{
Department of Mathematics,
Princeton University,
Princeton,
NJ 08544-100 USA}
\author{Pavel Zorin-Kranich}
\address[Pavel Zorin-Kranich]{Mathematical Institute, University of Bonn, Endenicher Allee 60, 53115 Bonn, Germany}
\email{pzorin@math.uni-bonn.de}
\thanks{Mariusz Mirek was partially supported by the Schmidt Fellowship and the IAS Found for Math.\ and by the National Science Center, NCN grant DEC-2015/19/B/ST1/01149.
Elias M. Stein was partially supported by NSF grant DMS-1265524.
Pavel Zorin-Kranich was partially supported by the Hausdorff Center for Mathematics and DFG SFB-1060.}
\begin{document}
\title[Jump inequalities for operators of Radon type on $\mathbb Z^{d}$]{Jump inequalities for translation-invariant\\
operators of Radon type on  $\mathbb Z^{d}$}
\begin{abstract}
We prove strong jump inequalities for a large class of operators of
Radon type in the  discrete and ergodic theoretical settings. These
inequalities are the $r=2$ endpoints of the $r$-variational estimates
studied in \cite{MR3681393}.
\end{abstract}
\maketitle

\setcounter{tocdepth}{1}

\tableofcontents

\section{Introduction}
The purpose of this paper is to initiate the study of strong uniform $\lambda$-jump inequalities in the context of  discrete translation-invariant operators of Radon type and their applications to ergodic theory.
We extend the previously known results for $r$-variation $V^r$ (see \eqref{eq:def:Vr} for the definition), where $r>2$, to endpoint results
formulated in terms of the jump quasi-seminorm $J_2^p$ (see \eqref{eq:jump-space} for the definition) for these operators. These
are stated in Theorem~\ref{thm:discrete-jump}, Corollary~\ref{cor:3}, Theorem~\ref{thm:11}, and Theorem~\ref{thm:4}.
To describe further the operators we consider and the properties we prove for them, we need to fix the notation and terminology used.

\subsection{Basic setup}
Let $\N_0 = \Set{0,1,2,\dotsc}$ denote the set of non-negative integers.
Throughout the article, we fix a finite set of multi-indices $\Gamma \subset \N_{0}^{k} \setminus \Set{0}$ with lexicographical order.
We denote by $\R^{\Gamma}$ the space of tuples of real numbers labeled by multi-indices $\gamma=(\gamma_1, \ldots, \gamma_k)\in\Gamma$, so that $\R^{\Gamma} \cong \R^{\abs{\Gamma}}$, and similarly for $\Z^{\Gamma} \cong \Z^{\abs{\Gamma}}$.
The canonical polynomial mapping is given by
\[
\R^k \ni x = (x_{1},\dotsc,x_{k}) \mapsto \stPol{x}:= (x_1^{\gamma_1}\dotsm x_k^{\gamma_k}:{\gamma\in\Gamma}) \in \R^\Gamma,
\]
and it restricts to a mapping from $\Z^{k}$ to $\Z^{\Gamma}$.

Let $\Omega$ be a non-empty convex body (not necessarily symmetric) in $\R^k$, which simply means that $\Omega$ is a bounded convex open subset of $\R^k$.
For $t>0$, we define its dilates
\[
\Omega_t := \Set{x\in\R^{k} \given t^{-1}x\in\Omega}.
\]
We will additionally assume that $B(0,c_{\Omega}) \subseteq \Omega \subseteq B(0,1) \subset \R^{k}$ for some $c_{\Omega}\in(0, 1)$, where $B(x, t)$ denotes an open Euclidean ball in $\R^k$ centered at $x\in\R^k$ with radius $t>0$.
This ensures that $\Omega_t\cap\Z^k=\{0\}$ for all $t\in(0, 1)$. A typical choice of $\Omega_{t}$ is a ball of radius $t$ for some norm on $\R^{k}$.

For finitely supported functions $f:\Z^{\Gamma}\to \C$, for every $x\in\Z^\Gamma$ and $t>0$, we define the discrete averaging Radon operator by setting
\begin{align}
\label{eq:247}
M_tf(x):= \frac{1}{\card{\Omega_{t}\cap\Z^{k}}} \sum_{y \in\Omega_{t}\cap\Z^{k}} f(x-{\stPol{y}}).
\end{align}
We will also consider the discrete truncated singular Radon operator
\begin{align}
\label{eq:248}
H_tf(x):= \sum_{y \in\Omega_{t}\cap\Z^{k}\setminus\{0\}} f(x-{\stPol{y}})K(y),
\end{align}
where  $K : \R^{k}\setminus\Set{0} \to \C$ is a  Calder\'on--Zygmund kernel satisfying the following conditions.
\begin{enumerate}
\item The size condition. For every $x\in\R^k\setminus\Set{0}$, we have
\begin{equation}
\label{eq:size-unif}
\abs{K(x)} \leq \abs{x}^{-k}.
\end{equation}
\item The cancellation condition
\begin{equation}
\label{eq:cancel}
\int_{\Omega_{R}\setminus \Omega_{r}} K(y) \dif y = 0,
\quad \text{ for any }\quad
0<r<R<\infty.
\end{equation}
\item
  The H\"older continuity condition.
  For some $\sigma\in(0, 1]$ and every $x, y\in\R^k\setminus\Set{0}$ with $\abs{y}\leq \abs{x}/2$, we have
\begin{equation}
\label{eq:K-modulus-cont}
\abs{K(x)-K(x+y)}
\leq
\abs{y}^{\sigma} \abs{x}^{-k-\sigma}.
\end{equation}
\end{enumerate}

We will follow the notation  used in \cite{arxiv:1808.04592}.
For any $\lambda>0$ and $\bbI \subset \R$, the \emph{$\lambda$-jump counting function} of a function $f : \bbI \to \C$ is defined by
\begin{align}
  \label{eq:def:jump}
  \begin{split}
N_{\lambda}(f)& :=
N_{\lambda}(f(t) : t\in\bbI)\\
&:=\sup \Set{J\in\N \given \exists_{\substack{t_{0}<\dotsb<t_{J}\\ t_{j}\in\bbI}}  : \min_{0<j\leq J} \abs{f(t_{j})-f(t_{j-1})} \geq \lambda},
  \end{split}
\end{align}
and the $r$-variation seminorm by
\begin{align}
\label{eq:def:Vr}
  \begin{split}
V^{r}(f)
:=&V^{r}(f(t) : t\in\bbI)\\
:=&
\begin{cases}
\sup_{J\in\N} \sup_{\substack{t_{0}<\dotsb<t_{J}\\ t_{j}\in\bbI}}\Big(\sum_{j=1}^{J}  \abs{f(t_{j})-f(t_{j-1})}^{r} \Big)^{1/r},  
&
0< r <\infty,\\
\sup_{\substack{t_{0}<t_{1}\\ t_{j}\in\bbI}} \abs{f(t_{1})-f(t_{0})},  
& r = \infty,
\end{cases}
\end{split}
\end{align}
where the former supremum is taken over all finite increasing sequences in $\bbI$.

Throughout the article, $(X,\calB(X),\mathfrak m)$ denotes a $\sigma$-finite measure space.
For a function $f : X \times \bbI \to \C$ and an exponent $1<p<\infty$, the jump quasi-seminorm on $L^p(X)$ is defined by
\begin{align}
  \label{eq:jump-space}
  \begin{split}
  J^{p}_{2}(f):=J^{p}_{2}(f : X \times \bbI \to \C)&:=J^{p}_{2}((f(\cdot, t))_{t\in\bbI}):=J^{p}_{2}((f(\cdot, t))_{t\in\bbI} : X  \to \C) \\
  &:=\sup_{\lambda>0} \norm[\big]{ \lambda N_{\lambda}(f(\cdot, t): t\in \bbI)^{1/2} }_{L^{p}(X)}.
  \end{split}
\end{align}

In view of our assumptions on $\Omega$, we see that $M_tf\equiv f$ and $H_tf\equiv 0$ for $t\in(0, 1)$, therefore only $t\ge1$ will be relevant for us.
The function $(0, \infty)\ni t\mapsto \Omega_t\cap\Z^k$ takes only countably many values by the monotonicity of the sets $\Omega_t$.
Hence, \eqref{eq:jump-space} will be always taken over the parameters $t$ restricted to countable sets $\bbI\subseteq (0, \infty)$.  

\subsection{Statement of the main results}
In \cite{arXiv:1512.07518,MR3681393}, strong maximal and $r$-variational estimates on $\ell^p(\Z^\Gamma)$ were obtained for the operators $M_t$ and $H_t$ with the sharp range of exponents $p\in(1, \infty)$ and $r\in(2, \infty)$.
The main aim of this paper is to strengthen these results and provide strong uniform $\ell^p(\Z^\Gamma)$ bounds for $\lambda$-jumps that are a substitute for the $r$-variational estimates at the $r=2$ endpoint.
In the continuous case, such endpoint estimates were obtained by Jones, Seeger, and Wright \cite{MR2434308}, see also \cite{arxiv:1808.09048} for an alternative approach to some of their results.

Our main discrete result is the following theorem.
\begin{theorem}
\label{thm:discrete-jump}
Let $T_t$ be either $M_t$ or $H_t$.
Then, for every $p\in(1, \infty)$, there is $0<C_{p}<\infty$ such that, for every $f\in \ell^p(\Z^{\Gamma})$, we have
\begin{equation}
\label{eq:93}
J^{p}_{2}((T_tf)_{t\ge1} : \Z^{\Gamma}  \to \C)
\le C_{p}
\norm{f}_{\ell^p(\Z^{\Gamma})}.
\end{equation}
\end{theorem}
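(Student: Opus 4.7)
The plan is to establish \eqref{eq:93} via a two-scale decomposition of the jump norm into long and short pieces. Starting from the standard numerical inequality, valid for any sequence $(a_t)_{t\in\bbI}$ of complex numbers and any $\lambda > 0$,
\[
\lambda N_\lambda(a_t : t\in\bbI)^{1/2} \lesssim \lambda N_{\lambda/3}(a_{2^n} : n\in\N_0)^{1/2} + \Big(\sum_{n\ge 0} V^2(a_t : t \in [2^n,2^{n+1}] \cap \bbI)^2\Big)^{1/2},
\]
Theorem~\ref{thm:discrete-jump} reduces to two estimates: a \emph{long-jump} inequality along the dyadic scales $(T_{2^n}f)_{n \in \N_0}$, and a \emph{short-variation} inequality over the dyadic blocks $[2^n,2^{n+1}]$. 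The short-variation piece should be extractable from the $r$-variational results of \cite{MR3681393} with any $r > 2$: within a single dyadic block the differences $T_t f - T_{2^n} f$ involve only a thin annular correction, and combining Plancherel at $\ell^2$ with the $\ell^p$ variational bounds of the cited paper yields the needed square-summed estimate over $n$.

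The long-jump inequality is the core of the matter, and I would attack it via the Ionescu-Wainger circle-method decomposition used in \cite{arXiv:1512.07518,MR3681393}. Writing the Fourier multiplier of $T_{2^n}$ as a sum over major arcs indexed by rationals $a/q$ with $q$ in an Ionescu-Wainger family of denominators, plus a minor-arc remainder with quantitative Fourier decay, one dispatches the remainder by combining the Pisier-Xu type numerical comparison $\sup_{\lambda>0} \lambda N_\lambda(a_t)^{1/r} \lesssim V^r(a_t)$, valid for all $r > 2$, with the minor-arc $V^r$ bounds already established in \cite{MR3681393}. This turns the available Fourier decay into a jump inequality essentially for free, so the entire remaining burden falls on the major-arc multipliers.

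The principal obstacle is thus producing a \emph{jump analogue} of the Ionescu-Wainger multi-frequency multiplier theorem that underpins the $r$-variational bounds of \cite{MR3681393}. The plan is to interpolate two endpoints. At $\ell^2(\Z^\Gamma)$, Plancherel combined with a sampling and Calder\'on transference argument should reduce the major-arc jump estimate to the continuous Jones-Seeger-Wright jump theorem \cite{MR2434308} for the Euclidean analogue of $T_t$, modulated by the rational phases coming from $a/q$; the contributions from distinct denominators in the Ionescu-Wainger family are combined via a Rademacher-Menshov-type splitting that produces only a logarithmic loss at $\ell^2$. At $\ell^p$ for $p$ away from $2$, the $V^r$-bounds for $r > 2$ from \cite{MR3681393} furnish the dominant estimate, and interpolation between these endpoints should swallow the logarithmic loss and yield the full sharp range $p \in (1,\infty)$ claimed in \eqref{eq:93}.
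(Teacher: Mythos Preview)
Your outline has two genuine gaps, and the paper's actual proof avoids both by a different organisation.

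\textbf{Short variations.} With the dyadic splitting you propose, the short-variation piece is
\[
\Big(\sum_{n\ge 0} V^2\big(T_t f : t \in [2^n,2^{n+1}]\big)^2\Big)^{1/2},
\]
a $V^2$-based square function. This is \emph{not} dominated by any $V^r$ quantity with $r>2$ (since $V^2\ge V^r$), so the $r$-variational results of \cite{MR3681393} do not deliver it. The ``thin annular correction'' remark gives you a bound on each block but no summability in $n$. The paper sidesteps this entirely by splitting along a \emph{subexponential} sequence $n^\tau$, $\tau\in(0,1)$ small depending on $p$: then condition~\ref{MF:A} (an $\ell^1$-kernel bound) yields $\|V^1(K_{2^t}:t\in[n^\tau,(n+1)^\tau])\|_{\ell^1}\lesssim n^{\tau-1}$, and the short-variation square function is bounded by Minkowski and Young alone, with no need for any prior variational input.

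\textbf{Major arcs and interpolation.} Your plan is to obtain a $J_2^2$ estimate (with a logarithmic loss) at $p=2$ and then interpolate against the $V^r$-bounds of \cite{MR3681393} at $p=p_0$. But interpolating a jump estimate against an $r$-variation estimate with $r>2$ does not produce a jump estimate at the intermediate $p$: the target spaces are different, and no such interpolation inequality holds. What the paper does instead is prove the $J_2^p$ bound \emph{directly} for every $p$, using two tools that you do not mention. First, a \emph{Hilbert-space valued} Ionescu--Wainger multiplier theorem (Theorem~\ref{thm:IW-mult}), which allows the continuous square-function estimate \eqref{eq:Phi-square} to be transferred to the discrete multi-frequency setting at every $p\in(1,\infty)$ (with only a $\log S$ loss per frequency scale $S$, which is absorbed by Gauss-sum decay, not by interpolation against $V^r$). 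Second, for the large scales, the \emph{sampling principle for jumps} \cite[Theorem~1.7]{arxiv:1808.04592}, which transfers the continuous jump inequality \eqref{eq:Phi-jump} to the discrete periodic multipliers directly at each $p$; this relies on the identification of $J_2^p$ with a real interpolation space established in \cite{arxiv:1808.04592}. Neither of these ingredients is replaceable by the scalar Ionescu--Wainger theorem plus $V^r$-interpolation.
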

Using \cite[Lemma 2.12]{arxiv:1808.04592}, for $r>2$, we obtain that
\begin{equation}
\label{eq:V-vs-jump}
\norm{V^{r}(T_tf: t\ge1)}_{\ell^{p, \infty}(\Z^{\Gamma})}
\lesssim_{p, r}
J^{p}_{2}((T_tf)_{t\ge1} : \Z^{\Gamma}  \to \C).
\end{equation}
One now sees that \eqref{eq:93}, combined with \eqref{eq:V-vs-jump} and real interpolation, implies that, for every $p\in(1, \infty)$ and $r\in(2, \infty]$, there is $0<C_{p, r}<\infty$ such that
\begin{align}
\label{eq:89}
\norm{\sup_{t\ge1}\abs{T_tf}}_{\ell^p(\Z^{\Gamma})}&\le C_{p, \infty}\norm{f}_{\ell^p(\Z^{\Gamma})},\\
\label{eq:92}
\norm{V^{r}(T_tf: t\ge1)}_{\ell^p(\Z^{\Gamma})}
&\le C_{p, r}
\norm{f}_{\ell^p(\Z^{\Gamma})},
\end{align}
for every $f\in \ell^p(\Z^{\Gamma})$.
Thus Theorem~\ref{thm:discrete-jump} refines the main results of \cite{MR3681393}, and 
in this sense the jump inequality \eqref{eq:93} is an endpoint estimate for \eqref{eq:92} when $r=2$.

The proof of Theorem~\ref{thm:discrete-jump} is contained in Section~\ref{sec:disrad}.
It is presented as part of an abstract theory of discrete convolution operators of Radon type established in Section \ref{sec:discrete}.
The main result of Section~\ref{sec:discrete} is Theorem~\ref{thm:4}, which under some conditions (see conditions \ref{MF:A}, \ref{MF:B}, \ref{MF:C}, and \ref{MF:D} in Section~\ref{sec:discrete}) provides strong uniform $\lambda$-jump inequalities for the discrete Radon transforms.
The advantage of this approach is that we handle $M_t$ and $H_t$ (or even more general multipliers satisfying \ref{MF:A}, \ref{MF:B}, \ref{MF:C}, and \ref{MF:D}) simultaneously.
This is very much in spirit of the theory of the continuous Radon transforms.
The results from Section~\ref{sec:discrete} may thus be thought of as discrete analogues of the results from \cite{MR2434308} and \cite{arxiv:1808.09048}.

Theorem~\ref{thm:discrete-jump} immediately applies to Radon type operators modeled on arbitrary polynomial mappings.
Namely, let $P=(P_1,\dotsc, P_m):\Z^k\to\Z^m$ be a polynomial mapping, where each component $P_j:\Z^k\to\Z$ is a polynomial of $k$ variables with integer coefficients.
As in \eqref{eq:247} and \eqref{eq:248}, for finitely supported functions $f:\Z^{m}\to \C$, for every $x\in\Z^m$ and $t>0$, we define  the discrete averaging and singular Radon operators  along $P$ by setting
\begin{align}
\label{eq:256}
\begin{split}
M_t^Pf(x)&:= \frac{1}{\card{\Omega_{t}\cap\Z^{k}}} \sum_{y \in
\Omega_{t}\cap\Z^{k}} f(x-P(y)),\\
H_t^Pf(x)&:= \sum_{y \in
\Omega_{t}\cap\Z^{k}\setminus\{0\}} f(x-P(y))K(y),
\end{split}
\end{align}
where $K$ is a Calder\'on--Zygmund kernel satisfying \eqref{eq:size-unif}, \eqref{eq:cancel}, and \eqref{eq:K-modulus-cont}.

Taking $\Gamma=\Set{(\gamma_1, \ldots,\gamma_k)\in\N_0^k \given 0<\abs{\gamma_1}+\ldots+\abs{\gamma_k}\le {\rm deg}P}$ in Theorem~\ref{thm:discrete-jump} and invoking the lifting procedure for the Radon transforms described in \cite[p.~515]{MR1232192}, (see also \cite{arXiv:1512.07518}), we obtain the following results.
\begin{corollary}
\label{cor:3}
Let $T_t^P$ be either $M_t^P$ or $H_t^P$.
Then, for every $p\in(1, \infty)$, there is $0<C_{p}<\infty$ such that, for every $f\in \ell^p(\Z^{m})$, we have
\begin{equation}
\label{eq:257}
J^{p}_{2}((T_t^Pf)_{t\ge1} : \Z^{m}  \to \C)
\le C_{p}
\norm{f}_{\ell^p(\Z^{m})}.
\end{equation}
In particular, \eqref{eq:257} implies that, for every $p\in(1, \infty)$ and $r\in(2, \infty]$, there is $0<C_{p, r}<\infty$ such that
\begin{align}
  \label{eq:258}
\norm{\sup_{t\ge1}\abs{T_t^Pf}}_{\ell^p(\Z^{m})}&\le C_{p, \infty}\norm{f}_{\ell^p(\Z^{m})},\\
\label{eq:259}
\norm{V^{r}(T_t^Pf: t\ge1)}_{\ell^p(\Z^{m})}&
\le C_{p, r}
\norm{f}_{\ell^p(\Z^{m})},
\end{align}
for every $f\in \ell^p(\Z^{m})$.
Moreover, the constants $C_{p}$ and $C_{p,r}$ depend on $k$ and $\deg P$, but not otherwise on the coefficients of $P$.
\end{corollary}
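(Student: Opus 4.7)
Our plan is to deduce Corollary~\ref{cor:3} from Theorem~\ref{thm:discrete-jump} via the polynomial lifting trick of Magyar--Stein--Wainger (\cite[p.~515]{MR1232192}). With $\Gamma = \{\gamma \in \N_0^k\setminus\{0\} : \abs{\gamma}\le\deg P\}$ as prescribed in the statement, each component $P_j$ of $P$ is an integer linear combination of the monomials $y^\gamma$, $\gamma\in\Gamma$, so there is a unique integer linear map $L : \Z^\Gamma \to \Z^m$ with $L(\stPol{y}) = P(y)$ for every $y \in \Z^k$. This map intertwines the two families of operators: $L(z - \stPol{y}) = L(z) - P(y)$, so applying $L$ pointwise takes the canonical operators $T_t \in \{M_t, H_t\}$ on $\Z^\Gamma$ of Theorem~\ref{thm:discrete-jump} to the polynomial operators $T_t^P \in \{M_t^P, H_t^P\}$ on $\Z^m$. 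Decomposing $\Z^m$ into cosets of the image sublattice $L(\Z^\Gamma)$ and noting that $T_t^P f$ on any coset depends only on $f$ on that coset, we may assume without loss of generality that $L$ is surjective; then $\ker L$ is a lattice of rank $|\Gamma|-m$ in $\R^\Gamma$.

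Given $f\in\ell^p(\Z^m)$ of finite support, fix a large $N$, set $B_N := \{z\in\Z^\Gamma : \|z\|_\infty\le N\}$, and consider the lift $F_N := (f\circ L)\cdot \ind{B_N}$. Standard lattice point counting then yields $\|F_N\|_{\ell^p(\Z^\Gamma)}^p = (c_L + o(1))\, N^{|\Gamma|-m}\, \|f\|_{\ell^p(\Z^m)}^p$ as $N\to\infty$, with $c_L > 0$ depending only on $\ker L$. On the other hand, since $\Omega_t\cap\Z^k\subset B(0,t)$, for every $z\in\tfrac12 B_N$ and every $1\le t\le c_0 N$ with a small absolute $c_0$, all the translates $z-\stPol{y}$ appearing in $T_t F_N(z)$ still lie in $B_N$, whence $T_t F_N(z) = T_t^P f(L(z))$. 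Applying Theorem~\ref{thm:discrete-jump} to $F_N$, restricting the $\ell^p(\Z^\Gamma)$-norm of $\lambda N_\lambda^{1/2}$ to $\tfrac12 B_N$, and regrouping the resulting sum by fibers of $L$ produces the factor $(c_L + o(1))N^{|\Gamma|-m}$ on both sides, which cancels. Letting $N\to\infty$, so that $L(\tfrac12 B_N)\uparrow\Z^m$ and the truncation on $t$ vanishes, monotone convergence of $N_\lambda$ in its index set followed by the supremum over $\lambda$ delivers \eqref{eq:257} with a constant depending only on the constant of Theorem~\ref{thm:discrete-jump} for this particular $\Gamma$, hence only on $p$, $k$, and $\deg P$.

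The estimates \eqref{eq:258} and \eqref{eq:259} then follow from \eqref{eq:257} by exactly the combination of the pointwise inequality \eqref{eq:V-vs-jump} with real interpolation that was used in the passage from Theorem~\ref{thm:discrete-jump} to \eqref{eq:89}--\eqref{eq:92}. The main technical point of the whole argument is the asymptotic lattice-point count together with the cancellation of the $N^{|\Gamma|-m}$ factors on the two sides of the transferred inequality; everything else is routine bookkeeping inside the classical Magyar--Stein--Wainger transference scheme.
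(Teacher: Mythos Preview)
Your argument is essentially the lifting procedure that the paper invokes by reference, so the approach matches. Two small slips are worth fixing: the translates $z-\stPol{y}$ stay in $B_N$ only when $t\lesssim N^{1/\deg P}$ (since $\abs{\stPol{y}}_\infty\le t^{\deg P}$), not for $t\le c_0 N$; and restricting to $\tfrac12 B_N$ produces a fiber count of order $(N/2)^{\abs{\Gamma}-m}$ rather than $N^{\abs{\Gamma}-m}$, so the factors do not literally cancel but differ by a harmless constant depending only on $\abs{\Gamma}-m$. Also, the reference \cite{MR1232192} is Stein's \emph{Harmonic Analysis} monograph rather than the Magyar--Stein--Wainger paper; the transference here is the classical polynomial lifting, not the sampling principle of the latter.
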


\subsection{Historical background and some further results}
Maximal inequalities for the discrete averaging Radon transforms $M_t^{P}$ with $k=m=1$ and arbitrary polynomials $P$ were obtained by Bourgain in a foundational series of papers \cite{MR937581,MR937582} culminating in \cite{MR1019960}.
Variational refinements of Bourgain's results with $r\in (2,\infty)$ were obtained in \cite{arXiv:1402.1803} for $p=2$ and in \cite{arxiv:1403.4085} for $p$ in a small neighborhood of $2$.
Variational estimates for multidimensional variants of Bourgain's averaging operator were investigated in \cite{MR3595493}.

Systematic studies of discrete singular Radon transforms
\begin{align}
\label{eq:120}
Hf(x):= \sum_{y \in
\Z^{k}\setminus\{0\}} f(x-{\stPol{y}})K(y),
\end{align}
were initiated in \cite{MR1056560}.
However, at that time their $\ell^p(\Z^{\Gamma})$ boundedness was only obtained for $p$ in a certain neighborhood of $2$.
Estimates in the full range of $p\in(1, \infty)$ for \eqref{eq:120} were first obtained by Ionescu and Wainger \cite{MR2188130}, see also \cite{MR3738256} for a different approach.
Their ideas were taken up in \cite{arXiv:1512.07518,MR3681393} in order to prove sharp variational estimates for $M_t$ and $H_t$.
In this article we further developed their ideas.
In particular, Theorem~\ref{thm:discrete-jump} and Corollary~\ref{cor:3} contain the main results of \cite{MR3681393}.

Finally, let us mention that the results in Section \ref{sec:discrete} can be applied to operators modeled on the set of prime numbers $\bbP$.
Namely, we fix non-negative integers $k', k''$ such that $k'+k''=k$, and for $t>0$ we define $\pi_{\Omega}(t):=\sum_{y'\in\N^{k'}}\sum_{y''\in\bbP^{k''}} \ind{\Omega_t}(y', y'')$.
As above, for finitely supported functions $f:\Z^{m}\to \C$ and for every $x\in\Z^m$, we introduce discrete Radon operators over the primes given by
\begin{align}
\label{eq:1}
\begin{split}
\tilde{M}_t^Pf(x)&:= \frac{1}{\pi_{\Omega}(t)}\sum_{y'\in\N^{k'}}\sum_{y''\in\bbP^{k''}}  f(x-P(y', y''))\ind{\Omega_t}(y', y''),\\
\tilde{H}_t^Pf(x)&:= \sum_{y'\in\N^{k'}}\sum_{y''\in(\pm\bbP)^{k''}}  f(x-P(y', y''))\ind{\Omega_t}(y', y'')K(y', y'')\bigg(\prod_{j=1}^{k''}\log|y_j''|\bigg),
\end{split}
\end{align}
where $K$ is a Calder\'on--Zygmund kernel satisfying \eqref{eq:size-unif}, \eqref{eq:cancel}, and \eqref{eq:K-modulus-cont}. The logarithmic weight in definition of the operator  $\tilde{H}_t^P$ corresponds to the density of $\bbP$.
Theorem~\ref{thm:4} allows us to prove that if $T_t^P$ is either $\tilde{M}_t^P$ or $\tilde{H}_t^P$, then \eqref{eq:257} holds for all $p\in(1, \infty)$, and consequently we recover \eqref{eq:259} and \eqref{eq:258}, which, under more restrictive conditions on $K$, were studied in \cite{arxiv:1803.05406}.
See Remark~\ref{rem:1} for more details.

\subsection{Applications in pointwise ergodic theory}
Another motivation for considering $\lambda$-jumps is their applicability to pointwise convergence problems in ergodic theory.
The classical strategy for handling pointwise convergence of $T_tf(x)$ (as $t\to0$ or $t\to\infty$) requires $L^p(X)$ boundedness for the corresponding maximal function $\sup_{t>0}\abs{T_tf(x)}$, reducing the matters to proving pointwise convergence of $T_tf(x)$ for a dense class of $L^p(X)$ functions.
Although in questions in harmonic analysis there are many natural dense subspaces which could be used to establish pointwise convergence, in the discrete or in ergodic theoretical questions this may not be the case.

Bourgain's approach to pointwise ergodic theorems consists in quantifying the convergence property.
In Bourgain's articles starting with \cite[Lemma 7.3]{MR937581}, so-called oscillation inequalities were used to this purpose.
More refined estimates involving $r$-variations were first obtained by Krause \cite{arXiv:1402.1803} and extended to the full range of exponents in \cite{MR3681393}.
The $\lambda$-jump inequalities quantify the pointwise convergence even more precisely.

The above-mentioned operators have an ergodic theoretical interpretation.
Let $(X, \calB(X), \frakm)$ be a $\sigma$-finite measure space with a family of invertible commuting and measure preserving transformations $S_\gamma:X\to X$, $\gamma\in\Gamma$.
For any function $f:X\to \C$, for every $x\in X$ and $t>0$, let
\begin{align}
\label{eq:254}
\begin{split}
\bfM_t f(x)
&:= \frac{1}{\card{\Omega_{t}\cap\Z^{k}}} \sum_{y \in \Omega_{t}\cap\Z^{k}}
f\big( \big( \prod_{\gamma\in\Gamma} S_\gamma^{(y^{\gamma})} \big) x \big),\\
\bfH_t f(x)
&:= \sum_{y \in \Omega_{t}\cap\Z^{k}\setminus\{0\}}
f\big( \big( \prod_{\gamma\in\Gamma} S_\gamma^{(y^{\gamma})} \big) x \big)K(y),
\end{split}
\end{align}
where $K$ is the Calder\'on--Zygmund kernel as above.
The setting of Theorem~\ref{thm:discrete-jump} can be recovered with $X=\Z^{\Gamma}$, $\calB(X)=\bfP(\Z^\Gamma)$ the $\sigma$-algebra of all subsets of $\Z^{\Gamma}$, $\frakm=\abs{\; \cdot\; }$ the counting measure on $\Z^{\Gamma}$, and $S_{\gamma}:\Z^{\Gamma} \to \Z^{\Gamma}$ the shift operator acting on the $\gamma$-th coordinate, i.e.\ $S_{\gamma}(x) = x-e_{\gamma}$, where $e_{\gamma}$ is the $\gamma$-th standard basis vector in $\Z^{\Gamma}$.
The setting of Corollary~\ref{cor:3} can be similarly recovered with $X=\Z^{m}$ and $S_{\gamma} = \prod_{j=1}^{m} S_{j}^{a_{j,\gamma}}$, where $S_{j}(x)=x-e_{j}$ is the shift operator acting on the $j$-th coordinate, and $a_{j,\gamma}$ are the coefficients of the polynomial $P_{j}(y) = \sum_{\gamma\in\Gamma} a_{j,\gamma} y^{\Gamma}$.

We now state our main ergodic theorem.

\begin{theorem}
\label{thm:11}
Let $\bfT_t$ be either $\bfM_t$ or $\bfH_t$.
Then, for every $p\in(1, \infty)$, there is $0<C_{p}<\infty$ such that, for every $f\in L^p(X)$, we have
\begin{equation}
\label{eq:260}
J^{p}_{2}((\bfT_t f)_{t\ge1} : X  \to \C)
\le C_{p}
\norm{f}_{L^p(X)}.
\end{equation}
In particular, for every $f\in L^p(X)$, there exists a function
$f^*\in L^p(X)$ such that
\[
\lim_{t\to\infty}\bfT_t f(x)=f^*(x)
\]
for $\frakm$-almost every $x\in X$, and in $L^p(X)$.
\end{theorem}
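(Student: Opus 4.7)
The plan is to deduce Theorem~\ref{thm:11} from Theorem~\ref{thm:discrete-jump} (or rather its polynomial-image version Corollary~\ref{cor:3}) by the Calder\'on transference principle, and then to extract both the almost-everywhere and the $L^{p}$ convergence from the resulting jump inequality by standard arguments; the only serious analytic input is the discrete jump inequality, which has already been secured.

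\emph{Step 1: Transference.} Fix a finite subset $\bbI_{0}\subseteq\bbI$. All shift vectors $\stPol{y}$ appearing in $\bfT_{t}$ for $t\in\bbI_{0}$ lie in some cube $[-M,M]^{\Gamma}$ with $M=M(\bbI_{0})<\infty$. Write $S^{m}:=\prod_{\gamma\in\Gamma}S_{\gamma}^{m_{\gamma}}$ for $m\in\Z^{\Gamma}$. For $x\in X$ and an integer $N\gg M$, set
\[
F_{x}(n):=f(S^{-n}x)\,\ind{\{\abs{n}_{\infty}\le N+M\}}(n),\qquad n\in\Z^{\Gamma}.
\]
Commutativity of the $S_{\gamma}$ and measure preservation then give $(\bfT_{t}f)(S^{-n}x)=(T_{t}F_{x})(n)$ for $\abs{n}_{\infty}\le N$ and $t\in\bbI_{0}$, where $T_{t}$ is the corresponding discrete operator from \eqref{eq:247} or \eqref{eq:248}. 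Applying Theorem~\ref{thm:discrete-jump} to $F_{x}$, summing the resulting pointwise jump bound over $\{\abs{n}_{\infty}\le N\}$, and integrating in $x$ yields
\[
(2N+1)^{\abs{\Gamma}}\,\norm[\big]{\lambda N_{\lambda}(\bfT_{t}f:t\in\bbI_{0})^{1/2}}_{L^{p}(X)}^{p}
\le C_{p}^{p}\,(2(N+M)+1)^{\abs{\Gamma}}\,\norm{f}_{L^{p}(X)}^{p}.
\]
Dividing by $(2N+1)^{\abs{\Gamma}}$, letting $N\to\infty$, taking $\sup_{\lambda>0}$, and finally passing to the monotone limit $\bbI_{0}\uparrow\bbI$ delivers \eqref{eq:260}.

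\emph{Step 2: Convergence and main obstacle.} Once \eqref{eq:260} is established, $N_{\lambda}(\bfT_{t}f(x):t\in\bbI)<\infty$ for every $\lambda>0$ outside a single $\frakm$-null set; a scalar function on the ordered set $\bbI$ with this property is Cauchy at infinity, so $f^{*}(x):=\lim_{t\to\infty}\bfT_{t}f(x)$ exists $\frakm$-a.e. Combining \eqref{eq:260} with the transferred version of \eqref{eq:V-vs-jump} and the trivial estimate $\sup_{t\ge 1}\abs{\bfT_{t}f}\le\abs{\bfT_{1}f}+V^{r}(\bfT_{t}f:t\ge 1)$ for any fixed $r>2$ gives $\sup_{t\ge1}\abs{\bfT_{t}f}\in L^{p}(X)$; Fatou then places $f^{*}$ in $L^{p}(X)$ and dominated convergence produces $\bfT_{t}f\to f^{*}$ in $L^{p}(X)$. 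The serious analytic work is already packaged in Theorem~\ref{thm:discrete-jump}, so what remains is bookkeeping; the only mild obstacle is that $\bbI$ is countable but typically infinite, which forces the preliminary truncation $\bbI_{0}\subseteq\bbI$ and the monotone-limit step above, using that $\bbI_{0}\mapsto N_{\lambda}(\bfT_{t}f:t\in\bbI_{0})$ is monotone.
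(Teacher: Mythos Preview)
Your proof is correct and follows exactly the route the paper indicates: the paper's own argument is the single sentence ``Theorem~\ref{thm:11} easily follows from Theorem~\ref{thm:discrete-jump} by invoking Calder\'on's transference principle,'' and your Step~1 is a faithful unpacking of that transference, while your Step~2 supplies the standard passage from a jump bound to pointwise and norm convergence that the paper leaves implicit.

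One small imprecision in Step~2: the inequality you cite as \eqref{eq:V-vs-jump} only yields $V^{r}(\bfT_{t}f)\in L^{p,\infty}(X)$, hence a weak-type bound for $\sup_{t}\abs{\bfT_{t}f}$, not the strong $L^{p}$ membership you then use for dominated convergence. The fix is immediate: either invoke the real interpolation step the paper mentions just after \eqref{eq:V-vs-jump} (which upgrades to the strong bound since \eqref{eq:260} holds for all $p\in(1,\infty)$), or simply transfer the discrete maximal inequality \eqref{eq:89} directly by the same Calder\'on argument you used in Step~1.
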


Theorem~\ref{thm:11} easily follows from Theorem~\ref{thm:discrete-jump} by invoking Calder\'on's transference principle \cite{MR0227354}.
In the discrete singular integral case $\bfT_t=\bfH_t$, Theorem~\ref{thm:11} extends a well-known theorem of Cotlar \cite{MR0084632}, who established pointwise convergence for the truncated ergodic Hilbert transform.

\subsection{Overview of the paper and methods}
We list first some of the technical innovations used in this paper.
\begin{enumerate}
\item Certain properties of $J_2^p$ proved in~\cite{arxiv:1808.04592}:
that the quantity is equivalent to a norm corresponding to a  certain real interpolation space (in the sense of Peetre's $K$-method),
see~\cite[Lemma 2.7 and Corollary 2.11]{arxiv:1808.04592}; also that
sampling methods of~\cite{MR1888798} arising in the passage from the
continuous case to the discrete case work for $J_2^p$, (although this
space is not of the form $L^p(B)$, with $B$ a Banach space). See the
sampling principle for jumps
\cite[Theorem 1.7]{arxiv:1808.04592} and a more general sampling
principle for real interpolation spaces \cite[Proposition 4.7]{arxiv:1808.04592}. 
\item The conclusion that the basic Ionescu--Wainger theorem holds for
multipliers that are operator-valued (in the Hilbert space setting),
which does not readily follow from the previously known scalar-valued
case. See Theorem~\ref{thm:IW-mult}.
\item The more efficient partition lemma (see
Lemma~\ref{lem:partition-O}) that gives $O(\log N)$ partitions, where
$O((\log N)^{D-1})$ partitions were needed before.
\item The technique of splitting long and short variations along
subexponential sequences (arising in~\eqref{eq:21}), which goes
back to \cite{arxiv:1403.4085}.  
\end{enumerate}

The proof of Theorem~\ref{thm:discrete-jump} will be given in Section~\ref{sec:disrad} as a consequence of the more general Theorem~\ref{thm:4}.
One of the novelties of the paper is that the present proof of Theorem~\ref{thm:4} is, to a significant extent, based on the ideas which work in the theory of the continuous Radon transforms.
However, many aspects are still more involved, mainly due to the arithmetic nature of the operators $M_t$ and $H_t$.
We can try to explain these complications as follows.
Multipliers corresponding to the discrete
Radon transforms are periodic functions, which turn out to be
concentrated around rational fractions with small denominators (on
``major arcs'' in the language of number theory), and even though the
sampling principles from  \cite{MR1888798} and \cite{arxiv:1808.04592}
provide optimal bounds for periodic Fourier multipliers, they cannot
be applied directly. Although it is possible to organize the parts of
the multipliers near rational points on the torus with denominators
$\leq N$ and express these as a combination of averaging operators and
periodic multipliers, it is necessary to use denominators up to
$\lcm(1,\dotsc,N)$, and it thus becomes difficult to retain control of
the $\ell^{p}$ operator norms for $p$ far away from $2$. A different
combinatorial organization of rational fractions with small
denominators was introduced by Ionescu and Wainger in \cite[Theorem
1.5]{MR2188130}, (see also \cite{arXiv:1512.07518} and
\cite{MR3738256}). It makes possible to exploit a strong
orthogonality in $\ell^p$ when $p$ is an even
integer, and thus their ideas allow one to control (up to a logarithmic loss) multipliers concentrated at rational frequencies with denominators $1,\dotsc,N$ at the expense of introducing an auxiliary family $\scrU_N$ of rational frequencies with denominators of order $o(e^{N^{\rho}})$, where $\rho>0$ can be arbitrarily small.
To be more specific, Ionescu and Wainger proved in \cite[Theorem 1.5]{MR2188130} that, if $\Theta$ is a multiplier supported in the unit cube in $\R^d$ and defines a bounded operator on $L^p(\R^d)$ for $p\in(1, \infty)$, then its periodic extension
\begin{equation}
\label{eq:255}
\Delta_N(\xi):
=\sum_{a/q \in\scrU_N}
\Theta(\epsilon_N^{-1}(\xi - a/q)),
\end{equation}
satisfies, with
the same range of $p\in(1, \infty)$ for some constant $C_{p, \rho, d}>0$ independent of $N\in\N$ and all $f\in\ell^p(\Z^{d})$, the inequality
\begin{align}
\label{eq:261}
\norm[\big]{ \FT^{-1}\big(\Delta_{N} \hat{f}\big)}_{\ell^p(\Z^{d})}
\le C_{p,\rho, d}
(\log N)
\norm{f}_{\ell^p(\Z^{d})},
\end{align}
where $0<\epsilon_N \le e^{-N^{\rho}}$.
We refer to Section~\ref{sec:iw} for more
detailed definitions.
The inequality \eqref{eq:261} was the main tool in the proof of $\ell^p$ boundedness of \eqref{eq:120}, and it was very useful in \cite{arXiv:1512.07518,MR3681393,MR3738256}.

We extend inequality \eqref{eq:261} by proving its vector-valued variant, see Theorem~\ref{thm:IW-mult}, which allows the underlying multipliers in \eqref{eq:261} to be operator-valued and act on a separable Hilbert space valued functions.
An immediate consequence of Theorem~\ref{thm:IW-mult} is the discrete Littlewood--Paley theory established in \cite{MR3738256}.

Theorem~\ref{thm:IW-mult}, the sampling principle for interpolation spaces from \cite{arxiv:1808.04592}, and the concepts from the circle method of Hardy and Littlewood, are the core of Section~\ref{sec:discrete} and another important novelty of the paper.
We first appeal to the observation introduced in \cite{arxiv:1403.4085}, which trivializes the estimates for short variations by considering jumps along sequences that grow subexponentially.
This argument did not work in \cite{MR3681393}, since the vector-valued version of the Ionescu--Wainger theorem was not available at that time.
Secondly, we use the circle method to construct certain multi-frequency multipliers to approximate the discrete Radon transforms.
Then the analysis of the approximating multiplier is split into two parts, where one distinguishes between small and large time parameters $t$ depending on the size of the underlying rational frequencies.
To control the small scales, we use a variant of  Rademacher--Menshov theorem independently introduced in \cite{MR2885959} and \cite{MR3595493}, and then appeal to the vector-valued version of inequality \eqref{eq:261}.
Here, Theorem~\ref{thm:IW-mult} is indispensable.
Large scales are treated by the sampling principle for jumps alluded to
earlier.

Finally, Appendix~\ref{sec:weyl} is devoted to a tool needed for
the above: control of multidimensional  Weyl sums with an appropriate
logarithmic loss. This appeared first in~\cite{arXiv:1512.07518} and
was in turn based on the corresponding approach in~\cite{MR1719802}
that allowed a power loss. Here we give a more precise formulation and
proof of the result, which implies the corresponding results in~\cite{MR1719802} and~\cite{arXiv:1512.07518}.

\subsection{Notation}
\begin{enumerate}
\item We write $A \lesssim_{D_{1},D_{2},\dotsc} B$ if there is a constant $C=C(D_{1},D_{2},\dotsc)>0$ such that $A\leq CB$.
We will omit some of the parameters $D_{i}$ when they are clear from the context, for instance we always allow implicit constants to depend on $\Gamma\subseteq\N^k_0$.
We write $A \simeq B$ if $A \lesssim B$ and $A\gtrsim B$ hold simultaneously.

\item
Let $\N=\Set{1,2,\dotsc}$ be the set of positive integers and $\N_0 = \N\cup\{0\}$.
For $N \in \N$, we set
\[
\N_N = \Set{1, 2, \dotsc, N}.
\]

\item
For a vector $x \in \R^d$, we will use the following norms
\[
\abs{x}_\infty = \max\Set{\abs{x_j} : 1 \leq j \leq d}, \quad \text{and} \quad
\abs{x}=\abs{x}_2 = \Big(\sum_{j = 1}^d \abs{x_j}^2\Big)^{1/2}.
\]
\item The standard scalar product on $\R^d$ will be denoted by
\[
x\cdot\xi=\langle x, \xi\rangle=\sum_{j=1}^dx_j\xi_j
\]
for any $x, \xi\in\R^d$.
\item
If $\gamma$ is a multi-index from $\N_0^k$, then
$\abs{\gamma} := \gamma_1 + \dotsb + \gamma_k$.
It should be clear from the context whether the argument of $\abs{\cdot}$ is a multi-index $\gamma\in\N_0^k$ or a vector $x\in\R^d$, so that $\abs{\cdot}$ can be interpreted accordingly.

\item
Let $\FT$ denote the Fourier transform on $\R^d$, defined for any function $f \in L^1(\R^d)$ by
\[
\FT f(\xi) = \int_{\R^d} f(x) e(\xi \cdot x) \dif x,
\quad
\text{ for } \quad\xi\in\R^{d},
\]
where
\[
e(y) := \exp(2\pi i y).
\]
The discrete Fourier transform of a function $f \in \ell^1(\Z^d)$ is denoted by
\[
\hat{f}(\xi) = \sum_{x \in \Z^d} f(x) e(\xi \cdot x),
\quad \text{ for } \quad
\xi\in\T^{d} = (\R/\Z)^{d}.
\]
For simplicity of notation, we denote by $\FT^{-1}$ the inverse Fourier transform on $\R^d$ or the inverse Fourier transform on the torus $\T^d\equiv[-1/2, 1/2)^d$ (Fourier coefficients), depending on the context.
\end{enumerate}

\subsection{Acknowledgment}
We thank the careful anonymous referee for pointing out several errors in a previous revision of this work.


\section{Hilbert space valued Ionescu--Wainger multiplier theorem}
\label{sec:iw}
The advantage of a Hilbert space valued version of the Ionescu--Wainger multiplier theorem, see Theorem~\ref{thm:IW-mult} below, is that we can directly transfer square function estimates from the continuous to the discrete setting without randomization.
In particular, Theorem~\ref{thm:IW-mult} implies \cite[Theorem 5.1]{MR3681393} (see also \cite[Theorem 5]{MR3738256});
but we apply it to different square functions.
We present the full proof with streamlined notation and arguments.

\begin{theorem}
\label{thm:IW-mult}
For every $\rho>0$, there exists a family $(P_{N})_{N\in\N}$ of subsets $P_{N} \subset \N$, satisfying
\begin{gather}
\N_N\subseteq P_N\subseteq\N_{\max\{N, e^{N^{\rho}}\}}, \label{eq:IW-denom-bound}\\
N_1\le N_2 \implies P_{N_1}\subseteq P_{N_2}, \label{eq:IW-denom-nested}\\
\lcm P_{N} \leq 3^{N} \label{eq:IW-denom-lcm}.
\end{gather}

Furthermore, for every $p \in (1,\infty)$ such that $p \in 2\N$ or $p' \in 2\N$, there exists a constant $0<C_{p, \rho, d}<\infty$ such that, for every $N\in\N$, the following holds.

Let $0<\epsilon_N \le e^{-N^{\rho}}$, and let $\Theta : \R^{d} \to L(H_0,H_1)$ be a measurable function supported on $\epsilon_{N}\bfQ$, where $\bfQ=[-1/2, 1/2]^d$ is a unit cube, with values in the space $L(H_{0},H_{1})$ of bounded linear operators between separable Hilbert spaces $H_{0}$ and $H_{1}$.
Let $0 \leq \boldA_{p} \leq \infty$ denote the smallest constant such that, for every function $f\in L^2(\R^d;H_0)\cap L^{p}(\R^d;H_0)$, we have
\begin{align}
\label{eq:IW-Lp-hypothesis}
\norm[\big]{\FT^{-1}\big(\Theta\FT f\big)}_{L^{p}(\R^{d};H_1)}
\leq
\boldA_{p} \norm{f}_{L^{p}(\R^{d};H_0)}.
\end{align}
Then the multiplier
\begin{equation}
\label{eq:IW-mult}
\Delta_N(\xi)
:=\sum_{b \in\scrU_N}
\Theta(\xi - b),
\end{equation}
where $\scrU_{N} \subset \Q^{d} \cap [0,1)^{d}$ is the set of all reduced fractions with denominators in $P_{N}$, satisfies
\begin{align}
\label{eq:111}
\norm[\big]{ \FT^{-1}\big(\Delta_{N} \hat{f}\big)}_{\ell^p(\Z^{d};H_1)}
\le C_{p,\rho,d}
(\log N) \boldA_{p}
\norm{f}_{\ell^p(\Z^{d};H_0)}
\end{align}
for every $f\in \ell^p(\Z^d;H_0)$.
\end{theorem}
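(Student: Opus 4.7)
The strategy is to adapt the scalar Ionescu--Wainger argument of \cite{MR2188130} (and the streamlined version in \cite{MR3738256}) to the operator-valued setting, exploiting that both $H_0$ and $H_1$ are Hilbert spaces so that Plancherel and square-function techniques transfer without loss. First, I would construct $P_N$ by partitioning the primes into generations $\Pi_n := \Prime \cap (e^{\rho_n}, e^{\rho_{n+1}}]$ for a suitably sparse sequence $\rho_n$, and declaring $P_N$ to consist of products $\prod_n q_n$ where each $q_n$ is built from $\Pi_n$ subject to bookkeeping bounds that yield simultaneously \eqref{eq:IW-denom-bound}, \eqref{eq:IW-denom-nested}, and \eqref{eq:IW-denom-lcm}. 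A key elementary observation is that any two distinct elements of $\scrU_N$ differ by at least $(\lcm P_N)^{-1} \geq 3^{-N}$ in some coordinate, and hence the translates $b+\epsilon_N \bfQ$ with $b \in \scrU_N$ are pairwise disjoint on $\T^d$ because $\epsilon_N \leq e^{-N^\rho} < 3^{-N}$ for $N$ large (small $N$ being absorbed into the constant).

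The proof then splits into cases. For $p=2$, Plancherel for $H_1$-valued functions combined with disjoint supports immediately yields \eqref{eq:111} with constant $\boldA_2$ and no logarithmic loss. For $p=2m\in 2\N$, I would decompose $\scrU_N = \bigsqcup_{S} \scrU_N^{(S)}$ according to the generation structure of denominators, where $S$ ranges over $O(\log N)$ indexing sets. For each $S$ the corresponding partial multiplier factors as a product of a scalar-valued projector onto a structured rational-frequency set, whose $\ell^p$-norm is $O(1)$ (handled by $2m$-th power expansion and nested Plancherel as in the scalar case, which only uses the pointwise Hilbert norm), and a ``periodization'' of $\Theta$, which is amenable to the sampling principle for Banach-valued multipliers \cite[Proposition 4.7]{arxiv:1808.04592} applied to the couple $L^p(\R^d; H_i)$ and so contributes $\boldA_p$ per layer. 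Summing over the $O(\log N)$ layers produces the $(\log N)$ factor. The case $p' \in 2\N$ reduces to the previous by duality, using that $\Theta^* : L^{p'}(\R^d; H_1) \to L^{p'}(\R^d; H_0)$ has operator norm $\boldA_p$ and that $\scrU_N$ is invariant modulo $\Z^d$ under negation.

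The main obstacle is verifying that the Hilbert-valued sampling transfer is genuinely dimension-free in $H_0, H_1$: although \cite[Proposition 4.7]{arxiv:1808.04592} applies to any compatible Banach couple, one must check that the resulting constants depend only on $p$ and $d$. A secondary delicate point is the even-integer expansion step: since $\norm{\sum_b f_b}_{H_1}^{2m}$ cannot be expanded as a polynomial in the $f_b$ the way one does for complex scalars, one must interpret the expansion as a $2m$-linear form built from inner products $\innerp{f_b}{f_{b'}}_{H_1}$ and verify that the Ionescu--Wainger combinatorial orthogonality still applies. This works because in the Hilbert-valued setting the relevant $L^p$ computations reduce to integrals of products of such inner products, which behave orthogonally on the disjoint supports $b+\epsilon_N\bfQ$, exactly as pointwise products of complex numbers do in the scalar case.
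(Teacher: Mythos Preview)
Your high-level strategy matches the paper's---decompose $\scrU_N$ into $O(\log N)$ structured pieces, then for each piece combine an orthogonality/square-function argument with a sampling step that contributes $\bfA_p$. You also correctly identify the key Hilbert-space point: the $2m$-th power expansion must be read as a sum of products of inner products $\langle f_b, f_{b'}\rangle_{H_1}$, and one must check the Ionescu--Wainger combinatorics survive this. The paper handles exactly this via a ``uniqueness property'' and a bipartite matching lemma (Corollary~\ref{cor:up-est}, Lemma~\ref{lem:part}).

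However, your proposed factorization ``scalar projector times periodization of $\Theta$'' has a genuine gap, and relatedly your disjointness claim is false. You assert $\epsilon_N \le e^{-N^\rho} < 3^{-N}$; but $3^{-N} = e^{-N\ln 3}$, so this requires $N^\rho > N\ln 3$, which fails for all large $N$ once $\rho\le 1$---precisely the regime of interest. Consequently, if you try to periodize $\Theta$ directly over a common denominator $Q$ (the natural choice being $\lcm P_N \le 3^N$), the translates $\Theta(\cdot - b)$ for $b\in\scrQ(Q)$ overlap, and the sampling theorem of \cite{MR1888798} does not apply. The same obstruction blocks your ``Plancherel and disjoint supports'' argument for $p=2$.

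The paper circumvents this by a two-stage orthogonality reduction \emph{before} periodizing. Each of the $O(\log N)$ pieces has the product structure $\scrQ(Q_0)+\calR(S_1\dotsm S_k)$ with $Q_0\lesssim e^{O(N^{\rho/2})}$ and $S_j\subset\N_N$ consisting of coprime prime powers. One first expands the $2r$-th power and uses disjoint-Fourier-support orthogonality (detected by the uniqueness property) to replace the $\ell^2$-sum over the full product $S_1\dotsm S_k$ by a sum over individual factors $\sigma\in S_M$ for subsets $M\subseteq\N_k$, together with an outer $\ell^2$-square function in the complementary numerators. Only then is the relevant period $Q=Q_0\sigma\lesssim e^{O(N^{\rho/2})}\cdot N^{D}$, which \emph{is} small enough compared to $\epsilon_N^{-1}\ge e^{N^\rho}$ for the periodized multiplier $\sum_{b\in\scrQ(Q)}\Theta(\cdot-b)$ to have disjoint summands. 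At that point \cite[Corollary~2.1]{MR1888798} (not Proposition~4.7 of \cite{arxiv:1808.04592}) transfers the bound $\bfA_{2r}$, and a vector-valued Marcinkiewicz--Zygmund inequality handles the remaining square function in the complementary numerators. Your proposal collapses this reduction into a single factorization step, but without it the periods are too large by a factor of roughly $e^{N-N^\rho}$ and the argument does not close.
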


The hypothesis \eqref{eq:IW-Lp-hypothesis}, unlike the support hypothesis, is scale-invariant, in the sense that the constant $\bfA_{p}$ does not change when $\Theta$ is replaced by $\Theta(A\cdot)$ for any invertible linear transformation $A$.

If we assume that \eqref{eq:IW-Lp-hypothesis} holds for every $p \in (1,\infty)$ with some finite $\bfA_{p}$, then, by complex interpolation, also the multiplier \eqref{eq:IW-mult} is bounded $\ell^{p}(H_{0}) \to \ell^{p}(H_{1})$ for every $p\in (1,\infty)$.
It would be interesting to know whether the operator norm of \eqref{eq:IW-mult} is bounded by a constant times $\bfA_{p}$ for all $p\in (1,\infty)$.

Replacing $\Theta$ by the operator matrix
\[
\begin{pmatrix}
0 & 0\\ \Theta & 0
\end{pmatrix}
\]
on $H=H_0\oplus H_1$, we may assume that $H_0=H_1=H$.
Throughout this section we assume that $f:\Z^d\rightarrow H$ has finite support.

\subsection{Construction of the sets $P_{N}$}
The strength of Theorem~\ref{thm:IW-mult} increases as $\rho$ decreases, so it suffices to consider $\rho < 2$. 
Fix $\rho \in (0,2)$ and let $D=D_{\rho}:=\ceil{2/\rho}$. Let $\Prime$ denote the set of all prime numbers.
For every $N\in\N$, define $\Prime_N:=\Prime\cap (N^{\rho/2}, N]$ and
\[
Q_0:=\lcm \Set{ n\in\N_{N} \given n \text{ is not divisible by any element of } \Prime_{N}}.
\]

We will use the sumset and product set notation
\begin{align*}
S_{1}+\dotsb+S_{k} &= \Set{ s_{1}+\dotsb+s_{k} \given \ s_{1}\in S_{1},\dotsc,s_{k}\in S_{k}},\\
S_{1}\cdots S_{k} &= \Set{ s_{1}\cdots s_{k} \given \ s_{1}\in S_{1},\dotsc,s_{k}\in S_{k}},
\end{align*}
with the convention that the sumset equals $\Set{0}$ and the product set equals $\Set{1}$ in the case $k=0$.
For small $N$ (depending only on $\rho$, more precisely those $N$ for which \eqref{eq:116} does not hold yet), we set $P_{N} := \N_{N}$, while for larger $N$ we set
\[
P_N := \Set{q\in\N \given  q \text{ divides } Q_0} \cdot \Set{ n\in\N_{N} \given \text{all prime factors of } n \text{ are in } \Prime_{N}}.
\]
It is easy to verify \eqref{eq:IW-denom-nested} for these sets.

In order to verify \eqref{eq:IW-denom-bound} and \eqref{eq:IW-denom-lcm}, we recall an estimate for the least common multiple of the first $N$ numbers based on the prime number theorem.
\begin{lemma}[{\cite{MR0313179}}]
\label{lem:lcm}
For every $N\in\N$, we have $\lcm (1,\dotsc,N) \leq 3^{N}$.
\end{lemma}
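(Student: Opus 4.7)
The estimate $\lcm(1,\dotsc,N) \leq 3^{N}$ is equivalent, after taking logarithms, to the Chebyshev-type inequality $\psi(N) \leq N\log 3$, where $\psi(N) := \log\lcm(1,\dotsc,N) = \sum_{p^{k} \leq N}\log p$ is the second Chebyshev function. The plan is to refine the classical Chebyshev--Erd\H{o}s argument so as to recover the sharp constant $\log 3$, rather than the weaker $\log 4$ coming from the central binomial coefficient.

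The first reduction I would make is from $\psi$ to the first Chebyshev function $\theta(N) := \sum_{p \leq N}\log p$, using the decomposition $\psi(N) - \theta(N) = \sum_{k \geq 2}\theta(N^{1/k}) = O(\sqrt{N}\log N)$. It then suffices to establish $\theta(N) \leq N\log 3 - C\sqrt{N}\log N$ for some $C>0$ and all sufficiently large $N$, checking the finitely many remaining small values of $N$ by direct computation.

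The main ingredient is a divisibility argument for multinomial coefficients. Every prime $p \in (3n/2,3n]$ divides $(3n)!$ exactly once and does not divide $(n!)^{3}$, so $\prod_{3n/2 < p \leq 3n} p$ divides $\binom{3n}{n,n,n} \leq 27^{n}$, which in logarithmic form reads $\theta(3n) - \theta(3n/2) \leq 3n\log 3$. The analogous relation $\prod_{n < p \leq 2n} p \mid \binom{2n}{n} \leq 4^{n}$ gives $\theta(2n) - \theta(n) \leq n\log 4$.

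I expect the principal obstacle to be the following: a naive dyadic iteration of either inequality loses a constant factor and yields only $\theta(N) \leq (\log 4)N$ or $\theta(N) \leq 2N\log 3$, both too weak. To reach the sharp constant $\log 3$ one must combine multinomial-like divisibility relations at a single scale so that all primes up to $N$ are controlled simultaneously with the correct exponent, avoiding the lossy telescoping between dyadic scales; this is the combinatorial content of Hanson's argument in~\cite{MR0313179}. Once $\theta(N) \leq N\log 3$ is established for large $N$, the first-step reduction delivers $\psi(N) \leq N\log 3 + O(\sqrt{N}\log N)$, and the residual small-$N$ cases are handled by inspection of $\lcm(1,\dotsc,N)$.
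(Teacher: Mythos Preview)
The paper does not give a proof of this lemma; it is simply quoted from Hanson~\cite{MR0313179} and used as a black box. Immediately after the statement the authors remark that the crude bound $\lcm(1,\dotsc,N)\le N!$ together with Stirling's formula would already suffice for their application, so no argument is supplied or needed within the paper.

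As for your plan itself, it correctly locates the difficulty but does not resolve it, and it contains an internal inconsistency. You first note (correctly) that the reduction $\psi-\theta=O(\sqrt{N}\log N)$ forces the strictly stronger bound $\theta(N)\le N\log 3 - C\sqrt{N}\log N$, yet in your last paragraph you downgrade this to $\theta(N)\le N\log 3$, which would only give $\psi(N)\le N\log 3 + O(\sqrt{N}\log N)$ and hence not the claimed inequality for all $N$. More to the point, after observing that iterating the binomial or trinomial divisibility relations loses a factor of~$2$, you defer the crux to ``the combinatorial content of Hanson's argument'' without supplying it. Hanson's actual proof is in fact quite different from your outline: he does not pass through the $\theta/\psi$ reduction at all, but works directly with the multinomial-type integer $n!\big/\prod_i \lfloor n/a_i\rfloor!$ for the Sylvester sequence $a_1=2,\ a_2=3,\ a_3=7,\ a_4=43,\dotsc$ (chosen so that $\sum_i 1/a_i<1$), shows that $\lcm(1,\dotsc,n)$ divides this integer, and bounds it by essentially $(\prod_i a_i^{1/a_i})^n$ with $\prod_i a_i^{1/a_i}\approx 2.95<3$.
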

For the purposes of the present article, a weaker estimate using $\lcm (1,\dotsc,N) \leq N!$ and Stirling's formula would also suffice, but Lemma~\ref{lem:lcm} has a more appealing form.

By Lemma~\ref{lem:lcm}, we have
\begin{align}
  \label{eq:9}
Q_{0} \leq \lcm (1,\dotsc,\floor{N^{\rho/2}})^{2D} \leq 3^{2D N^{\rho/2}},
\end{align}
so
\begin{align}
\label{eq:116}
Q_0N \le 3^{2D N^{\rho/2}} N \le e^{N^{\rho}}
\end{align}
for sufficiently large $N$ (depending on $\rho$), and we obtain \eqref{eq:IW-denom-bound}.
Finally, $\lcm P_{N} = \lcm(1,\dotsc,N)$, and \eqref{eq:IW-denom-lcm} follows from Lemma~\ref{lem:lcm}.

For $q\in\N$, the set of all fractions with denominator $q$ will be denoted by
\[
\scrQ(q) := \N_{q}^{d}/q
\]
($\scrQ$ for ``quotients'').
The set of coprime elements in the fundamental domain of $(\Z/q\Z)^{d}$ is denoted by
\begin{align}
\label{eq:99}
A_q
:=
\Set[\big]{ a\in\N_q^d \given \gcd(q, a_{1}, \dotsc, a_{d}) = 1 }.
\end{align}
The set of reduced fractions with denominator $q$ is given by
\[
\calR(q) := A_{q}/q
\]
($\calR$ for ``reduced''),
and for $S\subseteq\N$ we write
\[
\calR(S) := \bigcup_{q\in S}\calR(q).
\]
Finally, for each $N\in\N$, we will consider
\begin{align}
\label{eq:156}
\scrU_N:=\calR(P_N).
\end{align}
Note for future reference that $N_1\le N_2$ implies $\scrU_{N_1}\subseteq \scrU_{N_2}$ and
\begin{equation}
\label{eq:UN-size}
\card{\scrU_N} \lesssim e^{(d+1)N^{\rho}}.
\end{equation}

Since the estimate \eqref{eq:111} clearly holds with $\log N$ replaced by $\abs{\scrU_{N}}$, we may assume that $N$ is larger than some constant depending on $p$ and $\rho$.

\subsection{Partitioning denominators into product sets}
The main idea of the proof of Theorem~\ref{thm:IW-mult} is to split $P_N$ into $O_{\rho}(\log N)$ disjoint subsets and show that the operator norm of the multiplier \eqref{eq:IW-mult} with summation restricted to the reduced fractions whose denominators correspond to a fixed set of the aforementioned partition of $P_N$  has a  bound independent of $N$.

For this purpose, notice that
\[
\Set{ n\in\N_{N} \given \text{all prime factors of } n \text{ are in } \Prime_{N}}
\subseteq
\Pi(\Prime_{N}),
\]
where, for a subset $V\subseteq\Prime$, we define
\[
\Pi(V):=\bigcup_{k=0}^{D} \Pi_k(V)
\]
with $\Pi_0(V)=\Set{1}$ and, for $k\in\N$,
\[
\Pi_k(V):=\Set{p_{1}^{\gamma_{1}}\dotsm p_{k}^{\gamma_{k}} \given \ \gamma_{1},\dotsc,\gamma_{k}\in\N_D\ \text{and}\ p_{1},\dotsc,p_{k}\in V\ \text{are distinct} }
\]
is the set of numbers with exactly $k$ distinct prime factors in $V$ at powers from $\N_D$.

We partition the set of denominators $\Pi(\Prime_{N})$ into disjoint subsets with property $\calO$ introduced by Ionescu and Wainger.
\begin{definition}[{\cite{MR2188130}}]
\label{def:propO}
A subset $\Lambda\subseteq\Pi(V)$ has \emph{property $\calO$} if there exists $k\in\Set{0,\dotsc,D}$ and pairwise disjoint sets $S_1,\dotsc, S_k \subseteq \Pi_{1}(V)$ such that
\begin{enumerate}
\item $\Lambda \subseteq S_{1}\dotsm S_{k}$ and
\item the elements of $S_{1} \cup \dotsb \cup S_{k}$ are pairwise coprime.
\end{enumerate}
\end{definition}
Notice that each $S_{j}$ above consists of pure powers of primes in $V$.

The next result was proved in \cite[Lemma 3.1]{MR2188130} with the bound $O_{\rho}((\log N)^{D-1})$ instead of $O_{\rho}(\log N)$.
\begin{lemma}[{\cite[Lemma 5.1]{arXiv:1512.07518}}]
\label{lem:partition-O}
For every $\rho>0$ and every $N\in\N$, the set $\Pi(\Prime_N)$ can be partitioned into $O_{\rho}(\log N)$ sets with property $\calO$.
\end{lemma}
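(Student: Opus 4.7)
The plan is to decompose $\Pi(\Prime_{N}) = \bigsqcup_{k=0}^{D}\Pi_{k}(\Prime_{N})$ and partition each piece separately, so that summing over $k \in \{0, 1, \dotsc, D\}$ (recall $D = D_{\rho} = O_{\rho}(1)$) yields a total of $O_{\rho}(\log N)$ buckets. The case $k = 0$ is trivial since $\Pi_{0}(\Prime_{N}) = \{1\}$ has property $\calO$ with no $S_{i}$'s; for $k \geq 1$, the idea is to use a small perfect hash family to reduce the problem to a bounded number of combinatorial rectangles of pure prime powers.

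Fix $k \in \{1, \dotsc, D\}$. By the probabilistic method---draw independent uniformly random colorings $\Prime_{N} \to \{1, \dotsc, k\}$, observe that any fixed $k$-subset is ``rainbow'' with probability $k!/k^{k} > 0$, and take a union bound over the $\binom{|\Prime_{N}|}{k}$ subsets---one obtains a family $\calF_{k}$ of functions $f \colon \Prime_{N} \to \{1, \dotsc, k\}$ with $|\calF_{k}| \leq C_{k}\log N$ such that for every $k$-subset $T \subseteq \Prime_{N}$ some $f \in \calF_{k}$ restricts to a bijection $T \to \{1, \dotsc, k\}$. For each such $f$ put $A_{i}^{f} := f^{-1}(i)$; these partition $\Prime_{N}$ into $k$ labeled parts.

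For each $f \in \calF_{k}$ and each exponent vector $\delta = (\delta_{1}, \dotsc, \delta_{k}) \in \{1, \dotsc, D\}^{k}$, define
\[
S_{i}^{f,\delta} := \Set{p^{\delta_{i}} \given p \in A_{i}^{f}} \subseteq \Pi_{1}(\Prime_{N}),
\qquad
\Lambda^{f,\delta} := S_{1}^{f,\delta}\cdots S_{k}^{f,\delta}.
\]
The $S_{i}^{f,\delta}$ are pairwise disjoint (as the $A_{i}^{f}$ are), and the elements of $\bigcup_{i=1}^{k} S_{i}^{f,\delta}$ are pure prime powers with pairwise distinct prime bases, hence pairwise coprime; consequently $\Lambda^{f,\delta}$ has property $\calO$. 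The family $\{\Lambda^{f,\delta}\}$ covers $\Pi_{k}(\Prime_{N})$: given $n = p_{1}^{\alpha_{1}}\cdots p_{k}^{\alpha_{k}} \in \Pi_{k}(\Prime_{N})$, pick $f \in \calF_{k}$ splitting $\{p_{1}, \dotsc, p_{k}\}$, let $\sigma$ be the permutation with $p_{j} \in A_{\sigma(j)}^{f}$, and set $\delta_{\sigma(j)} := \alpha_{j}$. Refining this cover into an honest partition is harmless, since property $\calO$ is inherited by subsets.

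Summing, the total number of pieces is $\sum_{k=0}^{D} |\calF_{k}| \cdot D^{k} = O_{\rho}(\log N)$. The main obstacle is the construction of $\calF_{k}$ of logarithmic-in-$N$ size---this is precisely where the improvement over the earlier bound $O_{\rho}((\log N)^{D-1})$ of \cite{MR2188130} lies; once that ingredient is in place, the remainder is bookkeeping about the exponent vectors and product sets.
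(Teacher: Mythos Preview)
Your proof is correct and follows essentially the same approach as the paper: decompose $\Pi(\Prime_{N})$ into the layers $\Pi_{k}(\Prime_{N})$, invoke a probabilistic perfect-hashing lemma to produce $O_{k}(\log\abs{\Prime_{N}})$ colorings that split every $k$-set, and then index the property-$\calO$ pieces by pairs (coloring, exponent vector in $\N_{D}^{k}$), refining the resulting cover to a partition via the hereditary nature of property~$\calO$. The paper isolates the probabilistic step as a separate lemma (Lemma~\ref{lem:31}) but the argument is otherwise identical to yours.
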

The proof is based on the following probabilistic argument.

\begin{lemma}[{\cite[Lemma 5.2]{arXiv:1512.07518}}]
\label{lem:31}
For every $k\in\N$ and every finite set $V$, there exists a natural number $r\lesssim_{k} \log \card{V}$ and surjective functions
\[
f_{1},\dotsc,f_{r} : V \to \N_{k}
\]
such that for every subset $E\subseteq V$ with $\card{E} \geq k$ there exists $i\in\N_r$ with $\card{f_{i}(E)}=k$.
\end{lemma}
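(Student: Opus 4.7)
The plan is to use the probabilistic method: select $f_{1},\dotsc,f_{r}$ independently and uniformly at random from the set of all maps $V \to \N_{k}$, with $r$ of order $\log\card{V}$, and show by the union bound that a family with the desired surjectivity property exists.

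The key steps, in order, would be the following. First, I would reduce to subsets $E \subseteq V$ of size exactly $k$, using the monotonicity that if $f_{i}|_{E}$ is a bijection onto $\N_{k}$, then $f_{i}|_{E'}$ remains surjective for every $E' \supseteq E$. Second, for a fixed $E$ with $\card{E}=k$ and a uniformly random $f:V\to\N_{k}$, the probability that $f|_{E}$ is a bijection equals the positive constant $p_{k} := k!/k^{k}$ depending only on $k$. Third, I would apply the union bound over the at most $\card{V}^{k}$ subsets of size $k$, using independence of the $f_{i}$, to bound the failure probability by $\card{V}^{k}(1-p_{k})^{r}$. Fourth, choosing $r = \lceil C_{k}\log\card{V}\rceil$ with $C_{k}$ large enough (in terms of $p_{k}$) drives this bound strictly below $1$, yielding existence of a deterministic family with the required property.

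Ensuring each individual $f_{i}$ is itself surjective onto $\N_{k}$ I would handle by absorbing it into the same union bound: a uniformly random $f$ fails to be surjective with probability at most $k(1-1/k)^{\card{V}}$, which is negligible once $\card{V}$ is large compared to $k$. The remaining small-$\card{V}$ regime (where $\card{V}$ is bounded in terms of $k$) is trivial: any fixed deterministic family of $r$ surjections suffices, with $r \lesssim_{k} 1 \lesssim_{k} \log\card{V}$ automatic after adjusting the implicit constants.

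The main obstacle is organizational rather than conceptual: the heart of the argument is the elementary observation that a uniformly random $k$-coloring of a $k$-element set is a permutation with positive probability $p_{k} > 0$, and combining this with independence and the union bound produces the logarithmic bound on $r$. Care is needed only in balancing constants so that $r\lesssim_{k}\log\card{V}$ holds uniformly for $\card{V}\geq k$, and in verifying that the surjectivity requirement on each $f_{i}$ does not change the order of $r$.
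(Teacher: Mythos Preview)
Your proposal is correct and follows essentially the same probabilistic/counting argument as the paper: reduce to $\card{E}=k$, note that a uniformly random $f:V\to\N_k$ restricts to a bijection on $E$ with probability $k!/k^k$, and apply the union bound over $\binom{\card{V}}{k}\le\card{V}^k$ subsets to see that $r\simeq_k\log\card{V}$ independent trials succeed with positive probability.

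The only minor difference is in how surjectivity of each $f_i$ is enforced. The paper observes that one may simply drop any non-surjective $f_i$ from the list: if $\card{f_i(E)}=k$ for some $E$, then $f_i$ is already surjective, so the non-surjective functions never witness anything and removing them only shortens the list. This is slightly cleaner than absorbing the non-surjectivity event into the union bound and treating small $\card{V}$ separately, but both approaches work.
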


\begin{proof}
We will assume $2\leq k\leq \card{V}$, since other cases are easy.
Moreover, it suffices to construct the functions $f_{i}$ without the restriction of them being surjective, since the non-surjective functions can be dropped.
It also suffices to consider $E\subseteq V$ with $\card{E}=k$.
Denote the set of such subsets by $\binom{V}{k}$.
Let $k^{V}$ denote the set of all functions $f: V\to\N_k$.
Note that, for every set $E \in\binom{V}{k}$, we have
\[
\card{\Set{f\in k^{V} \given \card{f(E)}=k}}
=
\frac{k!}{k^k}\card{k^{V}}.
\]
Let $r=\ceil[\big]{\frac{k^{k+1}}{k!}\ln\abs{V}}$, and suppose for a contradiction that the set
\[
\Set{(f_1,\dotsc, f_r)\in (k^{V})^r \given \forall_{E \in\binom{V}{k}}\ \exists_{1\le m\le r} \ \card{f_m(E)}=k}
\]
is empty.
Then
\begin{align*}
\card{k^{V}}^r
&=
\card{\Set{(f_1,\dotsc, f_r)\in (k^{V})^r \given \exists_{E\in\binom{V}{k}} \forall_{1\le m\le r} \ \card{f_m(E)}<k}}\\
&\leq
\sum_{E \in\binom{V}{k}} \card{\Set{(f_1,\dotsc, f_r)\in (k^{V})^r\given
\forall_{1\le m\le r}\ \card{f_m(E)}<k}}\\
&=
\sum_{E \in\binom{V}{k}}
\card{\Set{f\in k^{V} \given  \card{f(E)}<k}}^r \\
&=
\sum_{E \in\binom{V}{k}}
\bigg(1-\frac{k!}{k^k}\bigg)^r\card{k^{V}}^r\\
&=
\binom{\card{V}}{k}
\bigg(1-\frac{k!}{k^k}\bigg)^r\card{k^{V}}^r.
\end{align*}
Dividing both sides by $\card{k^{V}}^{r}$, we get the contradiction
\[
1
\leq
\binom{\card{V}}{k}\big(1-{k!}{k^{-k}}\big)^r
<
\card{V}^{k} e^{-r\frac{k!}{k^k}}
=
e^{k\ln\abs{V}-r\frac{k!}{k^k}}
\leq
1.
\qedhere
\]
\end{proof}

\begin{proof}[Proof of Lemma~\ref{lem:partition-O}]
Since each subset of set with property $\calO$ also has property $\calO$, it suffices to show that, for every $V\subseteq \Prime$ and $k\in\N_{D}$, the set $\Pi_{k}(V)$ can be written as a (not necessarily disjoint) union of $O_{\rho}(\log \card{V})$ sets with property $\calO$.
Let $f_{1},\dotsc,f_{r} : V \to \N_{k}$ be functions given by Lemma~\ref{lem:31}.
Then the claim is witnessed by the decomposition
\[
\Pi_k(V)
=
\bigcup_{\gamma\in\N_D^k} \bigcup_{i=1}^{r} \Set{p_{1}^{\gamma_1}\dotsm p_{k}^{\gamma_k} \given  f_{i}(p_{j})=j \text{ for each } 1\leq j\leq k}.
\qedhere
\]
\end{proof}
By the Chinese remainder theorem, we have $\calR(\Lambda\cdot \Lambda') = \calR(\Lambda) + \calR(\Lambda')\mod \Z^d$, whenever each element of $\Lambda$ is coprime to each element of $\Lambda'$.
By Lemma~\ref{lem:partition-O}, the set $\scrU_{N}$ can be partitioned into $O_{\rho}(\log N)$ sets of the form
\[
\scrQ(Q_{0}) + \calR(\Lambda) \mod \Z^d,
\]
where each
\begin{equation}
\label{eq:300}
\Lambda \subseteq S_{1}\dotsm S_{k}
\end{equation}
is a set with property $\calO$ as in Definition~\ref{def:propO} with $S_{1},\dots,S_{k}\subseteq \Pi_{1}(\Prime_N) \cap \N_{N}$.
Therefore,
\begin{align*}
\scrQ(Q_{0}) + \calR(\Lambda)
&=
\scrQ(Q_{0}) + \bigcup_{\substack{q_{1}\in S_{1},\dotsc,q_{k}\in S_{k} :\\ q_{1}\dotsm q_{k}\in\Lambda}} \calR(q_{1}\dotsm q_{k})\\
&=
\scrQ(Q_{0}) + \bigcup_{\substack{q_{1}\in S_{1},\dotsc,q_{k}\in S_{k} :\\ q_{1}\dotsm q_{k}\in\Lambda}} (A_{q_{1}}/q_{1} + \dotsb + A_{q_{k}}/q_{k})\mod \Z^d.
\end{align*}
It follows that $\Delta_{N}(\xi)$ can be written as the sum of $O_{\rho}(\log N)$ multipliers of the form
\[
\sum_{q_1\in S_{1}}\sum_{a_{1}\in A_{q_{1}}}\dotsi\sum_{q_k\in S_{k}}\sum_{a_{k}\in A_{q_{k}}}
\underbrace{\ind{\Lambda}(q_{1}\dotsm q_{k})\sum_{b\in\scrQ(Q_0)}
\Theta\Big(\xi-b-\sum_{j=1}^ka_{j}/q_{j}\Big)}_{=:m^{\Lambda}_{a_{1}/q_{1}+\dotsb+a_{k}/q_{k} }(\xi)}.
\]
It suffices to obtain $L^{p}$ bounds, which do not depend on $N$, for these multipliers.
We will do so for even integer exponents $p=2r$ with $r\in\N$.
The case $p' \in 2\N$ can be reduced to this case by duality, considering the adjoint multiplier $\Theta^{*}$.

From now on, for brevity, we will use the notation
\begin{equation}
\label{eq:def-fu}
f_u(x)
:=
\FT^{-1}\big(m_u^{\Lambda}\hat{f}\big)(x)
\quad \text{for}\quad
u\in \calR(S_{1}\dotsm S_{k}).
\end{equation}

We restate our claim with the newly introduced notation.

\begin{theorem}
\label{thm:IW:fu}
For every $\rho>0$ and $r\in\N$, there exist constants  $0<c_{r, \rho}, C_{r, \rho, d} < \infty$ such that, for every $N \ge c_{r, \rho}$ and every set $\Lambda \subseteq S_{1}\dotsm S_{k} \subseteq \Pi(\Prime_{N})$ as in \eqref{eq:300}, we have
\begin{align}
\label{eq:677}
\norm[\Big]{\sum_{u\in \calR(S_{1}\dotsm S_{k})}f_u}_{\ell^{2r}(\Z^{d};H)}
\le C_{r, \rho, d}\boldA_{2r}\norm{f}_{\ell^{2r}(\Z^d;H)}
\end{align}
for every
$f\in \ell^{2r}(\Z^d;H)$.
\end{theorem}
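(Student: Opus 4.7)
The plan is to prove Theorem~\ref{thm:IW:fu} by induction on $k \in \{0, 1, \dotsc, D\}$, the depth of the product structure $S_1 \dotsm S_k$. At each depth $k$, the operator $T_k f := \sum_{u \in \calR(S_1 \dotsm S_k)} f_u$ will be bounded on $\ell^{2r}(\Z^d; H)$ by a constant $C_k \boldA_{2r}$, with $C_k$ growing by only an absolute factor (depending on $r$ and $\rho$) per step; since $k \le D = D_\rho$, the total constant remains bounded by $C_{r,\rho,d}$.

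For the base case $k = 0$, the only element of $\calR(S_1 \dotsm S_k)$ is $u = 0$, and the multiplier
\[
m_0^\Lambda(\xi) = \sum_{b \in \scrQ(Q_0)} \Theta(\xi - b)
\]
is $(1/Q_0)$-periodic in each coordinate. By \eqref{eq:9} the hypothesis $\epsilon_N \le e^{-N^\rho}$ implies $\epsilon_N \ll 1/Q_0$ for $N$ sufficiently large, so the support of $\Theta(\cdot - b)$ fits inside a single fundamental domain for $\scrQ(Q_0)$. An operator-valued form of the Magyar--Stein--Wainger sampling principle, derivable from the real-interpolation sampling principle \cite[Proposition~4.7]{arxiv:1808.04592} specialized to $L^{2r}(\R^d; H)$, then transfers the continuous hypothesis \eqref{eq:IW-Lp-hypothesis} to the discrete $\ell^{2r}(\Z^d; H)$ bound $\lesssim \boldA_{2r}$ for $m_0^\Lambda$.

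For the inductive step, assuming the bound at depth $k-1$, I would exploit that $p = 2r$ is an even integer via the Hilbert-space tensor-power identity $\|v\|_H^{2r} = \|v^{\otimes r}\|_{H^{\otimes r}}^2$, which yields
\[
\norm{T_k f}_{\ell^{2r}(\Z^d; H)}^{2r}
=
\norm[\Big]{\sum_{(u_1, \dotsc, u_r)} f_{u_1} \otimes \dotsb \otimes f_{u_r}}_{\ell^2(\Z^d; H^{\otimes r})}^2,
\]
where the sum runs over $r$-tuples from $\calR(S_1 \dotsm S_k)$. By Parseval on $\Z^d$, the right-hand side equals the squared $L^2(\T^d; H^{\otimes r})$ norm of a sum of $r$-fold torus convolutions $\hat f_{u_1} * \dotsb * \hat f_{u_r}$, each localized in an $r\epsilon_N$-neighborhood of the image of $u_1 + \dotsb + u_r$ on $\T^d$. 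The structural condition from property $\calO$, namely that $S_1, \dotsc, S_k$ are pairwise disjoint families of coprime prime powers, forces tuples producing the same image on the torus to be layer-wise permutations of each other; grouping into permutation orbits of size at most $r!$ yields near-orthogonality of the corresponding convolutions. The inductive hypothesis at depth $k-1$ is then applied via the modulation identity $f_{v + a_k/q_k}(x) = e(a_k/q_k \cdot x) \cdot T_{k-1}[e(-a_k/q_k \cdot (\cdot)) f](x)$, which reduces each shifted depth-$k$ multiplier to the depth-$(k-1)$ operator and closes the induction with a bounded constant loss.

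The main obstacle is the combinatorial injectivity up to permutation of the sum map $(u_1, \dotsc, u_r) \mapsto \sum_j u_j$ on the torus, when restricted to tuples of reduced fractions with coprime prime-power denominators; this is the cornerstone of the Ionescu--Wainger method and relies essentially on the arithmetic structure encoded by property $\calO$. A secondary challenge is the Hilbert-space adaptation: the tensor-power Parseval step and the inner product on $H^{\otimes r}$ must be handled compatibly with the scalar-valued combinatorial argument of \cite{MR2188130, arXiv:1512.07518}, and the operator-valued sampling principle in the base case must produce a constant depending linearly on $\boldA_{2r}$ rather than picking up a logarithmic loss. Once both ingredients are assembled, iterating through at most $D$ inductive steps yields \eqref{eq:677} with constant $C_{r,\rho,d}\boldA_{2r}$.
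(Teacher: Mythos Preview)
Your tensor-power/Parseval starting point is sound and is essentially how the paper begins as well (expanding $\|\cdot\|_{H}^{2r}$ as an $r$-fold product of $H$-inner products). The genuine gap is the combinatorial claim that ``tuples producing the same image on the torus are layer-wise permutations of each other.'' This is false already for $k=1$, $d=1$, $r=2$: with $S_{1}=\{5\}$ one has $1/5+2/5 \equiv 4/5+4/5 \pmod 1$, yet $(1/5,2/5)$ and $(4/5,4/5)$ are not permutations. So the convolutions $\hat f_{u_{1}}*\dotsb*\hat f_{u_{r}}$ are \emph{not} nearly orthogonal across permutation orbits, and your Parseval step does not yield the diagonal square function you need. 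Consequently the modulation-identity reduction to depth $k-1$ has nothing to act on, and the induction does not close.

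What the paper actually exploits is different and more delicate. It never argues injectivity up to permutation; instead it uses the \emph{uniqueness property} (Definition~\ref{def:propU}): in the expansion of $\prod_{i=1}^{r}\langle\cdot,\cdot\rangle_{H}$, any $2r$-tuple of denominators (or, later, of numerators) in which some entry occurs exactly once produces a term whose Fourier support misses the origin and hence integrates to zero. The tuples \emph{without} uniqueness are controlled by Corollary~\ref{cor:up-est}, which rests on a Hall-marriage pairing (Lemma~\ref{lem:part}). This is applied iteratively, first to peel off denominators one $S_{j}$ at a time (\eqref{eq:128}--\eqref{eq:158}), then---after a diagonal/off-diagonal split via Lemma~\ref{lem:l1-lr}---to peel off numerators (\eqref{eq:138}--\eqref{eq:162}). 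The multiplier $\Theta$ is then removed by vector-valued Marcinkiewicz--Zygmund (Theorem~\ref{thm:MZ}) combined with the sampling principle of \cite[Corollary~2.1]{MR1888798} for the periodic multiplier $\sum_{b\in\scrQ(Q)}\Theta(\cdot-b)$, and the residual square function with the smooth cutoff $\zeta$ is handled by interpolation between $\ell^{2}$ (Plancherel) and $\ell^{\infty}$ (Lemma~\ref{lem:sq-fct-embed}). There is no induction on $k$; the two peeling arguments each have $k\le D$ steps, but the mechanism at each step is Corollary~\ref{cor:up-est}, not a reduction to a depth-$(k-1)$ operator.
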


In the remaining part of Section~\ref{sec:iw} we prove Theorem~\ref{thm:IW:fu}.

\subsection{Uniqueness property and orthogonality}
\label{sec:IW:red:comb}
\begin{definition}[{\cite[Section 2]{MR2188130}}]
\label{def:propU}
A finite sequence $(x_1, x_2,\dotsc, x_m)$ has the \emph{uniqueness property} if there is $k\in\N_m$ such that $x_l\not=x_k$ for every $l\in\N_{m}\setminus\Set{k}$.
\end{definition}
We will need the following combinatorial fact that reduces to a particularly easy special case of Hall's marriage theorem.

\begin{lemma}
\label{lem:part}
Let $r\in\N$, and let $(q_{1}(0), q_{1}(1), \ldots, q_{r}(0), q_{r}(1))$ be a sequence which does not have the uniqueness property.
Then there exists a function $\kappa : \N_{r} \to \Set{0,1}$ such that
\begin{equation}
\label{eq:cor:part}
\Set{ q_j(\kappa(j)) \given  j\in\N_{r}}
=
\Set{ q_j(1-\kappa(j)) \given  j\in\N_{r}}
=
\Set{ q_j(i) \given  j\in\N_{r}, i\in\Set{0,1}}.
\end{equation}
\end{lemma}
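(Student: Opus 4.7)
The plan is to recast \eqref{eq:cor:part} as an edge-orientation problem and apply Hall's marriage theorem.

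Model the sequence by a multigraph $G$ on the vertex set $V:=\Set{q_j(i): j\in\N_{r},\ i\in\Set{0,1}}$, with one edge $e_j=\{q_j(0),q_j(1)\}$ per $j\in\N_{r}$ (a loop when $q_j(0)=q_j(1)$). Choosing $\kappa(j)$ amounts to orienting $e_j$ so that $q_j(\kappa(j))$ is its head. The identity \eqref{eq:cor:part} is equivalent to saying that every $v\in V$ occurs as the head of some oriented edge and as the tail of some oriented edge; the no-uniqueness hypothesis says precisely that every vertex of $G$ has degree at least $2$, where a loop contributes $2$ to the degree of its vertex.

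Vertices carrying a loop are automatically covered on both sides by any orientation of the loop, so it suffices to orient the non-loop edge set $N\subseteq\N_{r}$ such that every $v\in\widetilde{V}:=V\setminus V_\ell$ (where $V_\ell$ denotes the loop-vertices) is the head and the tail of some oriented edge in $N$. Each such $v$ has non-loop degree $d_N(v)\geq 2$.

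Apply Hall's theorem to the bipartite graph with left part $\widetilde{V}\times\Set{L,R}$, right part $N$ of capacity $2$ per right vertex, and edges $(v,*)\sim e$ whenever $v$ is an endpoint of $e$. The Hall condition reduces to $|E_N(U)|\geq|U|$ for every $U\subseteq\widetilde{V}$ (where $E_N(U)$ denotes the non-loop edges incident to $U$), and follows from the double count
\[2|U|\ \leq\ \sum_{v\in U} d_N(v)\ \leq\ 2|E_N(U)|.\]
A saturating matching exists, and a routine augmenting-path swap ensures that whenever an $e\in N$ is matched to two left vertices, they form a pair $\{(u,L),(w,R)\}$ with $\{u,w\}$ the two endpoints of $e$. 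Orient each such $e_j$ by setting $\kappa(j)$ so that $q_j(\kappa(j))=u$; orient any unused non-loop and every loop arbitrarily. The resulting $\kappa$ witnesses \eqref{eq:cor:part}. The main subtlety is arranging the matching to be orientation-compatible (so that no edge is forced to have two simultaneous heads or two simultaneous tails), which is precisely the "particularly easy special case of Hall's marriage theorem" referenced in the paper.
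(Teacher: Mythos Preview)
Your reformulation as an edge-orientation problem is correct, as is the separation of loop-vertices and the verification of the Hall condition $|E_N(U)|\ge |U|$ via the degree count. The gap is in the step you call a ``routine augmenting-path swap.'' Hall's theorem hands you a saturating assignment $\mu:\widetilde V\times\{L,R\}\to N$ with each $e\in N$ used at most twice, but nothing prevents an edge $e=\{u,w\}$ from receiving the pair $\{(u,L),(w,L)\}$ (both endpoints demand $e$ as head), or $\{(u,R),(w,R)\}$, or $\{(u,L),(u,R)\}$ (the same vertex demands $e$ as both head and tail). None of these yields a consistent orientation of $e$, and you give no argument that such conflicts can always be swapped away. ``Augmenting path'' does not apply in any standard way here: your matching is already saturating, and what you need is a structural rearrangement whose existence is essentially the whole content of the lemma. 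Your last sentence acknowledges this subtlety but does not resolve it; the paper's remark about Hall's theorem refers to the statement, not to the proof it actually gives.

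The paper avoids the consistency issue entirely by first reducing (via introduction of new symbols) to the case where every value has multiplicity exactly~$2$. Then the bipartite incidence graph between indices $\N_r$ and values $Z$ is $2$-regular, hence a disjoint union of even cycles, and an alternating $2$-colouring of each cycle defines $\kappa$ directly. If you prefer to rescue the orientation viewpoint without that reduction, here is a clean fix replacing the swap: on the loopless multigraph $(V,N)$, within each component pair up the odd-degree vertices by auxiliary edges (there are evenly many by handshaking), orient the resulting even-degree graph along an Euler circuit so that in-degree equals out-degree everywhere, and then delete the auxiliary edges. Every $v\in\widetilde V$ had original degree $\ge 2$, hence Euler in- and out-degree $\ge 1$ if even, $\ge 2$ if odd; deleting at most one incident auxiliary edge preserves in-degree $\ge 1$ and out-degree $\ge 1$.
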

\begin{proof}
We may assume that the set $Z:=\Set{q_{1}(0), q_{1}(1), \ldots, q_{r}(0), q_{r}(1)}$ has cardinality $r$, that is, each element in the image of the sequence has multiplicity $2$.
If this was not the case, then either at least two elements $a,b$ would have multiplicity $\geq 3$, or at least one element $a$ would have multiplicity $\geq 4$.
In the first case, replace one of the occurrences of $a$ and $b$ by a new symbol, and in the second case replace two of the occurrences of $a$ by a new symbol.
This increases $\abs{Z}$ while preserving the lack of uniqueness property, and the function $\kappa$ constructed for the new sequence still works for the old sequence.

We define a bipartite multigraph with vertex sets $\Set{1,\dotsc,r}$ and $Z$, in which each $q_{i}(l)$ defines an edge between $i$ and $z=q_{i}(l)$.
Then each vertex appears in exactly $2$ edges, and hence the graph consists of finitely many cycles, each of which has even length since the graph is bipartite.
In each cycle, we color the edges alternatingly red and blue.
Each element of $\Set{1,\dotsc,r}$ is contained in exactly one red and one blue edge, and we declare $\kappa(i)$ to be the number for which the edge $(i,q_i(\kappa(i)))$ is red; the edge $(i,q_i(1-\kappa(i)))$ is then blue.
Since each element of $Z$ is contained in one red and one blue edge, this ensures \eqref{eq:cor:part}.
\end{proof}

\begin{corollary}
\label{cor:up-est}
Let $(X,\calB(X), \mu)$ be a measure space, $r\in\N$, let $S_{1},\dotsc,S_{r}$ be finite sets (not necessarily disjoint), and let $F^{i}_{q} \in L^{2r}(X;H)$ for every $i\in\N_r$ and $q\in S_{i}$.
Suppose that, for every sequence
\begin{equation}
\label{eq:qil}
(q_{1}(0), q_{1}(1), \ldots, q_{r}(0), q_{r}(1))
\in S_{1}^{2} \times \dotsm \times S_{r}^{2}
\end{equation}
with the uniqueness property, we have
\begin{equation}
\label{eq:up-orth}
\int_{X} \prod_{i=1}^{r} \innerp{F^{i}_{q_{i}(0)}(x)}{F^{i}_{q_{i}(1)}(x)}_{H} \dif \mu(x) = 0.
\end{equation}
Then
\begin{equation}
\label{eq:up-est}
\int_{X} \prod_{i=1}^{r} \norm[\big]{\sum_{q\in S_{i}} F^{i}_{q}(x)}_{H}^{2}\dif\mu(x)
\lesssim_{r}
\int_{X} \prod_{i=1}^{r} \Big( \sum_{q\in S_{i}} \norm{F^{i}_{q}(x)}_{H}^{2} \Big)\dif\mu(x).
\end{equation}
The implicit constant does not depend on $X$ and $S_{1},\dotsc,S_{r}$.
\end{corollary}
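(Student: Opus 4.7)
The plan is to expand the integrand on the left-hand side of \eqref{eq:up-est} via
$$\norm[\Big]{\sum_{q\in S_{i}} F^{i}_{q}(x)}_{H}^{2} = \sum_{q_{i}(0), q_{i}(1) \in S_{i}} \langle F^{i}_{q_{i}(0)}(x), F^{i}_{q_{i}(1)}(x) \rangle_{H}$$
for each $i$ and interchange the finite sum with the integral, rewriting the left-hand side of \eqref{eq:up-est} as
$$\sum_{\mathbf{q}} \int_{X} \prod_{i=1}^{r} \langle F^{i}_{q_{i}(0)}(x), F^{i}_{q_{i}(1)}(x) \rangle_{H} \dif\mu(x),$$
where $\mathbf{q} = (q_{1}(0), q_{1}(1), \ldots, q_{r}(0), q_{r}(1))$ ranges over $S_{1}^{2}\times\cdots\times S_{r}^{2}$. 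By the orthogonality hypothesis \eqref{eq:up-orth}, every $\mathbf{q}$ having the uniqueness property contributes zero, so only the non-unique $\mathbf{q}$ remain.

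For each such non-unique $\mathbf{q}$ I would first apply the pointwise Cauchy--Schwarz inequality to each inner product to obtain the bound $\norm{F^{i}_{q_{i}(0)}(x)}_{H} \norm{F^{i}_{q_{i}(1)}(x)}_{H}$. Next, invoke Lemma \ref{lem:part} to select $\kappa_{\mathbf{q}} : \N_{r} \to \{0,1\}$ such that the two ``halves'' $\mathbf{a} := (q_{i}(\kappa_{\mathbf{q}}(i)))_{i=1}^r$ and $\mathbf{b} := (q_{i}(1-\kappa_{\mathbf{q}}(i)))_{i=1}^r$ of $\mathbf{q}$ have identical image sets. Since $\mathbf{q}$ can be recovered from the triple $(\kappa_{\mathbf{q}}, \mathbf{a}, \mathbf{b})$, the assignment $\mathbf{q} \mapsto (\kappa_{\mathbf{q}}, \mathbf{a}, \mathbf{b})$ is injective; summing out $\kappa \in \{0,1\}^{r}$ yields
$$\sum_{\mathbf{q}\text{ non-unique}} \int_{X} \prod_{i=1}^{r} \norm{F^{i}_{q_{i}(0)}}_{H} \norm{F^{i}_{q_{i}(1)}}_{H}\, \dif\mu \le 2^{r} \sum_{\substack{\mathbf{a}, \mathbf{b}\,:\\ \{a_{i}\}_{i} = \{b_{i}\}_{i}}} \int_{X} \prod_{i=1}^{r} \norm{F^{i}_{a_{i}}}_{H} \norm{F^{i}_{b_{i}}}_{H}\, \dif\mu.$$

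The final step is elementary: apply AM--GM in the form $\prod_{i} \alpha_{i}\beta_{i} \leq \tfrac{1}{2}\bigl(\prod_{i}\alpha_{i}^{2} + \prod_{i}\beta_{i}^{2}\bigr)$ pointwise, use the symmetry $(\mathbf{a},\mathbf{b}) \leftrightarrow (\mathbf{b},\mathbf{a})$ of the constraint $\{a_{i}\} = \{b_{i}\}$, and observe that once $\mathbf{a}$ is fixed each $b_{i}$ must lie in the set $\{a_{1}, \ldots, a_{r}\}$ of size at most $r$, so that at most $r^{r}$ admissible vectors $\mathbf{b}$ remain. This dominates the previous display by $2^{r} r^{r} \int_{X} \prod_{i=1}^{r} \sum_{q \in S_{i}} \norm{F^{i}_{q}}_{H}^{2}\, \dif\mu$, which is \eqref{eq:up-est} with an implicit constant of order $(2r)^{r}$, uniform in $X$ and in the sets $S_{1}, \ldots, S_{r}$. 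The main technical point is the combinatorial reindexing supplied by Lemma \ref{lem:part}: without it, the sum over non-unique $\mathbf{q}$ admits no obvious upper bound in terms of $\sum_{q}\norm{F^{i}_{q}}_{H}^{2}$, because no single coordinate is guaranteed to be unique. All remaining manipulations are standard Cauchy--Schwarz/AM--GM bookkeeping.
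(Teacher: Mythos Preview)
Your proof is correct and follows essentially the same approach as the paper's: expand the left-hand side, discard unique tuples via the orthogonality hypothesis, apply Cauchy--Schwarz and then Lemma~\ref{lem:part} to reindex each non-unique tuple, use AM--GM on the two ``halves'', and finish with an $O_{r}(1)$ counting argument. The only cosmetic difference is that the paper groups the surviving tuples by their image set $Z$ (of size $\le r$) and counts the $O_{r}(1)$ length-$2r$ sequences in $Z$, whereas you track the pair $(\mathbf{a},\mathbf{b})$ directly via the injective reindexing $\mathbf{q}\mapsto(\kappa_{\mathbf{q}},\mathbf{a},\mathbf{b})$ and count the $\le r^{r}$ choices of $\mathbf{b}$ for fixed $\mathbf{a}$; the resulting constants are of the same order.
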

\begin{proof}
For any $Z\subseteq S_{1} \cup \dotsb \cup S_{r}$ with $\abs{Z}\le 2r$, let
\begin{align*}
S_Z(x)=\sum\prod_{i=1}^{r} \innerp{F^{i}_{q_{i}(0)}(x)}{F^{i}_{q_{i}(1)}(x)}_{H},
\end{align*}
where the summation is taken over all sequences as in \eqref{eq:qil} which do \emph{not} have the uniqueness property and $\Set{q_{1}(0), q_{1}(1), \dotsc, q_{r}(0), q_{r}(1)}=Z$.

Expanding the product on the left-hand side of \eqref{eq:up-est}, we obtain
\[
\sum_{(q_{1}(0), q_{1}(1), \ldots, q_{r}(0), q_{r}(1))} \int_{X} \prod_{i=1}^{r} \innerp{F^{i}_{q_{i}(0)}(x)}{F^{i}_{q_{i}(1)}(x)}_{H} \dif\mu(x)
=\sum_{\abs{Z}\le r}\int_{X}S_Z(x) \dif\mu(x),
\]
since the integral vanishes for all summation sequences with the uniqueness property by the hypothesis \eqref{eq:up-orth}.
For each sequence $(q_{1}(0), q_{1}(1), \ldots, q_{r}(0), q_{r}(1))$ without the uniqueness property such that $\Set{q_{1}(0), q_{1}(1), \dotsc, q_{r}(0), q_{r}(1)}=Z$, we apply Lemma~\ref{lem:part}, and we obtain
\begin{align}
\label{eq:up-est:2}
\begin{split}
\abs[\Big]{ \prod_{i=1}^{r} \innerp{F^{i}_{q_{i}(0)}(x)}{F^{i}_{q_{i}(1)}(x)}_{H} }
&\le
\prod_{i=1}^{r} \norm{F^{i}_{q_{i}(\kappa(i))}(x)}_H \norm{F^{i}_{q_{i}(1-\kappa(i))}(x)}_H\\
&\leq
\frac12 \prod_{i=1}^{r} \norm{F^{i}_{q_{i}(\kappa(i))}(x)}_H^{2}
+
\frac12 \prod_{i=1}^{r} \norm{F^{i}_{q_{i}(1-\kappa(i))}(x)}_H^{2}\\
&\le\sum_{\substack{q_1 \in S_{1},\dotsc, q_r\in S_{r}:\\ \Set{q_1,\dotsc, q_r}=Z}}\prod_{i=1}^{r}\norm{F^{i}_{q_{i}}(x)}_H^{2}.
\end{split}
\end{align}
Since for each $Z$ there are $O_{r}(1)$ sequences of length $2r$ taking values in $Z$, by \eqref{eq:up-est:2}, we conclude
\begin{align}
\begin{split}
\sum_{\abs{Z}\le r}
\int_{X}S_Z(x) \dif\mu(x)
&\lesssim_r\int_{X}\sum_{\abs{Z}\le r}
\sum_{\substack{q_1 \in S_{1},\dotsc, q_r\in S_{r}:\\ \Set{q_1,\dotsc, q_r}=Z}}\prod_{i=1}^{r}\norm{F^{i}_{q_{i}}(x)}_H^{2} \dif\mu(x)\\
&=
\int_{X} \prod_{i=1}^{r} \Big( \sum_{q\in S_{i}} \norm{F^{i}_{q}(x)}_{H}^{2} \Big)\dif\mu(x). \qedhere
\end{split}
\end{align}
\end{proof}

\subsection{Orthogonality between denominators}
For every $j\in\N_k$, we will show
\begin{multline}
\label{eq:128}
\sum_{x\in\Z^{d}} \Big(\sum_{q_{1}\in S_{1}} \dotsb \sum_{q_{j-1}\in S_{j-1}} \norm[\big]{\sum_{u\in \calR(q_{1} \dotsm q_{j-1}\cdot S_{j}\dotsm S_k)}f_u(x)}_{H}^{2} \Big)^r\\
=
\sum_{q_{1,1},\dotsc,q_{1,r}\in S_{1}} \dotsb \sum_{q_{j-1,1},\dotsc,q_{j-1,r}\in S_{j-1}} \sum_{x\in\Z^{d}} \prod_{i=1}^{r} \norm[\big]{\sum_{q_j\in S_j}\sum_{u\in \calR(q_{1,i} \dotsm q_{j-1,i}q_j\cdot S_{j+1}\dotsm S_k)}f_u(x)}_{H}^{2} \\
\lesssim_{r}
\sum_{q_{1,1},\dotsc,q_{1,r}\in S_{1}} \dotsb \sum_{q_{j-1,1},\dotsc,q_{j-1,r}\in S_{j-1}} \sum_{x\in\Z^{d}}
\prod_{i=1}^{r} \Big(\sum_{q_{j}\in S_{j}} \norm[\big]{\sum_{u\in \calR(q_{1} \dotsm q_{j}\cdot S_{j+1}\dotsm S_k)}f_u(x)}_{H}^{2} \Big).
\end{multline}
The last inequality follows from Corollary~\ref{cor:up-est}, since
for every sequence with the uniqueness property $(q_{j,1}(0), q_{j,1}(1), \ldots, q_{j,r}(0), q_{j,r}(1))\in S_j^{2r}$
the orthogonality condition \eqref{eq:up-orth}
is satisfied. Namely,
\begin{align}
\label{eq:129}
\sum_{x\in\Z^{d}}\prod_{i=1}^{r} \innerp[\big]{\sum_{u\in \calR(q_{1,i} \dotsm q_{j-1,i}q_{j, i}(0)\cdot S_{j+1}\dotsm S_k)}f_u(x)}{\sum_{u\in \calR(q_{1,i} \dotsm q_{j-1,i}q_{j, i}(1)\cdot S_{j+1}\dotsm S_k)}f_u(x)}_{H}=0.
\end{align}
In order to verify condition \eqref{eq:129}, we note that the function under the sum in \eqref{eq:129} can be written as a finite sum of functions of the form
\begin{align}
\label{eq:131}
\prod_{i=1}^{r} \innerp[\big]{\sum_{u\in\calR(q_{1,i}(0)\dotsm q_{k,i}(0))}f_{u}(x)}{\sum_{u\in\calR(q_{1,i}(1) \dotsm q_{k,i}(1))} f_{u}(x)}_{H},
\end{align}
where $q_{j,i}(l)\in S_{j}$ for each $i\in\N_r$, $j\in\N_k$, and $l\in\Set{0, 1}$.
Fixing $q_{j,i}(l)\in S_{j}$ for each $i\in\N_r$, $j\in\N_k$ and $l\in\Set{0, 1}$, it is not difficult to see (by the Chinese remainder theorem) that the Fourier transform of the function \eqref{eq:131} is supported in the set
\begin{align}
\label{eq:134}
\bigcup_{b\in\scrQ(Q_{0})}\bigcup_{u\in \sum_{i=1}^{r} \sum_{j=1}^{k} \sum_{l\in\Set{0, 1}} \pm\calR(q_{j,i}(l))}\big(b+u+2r \epsilon_N \bfQ\big),
\end{align}
which does not contain zero provided that for some $j\in\N_k$ the sequence
\begin{align}
\label{eq:135}
(q_{j,1}(0), q_{j,1}(1), \ldots, q_{j,r}(0), q_{j,r}(1))\in S_j^{2r}
\end{align}
has the uniqueness property.
Indeed, if the set in \eqref{eq:134} does contain zero, then $\abs{b+u}_{\infty}\le r\epsilon_N$ for some $b\in\scrQ(Q_{0})$ and $u\in \sum_{i=1}^{r} \sum_{j=1}^{k} \sum_{l\in\Set{0, 1}} \pm\calR(q_{j,i}(l))$.
Due to the uniqueness property, we can assume, without loss of generality, that $q_{j,1}(0)\neq q_{j,i}(l)$ unless $i=1$ and $l=0$.
Hence, $b+u \neq 0$ can be written as a fraction with denominator at most $Q_{0} N^{2kr}$, so that
\[
Q_{0}^{-1}N^{-2kr}
\leq
\abs{b+u}_{\infty}
\le
re^{-N^{\rho}},
\]
which is impossible for sufficiently large $N$, due to \eqref{eq:9}.

Using \eqref{eq:128} in each step, we obtain the following chain of estimates:
\begin{align}
\label{eq:158}
\nonumber\norm[\big]{\sum_{u\in \calR(S_{1}\dotsm S_{k})}f_u}_{\ell^{2r}(\Z^{d};H)}^{2r}
&=
\sum_{x\in\Z^{d}} \prod_{i=1}^{r}\norm[\big]{\sum_{u\in \calR(S_{1} S_{2} \dotsm S_{k})}f_u(x)}_{H}^{2}\\
\nonumber&\lesssim_{r}
\sum_{x\in\Z^{d}} \Big( \sum_{q_{1}\in S_{1}} \norm[\big]{\sum_{u\in \calR(q_{1} S_{2} \dotsm S_{k})}f_u(x)}_{H}^{2} \Big)^{r}\\
& \lesssim_{r} \dotsb \lesssim_{r, \rho}
\sum_{x\in\Z^{d}} \Big( \sum_{q_{1}\in S_{1}} \dotsb \sum_{q_{k}\in S_{k}} \norm[\big]{\sum_{u\in \calR(q_{1} \dotsm q_{k})}f_u(x)}_{H}^{2} \Big)^{r}.
\end{align}
Notice that we are summing positive quantities over the product set $S_{1}\dotsm S_{k}$, so at this point we may drop the characteristic function $\ind{\Lambda}$ from the definition of the multipliers $m_{u}^{\Lambda}$.

We have already exhausted all orthogonalities between
denominators. The task now is to exploit orthogonalities between
numerators.

\subsection{Orthogonality between numerators}
The following result allows us to split summation into diagonal and fully off-diagonal terms.
\begin{lemma}[{\cite[Lemma 2.3]{MR2188130}}]
\label{lem:l1-lr}
For every $n, r\in\N$ and arbitrary numbers $a_1, \dotsc, a_n\ge0$, we have
\begin{equation}
\label{eq:26}
(a_1+\dotsb+a_n)^r
\leq
(r(r-1))^{r-1} \sum_{1\le i\le n}a_i^r+2\sum_{\substack{i_1,\dotsc,i_r\in\N_n\\\text{pairwise distinct}}}a_{i_1}\dotsm a_{i_r}.
\end{equation}
\end{lemma}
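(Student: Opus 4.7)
The plan is to expand the left-hand side as a multinomial sum
\[
(a_1+\dotsb+a_n)^r=\sum_{(i_1,\dotsc,i_r)\in\N_n^r} a_{i_1}\dotsm a_{i_r},
\]
and split the index set based on whether the $r$-tuple $(i_1,\dotsc,i_r)$ has pairwise distinct coordinates or has at least one coincidence. The ``all distinct'' piece contributes exactly $\sum_{(i_1,\dotsc,i_r)\text{ pairwise distinct}} a_{i_1}\dotsm a_{i_r}$, which is half of the second term on the RHS. What remains is to show that the ``collision'' piece (tuples with at least one repeated coordinate) is controlled by $(r(r-1))^{r-1}\sum_i a_i^r$ together with, at worst, one additional copy of the distinct sum which can be absorbed.

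For the collision piece, I would classify each tuple $(i_1,\dotsc,i_r)$ by the set partition $\pi$ of $\{1,\dotsc,r\}$ induced by $s\sim t\iff i_s=i_t$. A tuple has a coincidence iff $k:=|\pi|<r$, in which case the block sizes $\alpha_1,\dotsc,\alpha_k\ge 1$ satisfy $\sum_j\alpha_j=r$ with at least one $\alpha_j\ge 2$. The sum over tuples of a given partition type equals
\[
\sum_{\substack{v_1,\dotsc,v_k\in\N_n\\ \text{pairwise distinct}}} \prod_{j=1}^k a_{v_j}^{\alpha_j},
\]
and weighted AM-GM with weights $\alpha_j/r$ yields the pointwise bound $\prod_j a_{v_j}^{\alpha_j}\le \sum_j (\alpha_j/r)\, a_{v_j}^r$. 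Summing this inequality separates the collision sum into terms of the form $\sum_{\text{distinct } v} a_{v_j}^r$, each of which can be compared either to $\sum_i a_i^r$ or, when $n$ is large, to the distinct-tuple sum on the RHS.

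The last step is the combinatorial accounting. The number of set partitions of $\{1,\dotsc,r\}$ is a constant depending only on $r$, and for each partition the explicit count of distinct $k$-tuples, combined with the weights $\alpha_j/r$, can be bounded by $(r(r-1))^{r-1}$. The main obstacle is precisely this bookkeeping: the naive estimate of $\sum_{v_1,\dotsc,v_k\text{ distinct}} a_{v_j}^r$ by $n^{k-1}\sum_i a_i^r$ is $n$-dependent and must be absorbed against $D_r:=\sum_{\text{distinct}} a_{i_1}\dotsm a_{i_r}$ on the RHS (which dominates precisely when $n$ is large). An alternative (cleaner but with a worse constant) is the union bound $T\le \binom{r}{2}\,p_2\,S^{r-2}$ followed by iteration of the lemma, which reduces the power of $S$ at each step; yet another alternative is induction on $r$ using $(\sum a_i)^r=(\sum a_i)(\sum a_i)^{r-1}$ together with the identity $\sum_{i\ne j} a_i^{r-1}a_j = S\cdot p_{r-1}-p_r$, which peels off factors one at a time. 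Regardless of route, the analytic content is a single application of weighted AM-GM per collision tuple, and the remainder is combinatorial.
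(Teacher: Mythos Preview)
Your proposal is an outline rather than a proof, and the main line of attack has a genuine gap. After applying weighted AM--GM to a collision tuple with partition $\pi$ of $k<r$ blocks, summing over distinct $(v_1,\dotsc,v_k)$ gives a bound of order $(n-1)(n-2)\cdots(n-k+1)\sum_i a_i^r$, which depends on $n$. Your suggestion to absorb this into $D_r=\sum_{\text{distinct}}a_{i_1}\cdots a_{i_r}$ ``when $n$ is large'' does not work: take $a_1=1$ and $a_2=\cdots=a_n=\epsilon$ with $\epsilon$ tiny; then $\sum_i a_i^r\approx 1$ and the collision bound is $\sim n^{k-1}$, while $D_r\lesssim n^{r-1}\epsilon^{r-1}$ can be made arbitrarily small. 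So neither side of the RHS absorbs the $n$-dependent term in general, and the AM--GM route cannot close with an $n$-independent constant.

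The paper's argument uses the collision bound you mention as an alternative, $S^r\le D_r+\binom{r}{2}p_2\,S^{r-2}$ (where $S=\|a\|_{\ell^1}$, $p_2=\|a\|_{\ell^2}^2$), but the key idea you are missing is a \emph{dichotomy} rather than iteration: either $S^r\le 2D_r$ and one is done, or $D_r<S^r/2$, in which case the collision bound forces $S^2\le r(r-1)\,p_2$, i.e.\ $\|a\|_{\ell^1}\le\sqrt{r(r-1)}\,\|a\|_{\ell^2}$. Log-convexity of $\ell^p$ norms then upgrades this to $\|a\|_{\ell^1}\le (r(r-1))^{1-1/r}\|a\|_{\ell^r}$, which is exactly the first term on the RHS. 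The point is that one never estimates the collision piece in isolation; one instead uses its size relative to $D_r$ to deduce an $\ell^1$--$\ell^2$ inequality and then bootstraps to $\ell^r$.
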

We give a simplified proof.
\begin{proof}
We may assume $r\geq 2$.
The inequality \eqref{eq:26} is clearly verified if
\begin{align}
\label{eq:136}
(a_1+\dotsb+a_n)^r
\leq
2 \sum_{\substack{i_1,\dotsc,i_r\in\N_n\\\text{pairwise distinct}}}a_{i_1}\dotsm a_{i_r}.
\end{align}
Otherwise, expanding the $r$-th power, we obtain
\[
(a_1+\dotsb+a_n)^r
\leq
\sum_{\substack{i_1,\dotsc,i_r\in\N_n\\\text{pairwise distinct}}}a_{i_1}\dotsm a_{i_r} + \frac{r(r-1)}{2} (a_1^{2}+\dotsb+a_n^{2})(a_1+\dotsb+a_n)^{r-2}.
\]
Using the failure of inequality \eqref{eq:136}, this implies
\[
(a_1+\dotsb+a_n)^r \leq r(r-1) (a_1^{2}+\dotsb+a_n^{2})(a_1+\dotsb+a_n)^{r-2},
\]
so that $\norm{a}_{\ell^{1}} \leq \sqrt{r(r-1)} \norm{a}_{\ell^{2}}$.
By logarithmic convexity of $\ell^{p}$ norms, this implies
\[
\norm{a}_{\ell^{1}}
\leq
\sqrt{r(r-1)} \norm{a}_{\ell^{1}}^{1-\theta} \norm{a}_{\ell^{r}}^{\theta},
\quad \text{with} \quad
\frac{1}{2} = \frac{1-\theta}{1} + \frac{\theta}{r}.
\]
Thus, $\norm{a}_{\ell^{1}} \leq (r(r-1))^{1/(2\theta)} \norm{a}_{\ell^{r}} = (r(r-1))^{1-1/r} \norm{a}_{\ell^{r}}$, and this also shows \eqref{eq:26}.
\end{proof}

For any $M\subseteq \N_k$, let
\[
S_{M} := \prod_{j\in M} S_{j},
\]
with the convention $S_{\emptyset}=\Set{1}$.
Suppose that $M\subseteq \N_k$ and $L\subseteq M$ satisfy $M\setminus L\not=\emptyset$, then, for $j=\min M\setminus L$, we will show
\begin{align}
\label{eq:138}
\begin{split}
\sum_{x\in\Z^{d}} \sum_{\sigma\in S_L} \Big( \sum_{\tau\in S_{M\setminus L}}\ &\sum_{w\in\calR(S_{M^c})} \norm[\big]{\sum_{u\in \calR(\sigma\tau)}f_{u+w}(x)}_{H}^{2} \Big)^r\\
& \lesssim \sum_{x\in\Z^{d}} \sum_{\sigma\in S_{L\cup\Set{j}}} \Big( \sum_{\tau\in S_{M\setminus (L\cup\Set{j})}}\ \sum_{w\in\calR(S_{M^c})} \norm[\big]{\sum_{u\in \calR(\sigma\tau)}f_{u+w}(x)}_{H}^{2} \Big)^r\\
& +\sum_{x\in\Z^{d}} \sum_{\sigma\in S_{L}} \Big(
\sum_{\tau\in S_{M\setminus
(L\cup\Set{j})}}\ \sum_{w\in\calR(S_{(M\setminus\Set{j})^c})}
\norm[\big]{\sum_{u\in
\calR(\sigma\tau)}f_{u+w}(x)}_{H}^{2} \Big)^r.
\end{split}
\end{align}
Here we use the convention $M^c=\N_k\setminus M$ for subsets $M\subseteq \N_k$, in particular, $\N_k^c=\emptyset$.
Assume momentarily that \eqref{eq:138} holds.
By iterative application of \eqref{eq:138}, we obtain
\begin{align}
\label{eq:155}
\begin{split}
\MoveEqLeft
\sum_{x\in\Z^{d}} \Big( \sum_{q_{1}\in S_{1}} \dotsb \sum_{q_{k}\in S_{k}} \norm[\big]{\sum_{u\in \calR(q_{1} \dotsm q_{k})}f_u(x)}_{H}^{2} \Big)^{r}\\
&=\sum_{x\in\Z^{d}} \sum_{\sigma \in S_{\emptyset}} \Big( \sum_{\tau\in S_{\N_k}} \norm[\big]{\sum_{u\in \calR(\sigma\tau)}f_u(x)}_{H}^{2} \Big)^{r}\\
&\lesssim_r \sum_{x\in\Z^{d}} \sum_{\sigma\in S_{\Set{1}}} \Big( \sum_{\tau\in S_{\N_k\setminus \Set{1}}}\ \sum_{w\in\calR(S_{\N_k^c})} \norm[\big]{\sum_{u\in \calR(\sigma\tau)}f_{u+w}(x)}_{H}^{2} \Big)^r\\
& +\sum_{x\in\Z^{d}} \sum_{\sigma\in S_{\emptyset}} \Big(
\sum_{\tau\in S_{\N_k\setminus
\Set{1}}}\ \sum_{w\in\calR(S_{(\N_k\setminus\Set{1})^c})}
\norm[\big]{\sum_{u\in
\calR(\sigma\tau)}f_{u+w}(x)}_{H}^{2} \Big)^r\\
&\lesssim\ldots\lesssim_{r, \rho}\sum_{x\in\Z^{d}}\sum_{M\subseteq\N_k}
\sum_{\sigma\in S_{M}} \Big(\sum_{w\in\calR(S_{M^c})} \norm[\big]{\sum_{u\in \calR(\sigma)}f_{u+w}(x)}_{H}^{2} \Big)^r.
\end{split}
\end{align}
Then, by \eqref{eq:158} and \eqref{eq:155}, we get
\begin{align}
\label{eq:162}
\norm[\big]{\sum_{u\in \calR(S_{1}\dotsm S_{k})}f_u}_{\ell^{2r}(\Z^{d};H)}^{2r} \lesssim_{r, \rho}
\sum_{x\in\Z^{d}}\sum_{M\subseteq\N_k}
\sum_{\sigma\in S_{M}} \Big(\sum_{w\in\calR(S_{M^c})} \norm[\big]{\sum_{u\in \calR(\sigma)}f_{u+w}(x)}_{H}^{2} \Big)^r.
\end{align}

In order to prove \eqref{eq:138}, we use Lemma~\ref{lem:l1-lr} and obtain
\begin{align}
\MoveEqLeft
\sum_{x\in\Z^{d}} \sum_{\sigma\in S_L} \Big( \sum_{\tau\in S_{M\setminus L}} \sum_{w\in\calR(S_{M^c})} \norm[\big]{\sum_{u\in \calR(\sigma\tau)}f_{u+w}(x)}_{H}^{2} \Big)^r\nonumber \\
& =\sum_{x\in\Z^{d}} \sum_{\sigma\in S_L} \Big( \sum_{q\in S_j}\sum_{\tau\in S_{M\setminus(L\cup\Set{j})}}\ \sum_{w\in\calR(S_{M^c})} \norm[\big]{\sum_{u\in \calR(\sigma\tau q)}f_{u+w}(x)}_{H}^{2} \Big)^r \nonumber\\
& \lesssim \sum_{x\in\Z^{d}} \sum_{\sigma\in S_L} \sum_{q\in S_j} \Big(\sum_{\tau\in S_{M\setminus(L\cup\Set{j})}}\ \sum_{w\in\calR(S_{M^c})} \norm[\big]{\sum_{u\in \calR(\sigma\tau q)}f_{u+w}(x)}_{H}^{2} \Big)^r\label{eq:139}\\
&+\sum_{\sigma\in S_L}\sum_{\substack{q_{j,1},\dotsc,q_{j,r}\in S_{j}\\\text{pairwise distinct}}}\sum_{x\in\Z^{d}} \prod_{i=1}^r
\Big(\sum_{\tau\in S_{M\setminus(L\cup\Set{j})}}\ \sum_{w\in\calR(S_{M^c})} \norm[\big]{\sum_{u\in \calR(\sigma\tau q_{j, i})}f_{u+w}(x)}_{H}^{2} \Big)\label{eq:140}.
\end{align}
The expression \eqref{eq:139} appears on the right-hand side of \eqref{eq:138}.
It remains to estimate \eqref{eq:140}.
By Corollary~\ref{cor:up-est}, for all pairwise distinct $q_{j,1},\dotsc,q_{j,r}\in S_{j}$, all $\tau_1,\ldots,\tau_r\in S_{M\setminus(L\cup\Set{j})}$, and all $w_1,\ldots, w_r\in\calR(S_{M^c})$, we obtain
\begin{align}
\label{eq:num-orth-1}
\begin{split}
\sum_{x\in\Z^{d}} \prod_{i=1}^r\norm[\big]{\sum_{u\in \calR(\sigma\tau_i q_{j, i})}&f_{u+w_i}(x)}_{H}^{2}
=\sum_{x\in\Z^{d}} \prod_{i=1}^r\norm[\big]{\sum_{v\in\calR(q_{j, i})}\sum_{u\in \calR(\sigma\tau_i)}f_{u+w_i+v}(x)}_{H}^{2}\\
& \lesssim_{r}
\sum_{x\in\Z^{d}} \prod_{i=1}^r\Big(\sum_{v\in\calR(q_{j, i})}\norm[\big]{\sum_{u\in \calR(\sigma\tau_i)} f_{u+w_i+v}(x)}_{H}^{2} \Big).
\end{split}
\end{align}
The last inequality follows from \eqref{eq:up-est}, since the orthogonality condition \eqref{eq:up-orth} is satisfied.
Namely,
\begin{align}
\label{eq:141}
\sum_{x\in\Z^{d}} \prod_{i=1}^r \innerp[\big]{\sum_{u\in \calR(\sigma\tau_i)}f_{u+w_i+v_i(0)}(x)}{\sum_{u\in \calR(\sigma\tau_i)}f_{u+w_i+v_i(1)}(x)}_H=0
\end{align}
holds for every sequence
\begin{align}
\label{eq:174}
(v_1(0), v_1(1), \ldots, v_r(0), v_r(1))
\in
\prod_{i=1}^r \bigl(\calR(q_{j, i})\times \calR(q_{j, i})\bigr)
\end{align}
with the uniqueness property.
In order to verify \eqref{eq:141}, we note that the function under the sum in \eqref{eq:141} can be written as a finite sum of the functions of the form
\begin{align}
\label{eq:142}
\prod_{i=1}^r \innerp[\big]{f_{u_i(0)+w_i+v_i(0)}(x)}{f_{u_i(1)+w_i+v_i(1)}(x)}_H,
\end{align}
where $u_i(l)\in\calR(\sigma\tau_i)$ for all $i\in\N_r$ and $l\in\Set{0, 1}$.
One can easily see that, fixing $v_i(l)\in\calR(q_{j, i})$ and $u_i(l)\in\calR(\sigma\tau_i)$ for all $i\in\N_r$ and $l\in\Set{0, 1}$, the Fourier transform of the function from \eqref{eq:142} is supported in the set
\begin{align}
\label{eq:143}
\bigcup_{b\in\scrQ(Q_{0})}\Big(b+\sum_{i=1}^{r} (v_i(0)-v_i(1)+u_i(0)-u_i(1))+2r \epsilon_N \bfQ\Big),
\end{align}
which does not contain zero due to the uniqueness property of \eqref{eq:174}.
Indeed, suppose for a contradiction that the set in \eqref{eq:143} does contain zero.
This means that $\abs{b+u}_{\infty}\le r\epsilon_N$ for some $b\in\scrQ(Q_{0})$ and $u=\sum_{i=1}^{r} v_i(0)-v_i(1)+u_i(0)-u_i(1)$.
Due to the uniqueness property we can assume, without loss of generality, that $v_1(0)\not=v_1(1)$.
Also, the denominators of $v_i(l)$, $i\neq 1$, and of $u_{i}(l)$ for all $i$ are coprime to the denominator of $v_{1}(0)$.
Hence $b+u \neq 0$ can be written as a fraction with denominator at most $Q_{0} N^{4rD}$.
This implies
\[
Q_{0}^{-1} N^{-4rD} \leq \abs{b+u}_{\infty} \leq re^{-N^{\rho}},
\]
which is a contradiction for sufficiently large $N$, due to \eqref{eq:9}.

Finally, by \eqref{eq:num-orth-1}, we see that
\begin{align*}
\eqref{eq:140}
&=
\sum_{\sigma\in S_L}\sum_{\substack{q_{j,1},\dotsc,q_{j,r}\in S_{j}\\\text{pairwise distinct}}}
\sum_{\substack{\tau_1,\dotsc,\tau_r \\\in S_{M\setminus(L\cup\Set{j})}}}
\ \sum_{\substack{w_1,\dotsc, w_r\\\in\calR(S_{M^c})}}
\sum_{x\in\Z^{d}}
\prod_{i=1}^r\norm[\big]{\sum_{u\in \calR(\sigma\tau_i q_{j, i})}f_{u+w_i}(x)}_{H}^{2}
\\ & \lesssim
\sum_{\sigma\in S_L}
\sum_{\substack{q_{j,1},\dotsc,q_{j,r}\in S_{j}\\\text{pairwise distinct}}}
\sum_{\substack{\tau_1,\dotsc,\tau_r\\\in S_{M\setminus(L\cup\Set{j})}}}
\ \sum_{\substack{w_1,\dotsc, w_r\\\in\calR(S_{M^c})}}
\sum_{x\in\Z^{d}} \prod_{i=1}^r\Big(\sum_{v\in\calR(q_{j, i})}\norm[\big]{\sum_{u\in \calR(\sigma\tau_i)}f_{u+w_i+v}(x)}_{H}^{2} \Big)
\\ &=
\sum_{x\in\Z^{d}}\sum_{\sigma\in S_L}
\Big(\sum_{\tau\in S_{M\setminus(L\cup\Set{j})}}\ \sum_{w\in\calR(S_{M^c})}\sum_{v\in\calR(S_{j})}\norm[\big]{\sum_{u\in \calR(\sigma\tau)}f_{u+w+v}(x)}_{H}^{2} \Big)^r
\\ &=
\sum_{x\in\Z^{d}}\sum_{\sigma\in S_L}
\Big(\sum_{\tau\in S_{M\setminus(L\cup\Set{j})}}\ \sum_{w\in\calR(S_{(M\setminus\Set{j})^c})}\norm[\big]{\sum_{u\in \calR(\sigma\tau)}f_{u+w}(x)}_{H}^{2} \Big)^r,
\end{align*}
as claimed.
We have exhausted all orthogonality provided by disjoint Fourier supports of the functions $f_{u}$.
Now we will have to tackle the multipliers $\Theta$.

\subsection{Estimates for the multiplier $\Theta$}
\label{sec:sq-fct}
For fixed $M\subseteq\N_{k}$ and $\sigma\in S_{M}$, we view the
corresponding term of \eqref{eq:162} as the $2r$-th power of a norm of
a function with values in $\ell^{2}(\calR(S_{M^{c}});H)$. In order to
estimate the right-hand side of \eqref{eq:162}, we will use the following
vector-valued version of the Marcinkiewicz--Zygmund theorem.
\begin{theorem}[vector-valued Marcinkiewicz--Zygmund]
\label{thm:MZ}
Let $H$ be a separable Hilbert space, and $(X, \calB(X), \mu)$ and $(Y, \calB(Y), \nu)$ be $\sigma$-finite measure spaces. For given $p,q\in(0, \infty)$, we assume that $T : L^{p}(X;H) \to L^{q}(Y;H)$ is a bounded linear operator.
Let $\calZ$ be a countable set of indices.
Then, for every sequence of functions $(f_n: n\in\calZ)$, we have
\begin{align}
\label{eq:177}
\norm[\Big]{ \Big( \sum_{n\in\calZ} \norm{Tf_{n}(y)}_{H}^{2} \Big)^{1/2}}_{L^{q}(\dif\nu(y))}
\lesssim_{p,q}
\norm{T}_{L^{p}(X;H)\to L^{q}(Y;H)}
\norm[\Big]{ \Big( \sum_{n\in\calZ} \norm{f_{n}(x)}_{H}^{2} \Big)^{1/2}}_{L^{p}(\dif\mu(x))}.
\end{align}
\end{theorem}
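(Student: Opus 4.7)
The plan is to use Gaussian randomization, which converts the Hilbert-valued square function $(\sum_n\|v_n\|_H^2)^{1/2}$ into the $r$-th moment of a Hilbert-valued Gaussian series for any $r\in(0,\infty)$. Specifically, if $(g_n)_{n\in\calZ}$ are independent standard Gaussian random variables on an auxiliary probability space $(\Omega,\bbP)$, then for any finite collection $(v_n)$ in a separable Hilbert space $H$ the random variable $\sum_n g_n v_n$ is Gaussian in $H$ with variance $\sum_n \|v_n\|_H^2$ in each direction, so
\[
\bbE\Big\|\sum_n g_n v_n\Big\|_H^r = c_r \Big(\sum_n \|v_n\|_H^2\Big)^{r/2}, \qquad r\in(0,\infty),
\]
with $c_r$ depending only on $r$. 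After reducing to finite subsets of $\calZ$ by monotone convergence, I would apply this identity with $r=q$ pointwise in $y\in Y$, use Fubini, and pull $T$ out by linearity, obtaining
\[
\int_Y\Big(\sum_n \|Tf_n(y)\|_H^2\Big)^{q/2} \dif\nu(y)
= c_q^{-1}\,\bbE\Big\|T\Big(\sum_n g_n f_n\Big)\Big\|_{L^q(Y;H)}^q
\le c_q^{-1}\|T\|^q\,\bbE\Big\|\sum_n g_n f_n\Big\|_{L^p(X;H)}^q.
\]

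The remaining task is to bound the last expression by a constant times $\|T\|^q\,\bigl\|(\sum_n\|f_n\|_H^2)^{1/2}\bigr\|_{L^p(\mu)}^q$. When $p=q$ this is immediate: Fubini and the Gaussian identity applied pointwise in $x\in X$ with $r=p$ give exactly this. The main obstacle is the mismatch between the outer exponent $q$ (coming from the range of $T$) and the inner exponent $p$ (coming from the domain of $T$); one cannot freely exchange $\bbE(\,\cdot\,)^{q/p}$ with $\int_X$.

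To overcome this, I would invoke \emph{Kahane's inequality} in the Banach space $B=L^p(X;H)$: for any Banach space $B$, any finite sequence $(F_n)$ in $B$, and any $0<p,q<\infty$,
\[
\Big(\bbE\Big\|\sum_n g_n F_n\Big\|_B^q\Big)^{1/q}
\lesssim_{p,q}
\Big(\bbE\Big\|\sum_n g_n F_n\Big\|_B^p\Big)^{1/p}.
\]
Applying this to $F_n=f_n\in L^p(X;H)$ reduces the outer $q$-moment to the outer $p$-moment, which by Fubini factorises as $\int_X \bbE\|\sum_n g_n f_n(x)\|_H^p\,\dif\mu(x)$, and a second application of the pointwise Gaussian identity with $r=p$ identifies this with $c_p\,\bigl\|(\sum_n\|f_n\|_H^2)^{1/2}\bigr\|_{L^p(\mu)}^p$. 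Chaining these inequalities and taking $q$-th roots yields \eqref{eq:177}. For $p<1$, where $L^p(X;H)$ is only a quasi-Banach space, Kahane's inequality still holds by the standard extension (reducing to $L^1$ of a normalised variable via truncation), so the argument covers the full range $p,q\in(0,\infty)$.
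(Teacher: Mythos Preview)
Your overall strategy---randomize, apply $T$, then un-randomize---is exactly what the paper does, but there is a false step. You claim that $\sum_n g_n v_n$ is Gaussian in $H$ ``with variance $\sum_n\|v_n\|_H^2$ in each direction'', giving the exact identity $\mathbb{E}\bigl\|\sum_n g_n v_n\bigr\|_H^r = c_r\bigl(\sum_n\|v_n\|_H^2\bigr)^{r/2}$. This is wrong: the variance in a unit direction $e$ is $\sum_n|\langle v_n,e\rangle_H|^2$, which depends on $e$. For instance, with $v_1,v_2$ orthonormal one gets $\mathbb{E}\|g_1v_1+g_2v_2\|_H^4=8$, whereas with $v_1=v_2$ a single unit vector one gets $\mathbb{E}\|(g_1+g_2)v_1\|_H^4=12$; in both cases $\sum_n\|v_n\|_H^2=2$. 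What \emph{is} true is the two-sided estimate $\mathbb{E}\bigl\|\sum_n g_n v_n\bigr\|_H^r \simeq_r \bigl(\sum_n\|v_n\|_H^2\bigr)^{r/2}$, the Hilbert-valued Khintchine inequality, and this suffices for your argument once you replace every ``$=$'' coming from your Gaussian identity by ``$\lesssim$'' or ``$\simeq$''.

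With that correction your proof is sound and close in spirit to the paper's. The paper uses Rademacher variables rather than Gaussians, proves the Hilbert-valued Khintchine inequality directly from the scalar double Khintchine inequality, and then---rather than invoking Kahane's inequality in the Banach space $L^p(X;H)$---handles the $p\neq q$ mismatch by the elementary device of H\"older's inequality when $p\ge q$ and Minkowski's integral inequality when $p<q$, each time combined with the Hilbert-valued Khintchine applied pointwise in $x$. This avoids the Banach-space Kahane inequality entirely (the paper remarks explicitly that Kahane's general result would work but is not needed). Your route is shorter to write once Kahane is granted, but relies on a deeper external fact; the paper's route is more self-contained.
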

The proof of Theorem~\ref{thm:MZ} is identical to the proof in the
scalar-valued case, but uses the following vector-valued extension of
Khintchine's inequality.
\begin{lemma}
\label{lem:kahane-khintchine}
Let $H$ be a separable Hilbert space and let $(r_n(t): n\in\N)$ be a system of Rademacher functions on $[0, 1]$.
Then, for every $p\in(0, \infty)$ and $(b_{n}: n\in\N)\subseteq H$, we have
\begin{align}
\label{eq:178}
\bigg( \int_0^1 \norm[\Big]{\sum_{n\in\N}r_{n}(t)b_{n}}_{H}^{p} \dif t \bigg)^{1/p}
\simeq_{p}
\bigg( \int_0^1 \norm[\Big]{\sum_{n\in\N}r_{n}(t)b_{n}}_{H}^{2} \dif t \bigg)^{1/2}.
\end{align}
The implicit constant does not depend on $H$.
\end{lemma}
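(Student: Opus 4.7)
The plan is to reduce the inequality \eqref{eq:178} to the scalar-valued Khintchine inequality by expanding in an orthonormal basis of $H$ and applying Minkowski's inequality to interchange $L^{p}$ and $\ell^{2}$ norms. First, fix an orthonormal basis $(e_{j})_{j\in\N}$ of the closed linear span of $\{b_{n} : n\in\N\}$ in $H$, write $b_{n}=\sum_{j}b_{n,j}e_{j}$ with $b_{n,j}\in\R$ (or $\C$), and observe that
\[
\Big\|\sum_{n\in\N}r_{n}(t)b_{n}\Big\|_{H}^{2}
=
\sum_{j\in\N}\Big|\sum_{n\in\N}r_{n}(t)b_{n,j}\Big|^{2}.
\]
In particular, by orthogonality of the Rademacher functions, the right-hand side of \eqref{eq:178} equals $(\sum_{n}\|b_{n}\|_{H}^{2})^{1/2}$, so it suffices to show that the left-hand side is comparable to this quantity.

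For $p\geq 2$, I apply Minkowski's integral inequality in $L^{p/2}([0,1];\ell^{1}(\N))$ to pull the $\ell^{1}$-sum in $j$ outside the $L^{p/2}$-norm in $t$, obtaining
\[
\Big(\int_{0}^{1}\Big\|\sum_{n}r_{n}(t)b_{n}\Big\|_{H}^{p}\dif t\Big)^{2/p}
\leq
\sum_{j\in\N}\Big(\int_{0}^{1}\Big|\sum_{n}r_{n}(t)b_{n,j}\Big|^{p}\dif t\Big)^{2/p}.
\]
Invoking the scalar Khintchine inequality coordinatewise and using $\sum_{j}\sum_{n}b_{n,j}^{2}=\sum_{n}\|b_{n}\|_{H}^{2}$ gives the upper bound in \eqref{eq:178} with a constant depending only on $p$. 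The matching lower bound in this range is immediate from Jensen's inequality.

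For $p\in(0,2)$, the upper bound in \eqref{eq:178} is again Jensen's inequality. For the lower bound, I use the standard Paley--Zygmund-type interpolation: choose any $q>2$ and $t\in(0,1)$ with $pt+q(1-t)=2$, and apply Hölder's inequality to get
\[
\int_{0}^{1}\Big\|\sum_{n}r_{n}(t')b_{n}\Big\|_{H}^{2}\dif t'
\leq
\Big(\int_{0}^{1}\|\cdot\|_{H}^{p}\dif t'\Big)^{t}\Big(\int_{0}^{1}\|\cdot\|_{H}^{q}\dif t'\Big)^{1-t}.
\]
Combining this with the already established estimate at exponent $q$ (which bounds the last factor by a constant times $(\sum_{n}\|b_{n}\|_{H}^{2})^{q(1-t)/2}$) and rearranging yields the desired comparison.

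There is no real obstacle here; the only conceptual input is that $H$ being a Hilbert space lets us represent the $H$-norm as an unconditional $\ell^{2}$-sum over an orthonormal basis, so that scalar Khintchine applies coordinatewise and Minkowski's inequality yields a dimension-free constant. The argument is entirely parallel to the scalar case and the implicit constant is manifestly independent of $H$.
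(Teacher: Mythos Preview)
Your proof is correct. The overall strategy---expand in an orthonormal basis, reduce to the scalar Khintchine inequality, and for $p<2$ interpolate via H\"older against a large exponent---matches the paper's. The difference lies in how the $p\geq 2$ case is handled. You apply Minkowski's inequality (the triangle inequality in $L^{p/2}$) to pull the $\ell^{2}$-sum over the basis index $j$ outside, and then invoke scalar Khintchine coordinatewise. The paper instead introduces a second Rademacher system $(r_{m}(s))_{m\in\N}$ so as to write $\|\sum_{n}r_{n}(t)b_{n}\|_{H}=(\int_{0}^{1}|F(t,s)|^{2}\,\dif s)^{1/2}$ with $F(t,s)=\sum_{n,m}a_{n,m}r_{n}(t)r_{m}(s)$, and then applies the scalar \emph{double} Khintchine inequality to $F$. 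Your route is slightly more elementary, avoiding the auxiliary randomization and the two-parameter Khintchine inequality; the paper's route has the minor aesthetic appeal of reducing everything to a single scalar estimate. Both arguments deliver constants depending only on $p$ and not on $H$, and the interpolation step for $p<2$ is the same in both (the paper takes $q=4$, you allow any $q>2$).
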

Kahane \cite{MR0169279} showed inequality \eqref{eq:178} with any Banach space
$B$ in place of $H$. However, for our purposes \eqref{eq:178}
suffices. We give a straightforward proof of \eqref{eq:178}, which in
fact will be a consequence of double scalar-valued Khintchine's
inequality.
\begin{proof}[Proof of Lemma~\ref{lem:kahane-khintchine}]
For each $t\in[0, 1]$, we set
\[
B(t)=\norm[\Big]{\sum_{n\in\N} r_n(t)b_n}_H.
\]
Fix an orthonormal basis $(e_m:m\in\N)$ of $H$ and let $(a_{n, m}: m\in\N)$ be the coordinates of $b_n$ in that basis.
For every $n\in\N$, we have
\[
b_n=\sum_{m\in\N}a_{n, m}e_m.
\]
Observe that
\[
B(t)=\norm[\Big]{\sum_{n\in\N} r_n(t)b_n}_H= \bigg(\int_0^1F(t, s)^2\dif s\bigg)^{1/2},
\]
where
\[
F(t, s)=\sum_{n\in\N}\sum_{m\in\N} a_{n, m} r_n(t)r_m(s).
\]
We prove that, for all $p\in(0, \infty)$, we have
\begin{align}
\label{eq:179}
\norm{B}_{L^p}\simeq_p \norm{B}_{L^2}.
\end{align}
We note that \eqref{eq:179} is exactly \eqref{eq:178} rewritten in the new notation.
So if $p\ge2$, then by H{\"o}lder's inequality we obtain
\[
B(t)\le\bigg(\int_0^1\abs{F(t, s)}^p\dif s\bigg)^{1/p},
\]
and
\[
\norm{B}_{L^p}\le\bigg(\int_0^1\int_0^1\abs{F(t, s)}^p\dif t\dif s\bigg)^{1/p}.
\]
Using the scalar-valued double Khinchine inequality, we get
\[
\bigg(\int_0^1\int_0^1\abs{F(t, s)}^p\dif t\dif s\bigg)^{1/p}\lesssim \bigg(\int_0^1\int_0^1\abs{F(t, s)}^2\dif t\dif s\bigg)^{1/2}= \norm{B}_{L^2}.
\]
Since the case $p\in(0, 2)$ is obvious by H{\"o}lder's inequality, we conclude that the inequality
\begin{align}
\label{eq:175}
\norm{B}_{L^p}\lesssim_p \norm{B}_{L^2}
\end{align}
holds for all $p\in(0, \infty)$.
To prove the converse, it suffices to show that
\begin{align}
\label{eq:176}
\norm{B}_{L^2}\lesssim_p \norm{B}_{L^p}
\end{align}
for every $p\in(0, 2)$.
For this purpose, we choose $\theta\in(0, 1)$ such that $2=p\theta +4(1-\theta)$, then
\[
\norm{B}_{L^2}^2\le\norm{B}_{L^p}^{p\theta}\,\norm{B}_{L^4}^{4(1-\theta)}.
\]
By \eqref{eq:175}, we get
\[
\norm{B}_{L^2}^2\le\norm{B}_{L^p}^{p\theta}\,\norm{B}_{L^4}^{4(1-\theta)}\lesssim \norm{B}_{L^p}^{p\theta}\,\norm{B}_{L^2}^{4(1-\theta)}
\]
and, dividing both sides by $\norm{B}_{L^2}^{4(1-\theta)}$, we obtain \eqref{eq:176}.
\end{proof}
\begin{proof}[Proof of Theorem~\ref{thm:MZ}]
We can assume, without loss of generality, that $\calZ=\N$.
By the monotone convergence theorem, we may assume that only finitely many of the functions $f_{n}$ do not vanish.
For any sequence $(b_{n}:n\in\N)\subseteq H$ with finitely many non-vanishing entries, we have
\begin{equation}
\label{eq:l2-Bernoulli}
\sum_{n\in \N} \norm{b_{n}}_{H}^{2}
=
\int_0^1 \innerp[\big]{\sum_{n\in\N} r_{n}(t) b_{n}}{\sum_{m\in\N} r_{m}(t) b_{m}}_{H}\dif t=\int_0^1\norm[\Big]{\sum_{n\in\N} r_{n}(t) b_{n}}_H^2\dif t.
\end{equation}
This allows us to estimate
\begin{align*}
\MoveEqLeft
\norm[\Big]{ \Big( \sum_{n\in\N} \norm{Tf_{n}(y)}_{H}^{2} \Big)^{1/2}}_{L^{q}(\dif \nu(y))}\\
&=
\norm[\bigg]{ \bigg(\int_0^1 \norm[\Big]{\sum_{n\in\N} r_{n}(t) Tf_{n}(y)}_{H}^{2} \dif t\bigg)^{1/2}}_{L^{q}(\dif\nu(y))} && \text{by \eqref{eq:l2-Bernoulli}}\\
&\lesssim
\norm[\bigg]{ \bigg(\int_0^1 \norm[\Big]{\sum_{n\in\N} r_{n}(t) Tf_{n}(y)}_{H}^{q} \dif t\bigg)^{1/q}}_{L^{q}(\dif\nu(y))} && \text{by Lemma~\ref{lem:kahane-khintchine}}\\
&=
\bigg( \int_0^1 \norm[\Big]{T\Big(\sum_{n\in\N}r_{n}(t)f_{n}\Big)}_{L^q(Y;H)}^{q}\dif t \bigg)^{1/q} && \text{by Fubini's theorem}\\
&\lesssim
\bigg( \int_0^1 \norm[\Big]{\sum_{n\in\N}r_{n}(t)f_{n}}_{L^p(X;H)}^{q}\dif t \bigg)^{1/q}
&& \text{by the hypothesis.}
\end{align*}
If $p\geq q$, then, by H{\"o}lder's inequality, Lemma~\ref{lem:kahane-khintchine}, and \eqref{eq:l2-Bernoulli}, this is bounded by
\begin{align*}
\bigg( \int_0^1 \norm[\Big]{\sum_{n\in\N}r_{n}(t)f_{n}}_{L^p(X;H)}^{p}\dif t \bigg)^{1/p}
&\lesssim
\norm[\bigg]{ \bigg(\int_0^1 \norm[\Big]{\sum_{n\in\N} r_{n}(t) f_{n}(x)}_{H}^{2} \dif t\bigg)^{1/2}}_{L^{p}(\dif\mu(x))}\\
&=
\norm[\Big]{ \Big( \sum_{n\in\N} \norm{f_{n}(x)}_{H}^{2} \Big)^{1/2}}_{L^{p}(\dif\mu(x))}.
\end{align*}
If $p<q$, then, by Minkowski's integral inequality for the $L^{q/p}$ norm on the probability space, Lemma~\ref{lem:kahane-khintchine}, and \eqref{eq:l2-Bernoulli}, the above is again bounded by
\begin{align*}
\norm[\bigg]{ \bigg(\int_0^1 \norm[\Big]{\sum_{n\in\N} r_{n}(t) f_{n}(x)}_{H}^{q} \dif t\bigg)^{1/q}}_{L^{p}(\dif\mu(x))}&
\lesssim
\norm[\bigg]{ \bigg(\int_0^1 \norm[\Big]{\sum_{n\in\N} r_{n}(t) f_{n}(x)}_{H}^{2} \dif t\bigg)^{1/2}}_{L^{p}(\dif\mu(x))}\\
&=
\norm[\Big]{ \Big( \sum_{n\in\N} \norm{f_{n}(x)}_{H}^{2} \Big)^{1/2}}_{L^{p}(\dif\mu(x))}.
\qedhere
\end{align*}
\end{proof}
The vector-valued extension of the hypothesis~\eqref{eq:IW-Lp-hypothesis} allows us to eliminate the multiplier $\Theta$ from the picture.
Let $\phi$ be a smooth function supported on $\frac{19}{10} \bfQ$ such that $0\le \phi\le 1$ and identically equal to $1$ on $\frac{11}{10} \bfQ$.
Let $\psi$ be a non-negative smooth function with $\int_{\R^d}\psi(x)\dif x=1$ supported on $\frac{1}{10} \bfQ$.
Let $\zeta=\phi*\psi$.
Note that $\zeta$ is supported on $2 \bfQ$ and identically equal to $1$ on $\bfQ$ and $0\le \zeta\le 1$.
Denote $\zeta_{\epsilon_N}(x) = \zeta(\epsilon_N^{-1}x)$ with $0<\epsilon_N\le e^{-N^{\rho}}$ as in \eqref{eq:IW-mult}.
We fix $M\subseteq\N_{k}$ and $\sigma\in S_{M}$ and expand the corresponding term \eqref{eq:162} as
\begin{equation}
\label{eq:fu-M-sigma-expand}
\sum_{x\in\Z^d}\bigg(\sum_{w\in\calR(S_{M^c})} \norm[\Big]{\FT^{-1} \Big(\sum_{u\in\calR(\sigma)}\sum_{b\in\scrQ(Q_0)}
\Theta(\xi-b-u-w) \hat{f}(\xi)\Big)(x)}_{H}^2\bigg)^r.
\end{equation}
Let $Q := Q_0 \cdot \sigma$ and note that the difference of two fractions of the form $a/Q-w$ has denominator at most $e^{\frac{1}{2}N^{\rho}}$, due to \eqref{eq:9},  thus we can write
\begin{multline*}
\sum_{u\in\calR(\sigma)}\sum_{b\in\scrQ(Q_0)}
\Theta(\xi-b-u-w)\\
=
\Big( \sum_{b\in \scrQ(Q)} \Theta(\xi-b-w)\Big)
\Big( \sum_{u\in\calR(\sigma)}\sum_{b\in\scrQ(Q_0)}
\zeta_{\epsilon_N}(\xi-b-u-w) \Big),
\end{multline*}
because the summands have disjoint supports.
The former multiplier in this product is the shift by $w\in\calR(S_{M^c})$ of the periodic multiplier
\[
\sum_{b\in \scrQ(Q)} \Theta(\xi-b).
\]
This multiplier is bounded on $\ell^{2r}(\Z^{d};H)$ with norm $\lesssim_{r}\boldA_{2r}$ by the hypothesis \eqref{eq:IW-Lp-hypothesis} and \cite[Corollary 2.1]{MR1888798} with $B_1=B_2=H$.
By Theorem~\ref{thm:MZ} with $\calZ=\calR(S_{M^c})$, we obtain
\begin{multline*}
\sum_{x\in\Z^d}\bigg(\sum_{w\in\calR(S_{M^c})} \norm[\Big]{\FT^{-1} \Big(\sum_{u\in\calR(\sigma)}\sum_{b\in\scrQ(Q_0)}
\Theta(\xi-b-u-w) \hat{f}(\xi)\Big)(x)}_{H}^2\bigg)^r\\
\lesssim_{\rho, r}
\bfA_{2r}^{2r}
\sum_{x\in\Z^d}\bigg(\sum_{w\in\calR(S_{M^c})} \norm[\Big]{\FT^{-1} \Big(\sum_{u\in\calR(\sigma)}\sum_{b\in\scrQ(Q_0)}
\zeta_{\epsilon_N}(\xi-b-u-w) \hat{f}(\xi)\Big)(x)}_H^2\bigg)^r.
\end{multline*}
We recall that the sum of this expression over $M\subseteq\N_{k}$ and $\sigma\in S_{M}$ controls the left-hand side of \eqref{eq:677} in view of \eqref{eq:162}.

\subsection{Estimates for the square functions}
Since there are only $2^{k}$ choices of subsets $M\subseteq\N_{k}$, the proof of Theorem~\ref{thm:IW:fu} will be completed if we show the following square function estimate that no longer involves the multiplier $\Theta$.
\begin{lemma}
\label{lem:sq-fct-embed}
Under the assumptions of Theorem~\ref{thm:IW:fu}, for any $M\subseteq\N_{k}$, we have
\begin{align*}
\sum_{\sigma\in S_{M}}
\sum_{x\in\Z^d}\bigg(\sum_{w\in\calR(S_{M^c})} \norm[\Big]{\FT^{-1} \Big(\sum_{u\in\calR(\sigma)}\sum_{b\in\scrQ(Q_0)}
\zeta_{\epsilon_N}(\xi-b-u-w) \hat{f}(\xi)\Big)(x)}_{H}^2\bigg)^r
\lesssim_{r}
\norm{f}_{\ell^{2r}(\Z^{d};H)}^{2r}.
\end{align*}
\end{lemma}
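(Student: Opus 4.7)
The plan is to recast the inequality as an operator norm estimate and then apply the operator-valued Magyar--Stein--Wainger sampling principle. First, by the elementary convexity inequality $\sum_{\sigma} a_\sigma^r \le \bigl(\sum_\sigma a_\sigma\bigr)^r$, valid for $r\ge 1$ and $a_\sigma\ge 0$, I would absorb the outer $\sum_\sigma$ into the square function and reduce the lemma to showing that the operator
\[
S\colon \ell^{2r}(\Z^d;H)\to \ell^{2r}(\Z^d;\mathcal H), \qquad f\mapsto \bigl(T_{\sigma,w}f\bigr)_{(\sigma,w)},
\]
is bounded with constant independent of $N$; here $\mathcal H := \ell^2_{(\sigma,w)}(H)$ with $(\sigma,w)\in\calR(S_M)\times\calR(S_{M^c})$, and $T_{\sigma,w}$ is the Fourier multiplier with symbol $m_{\sigma,w}(\xi)=\sum_{u,b}\zeta_{\epsilon_N}(\xi-b-u-w)$.

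Next, I would treat the $\ell^2$ endpoint directly. Thanks to~\eqref{eq:9} and the hypothesis $\epsilon_N\le e^{-N^\rho}$, for $N$ large enough, the bumps $\zeta_{\epsilon_N}(\cdot-b-u-w)$ have pairwise disjoint supports on $\T^d$ as $(\sigma,u,b,w)$ vary over the index set, so the operator-valued Fourier symbol $\Phi(\xi)$ of $S$ has, at each $\xi$, rank at most one and operator norm at most $\zeta_{\epsilon_N}(\xi-\cdot)\le 1$. Plancherel then gives $\|S\|_{\ell^2(\Z^d;H)\to\ell^2(\Z^d;\mathcal H)}\le 1$. To upgrade the estimate to $\ell^{2r}$ with a uniform constant, I would apply the operator-valued Magyar--Stein--Wainger sampling principle \cite[Corollary 2.1]{MR1888798} with $B_1=H$ and $B_2=\mathcal H$, which reduces the desired discrete bound to an analogous continuous bound for an operator $\tilde S\colon L^{2r}(\R^d;H)\to L^{2r}(\R^d;\mathcal H)$ whose symbol $\tilde\Phi$ is supported in a fundamental domain of $\Z^d$ in $\R^d$ (for instance $[-1/2,1/2)^d$) and agrees there with $\Phi$, so that the $\Z^d$-periodization of $\tilde\Phi$ recovers $\Phi$.

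The main obstacle is to establish the continuous bound $\|\tilde S\|_{L^{2r}(\R^d;H)\to L^{2r}(\R^d;\mathcal H)}\lesssim_r 1$ uniformly in $N$. I plan to handle this by splitting $\tilde S$ bump by bump: for each bump center $v=b+u+w$, the corresponding piece is a modulated convolution with a kernel bounded uniformly in $L^1(\R^d)$ (by rescaling $\check\zeta$), so it is uniformly bounded on $L^{2r}(\R^d;H)$ via Young's inequality. Combining this with the vector-valued Marcinkiewicz--Zygmund theorem (Theorem~\ref{thm:MZ}) applied to the single scalar multiplier $g\mapsto \FT^{-1}(\zeta_{\epsilon_N}\hat g)$, and exploiting that each bump lands in its own $(\sigma,w)$-coordinate of $\mathcal H$, converts the operator norm into a square function whose $L^{2r}$ bound can be extracted. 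The delicate point is that a naive application of Marcinkiewicz--Zygmund produces a factor of order $|\mathcal V|^{1/2}$ proportional to the square root of the number of bumps; controlling it requires exploiting the precise arithmetic structure of the centers, namely that they all lie on a common refined lattice $\frac{1}{Q}\Z^d$ with $Q$ equal to a suitable least common multiple of $Q_0$, $\sigma$ and the denominators in $\calR(S_{M^c})$, and then using Plancherel-type cancellation on the finite quotient $\Z^d/Q\Z^d$ to absorb this growth.
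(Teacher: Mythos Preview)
Your first move—replacing $\sum_{\sigma\in S_M}a_\sigma^r$ by $(\sum_\sigma a_\sigma)^r$ and thereby pushing $\sigma$ into the Hilbert-space fiber $\mathcal H=\ell^2_{(\sigma,w)}(H)$—is precisely where the argument goes wrong. The inequality you reduce to is genuinely stronger than the lemma, and its $r=\infty$ endpoint is already false. Take $d=1$, $M=\{1\}$, $M^c=\emptyset$, $S_1=\Prime\cap(N/2,N]$; the dual formulation of $\sup_x\sum_p\|T_{p}f(x)\|^2\lesssim\|f\|_{\ell^\infty}^2$ asks for $\|\sum_p\alpha_p\FT^{-1}m_p\|_{\ell^1}\lesssim1$ whenever $\sum_p|\alpha_p|^2=1$, but the choice $\alpha_p=\pi^{-1/2}$ with $\pi=\#S_1$ gives $\|\sum_p\alpha_p\FT^{-1}m_p\|_{\ell^1}\gtrsim\sqrt{N/\log N}$, since $\FT^{-1}m_p(x)=K(x)Q_0\ind{Q_0\mid x}(p\ind{p\mid x}-1)$ and the bulk of the $\ell^1$ mass sits on $x$ with no prime factor in $(N/2,N]$. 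So the operator $S$ you define is not bounded $\ell^\infty(\Z^d;H)\to\ell^\infty(\Z^d;\mathcal H)$ uniformly in $N$, and any attempt to reach $\ell^{2r}$ by interpolation or by the vague ``Plancherel-type cancellation on $\Z^d/Q\Z^d$'' (an $L^2$ device) cannot succeed for $r>1$.

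The paper avoids this by keeping $\sigma$ in the \emph{base} measure space, viewing the left-hand side as the $2r$-th power of an operator $\ell^{2r}(\Z^d;H)\to\ell^{2r}(S_M\times\Z^d;\ell^2(\calR(S_{M^c});H))$, and interpolating between $r=1$ (Plancherel plus disjoint supports) and $r=\infty$. At $r=\infty$ one now needs only $\sup_\sigma\sup_x\sum_w\|\cdot\|_H^2\lesssim\|f\|_{\ell^\infty}^2$, which \emph{is} true: for fixed $\sigma$, one writes $\sum_{u\in\calR(\sigma)}=\sum_{\kappa\in\{0,1\}^m}(-1)^{|\kappa|}\sum_{u\in\scrQ(\prod p_j^{\gamma_j-\kappa_j})}$, so that for each $\kappa$ the bump centers run over a \emph{complete} residue system modulo some $Q\le Q_0 N^D$; this forces the inverse Fourier transform onto the sparse lattice $Q\Z^d$, after which the factorization $\zeta=\phi*\psi$ and a single Cauchy--Schwarz give \eqref{eq:180}. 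None of these ingredients—the placement of $\sigma$, the M\"obius-type reduction to complete residues, or the $\phi*\psi$ splitting—appear in your plan, and they are exactly what absorbs the would-be $\sqrt{|\mathcal V|}$ growth that you identify but do not control.
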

\begin{proof}
The claim can be viewed as the statement that a certain operator maps $\ell^{2r}(\Z^{d};H)$ to $\ell^{2r}(S_{M} \times \Z^{d};\ell^{2}(\calR(S_{M^{c}});H))$.
By interpolation, it suffices to consider $r=1$ and $r=\infty$.
For $r=1$ the claim follows from Plancherel's theorem since the functions $\zeta_{\epsilon_N}(\xi-b-u-w)$ are disjointly supported for all $b\in \scrQ(Q_0)$, $w\in\calR(S_{M^{c}})$, $u\in\calR(\sigma)$, and $\sigma \in S_{M}$.

In the case $r=\infty$ we have to show that
\begin{align*}
\sup_{\sigma\in S_{M}}\sup_{x\in\Z^{d}}\sum_{w\in\calR(S_{M^{c}})}\norm[\Big]{\FT^{-1}\Big(\sum_{u\in\calR(\sigma)}\sum_{b\in\scrQ(Q_{0})}
\zeta_{\epsilon_N}(\xi-b-u-w)\hat{f}(\xi)\Big)(x)}_{H}^2
\lesssim
\norm{f}_{\ell^{\infty}(\Z^{d};H)}^{2}.
\end{align*}
By translation invariance it suffices to consider $x=0$, and by duality it is enough to show
\begin{align}
\label{eq:144}
\norm[\Big]{\FT^{-1}\Big(\sum_{w\in\calR(S_{M^c})}\alpha(w)\sum_{u\in\calR(\sigma)}\sum_{b\in\scrQ(Q_{0})} \zeta_{\epsilon_N}(\xi-b-u-w)\Big)}_{\ell^1(\Z^{d};H)}
\lesssim
1
\end{align}
for any sequence $(\alpha(w): w\in\calR(S_{M^c})) \subseteq H$ such that
\begin{align}
\label{eq:50}
\sum_{w\in\calR(S_{M^c})} \norm{\alpha(w)}_H^2=1.
\end{align}
Recall $\sigma = p_{1}^{\gamma_{1}} \dotsm p_{m}^{\gamma_{m}}$ with distinct prime numbers $p_{1},\dotsc,p_{m}$ that do not divide $Q_{0}$ and $\gamma_{j}\geq 1$.
It follows that
\[
\sum_{u\in\calR(\sigma)}
=
\sum_{\kappa\in\Set{0,1}^{m}} (-1)^{\abs{\kappa}} \sum_{u \in \scrQ(\prod_{j=1}^m p_{j}^{\gamma_{j}-\kappa_{j}})}.
\]
It suffices to obtain an estimate for a fixed $\kappa\in\Set{0, 1}^m$.
By the Chinese remainder theorem, we have to show
\begin{equation*}
\norm[\Big]{\FT^{-1}\Big(\sum_{w\in\calR(S_{M^c})}\alpha(w)\sum_{b\in\scrQ(Q)}\zeta_{\epsilon_N}(\xi-b-w)\Big)}_{\ell^1(\Z^{d};H)}
\le
C_{\rho, d},
\end{equation*}
where $Q=Q_{0} \cdot \prod_{j=1}^m p_{j}^{\gamma_{j}-\kappa_{j}}$, so that in particular $Q\leq Q_{0} N^{D}$ and $Q$ and $S_{M^c}$ are coprime.
In fact, the inequality from the last display is equivalent to the following inequality
\begin{equation}
\label{eq:180}
Q^d\norm[\Big]{\FT^{-1}\Big(\sum_{w\in\calR(S_{M^c})}\alpha(w)\zeta_{\epsilon_N}(\xi-w)\Big)}_{\ell^1(Q\Z^{d};H)}
\le
C_{\rho, d},
\end{equation}
since
\[
\sum_{b\in\scrQ(Q)}e(b\cdot x)=
\begin{cases}
Q^d, & \text{ if } x\equiv 0 \mod Q,\\
0, &\text{ otherwise}.
\end{cases}
\]
Recall $ \zeta_{\epsilon_N}(\xi) = \epsilon_N^{-d} \phi_{\epsilon_N} * \psi_{\epsilon_N}(\xi)$, where $\phi_{\epsilon_N}(\xi)=\phi(\epsilon_N^{-1}\xi)$ and $\psi_{\epsilon_N}(\xi)=\psi(\epsilon_N^{-1}\xi)$. Thus
\[
\sum_{w\in\calR(S_{M^c})}\alpha(w) \zeta_{\epsilon_N}(\xi-w)
=
\epsilon_N^{-d} \psi_{\epsilon_N}*\Big(\sum_{w\in\calR(S_{M^c})}\alpha(w)\phi_{\epsilon_N}(\cdot-w)\Big)(\xi).
\]
We have
\begin{align}
\label{eq:181}
Q^d\epsilon_N^{-d} \norm{\calF^{-1}\psi_{\epsilon_N}}_{\ell^{2}(Q\Z^{d})}
\lesssim
(Q/\epsilon_N)^{d/2},
\end{align}
whereas by Plancherel's theorem and \eqref{eq:50} we get
\begin{align}
\label{eq:184}
\begin{split}
\norm[\Big]{\FT^{-1}\Big(\sum_{w\in\calR(S_{M^c})}\alpha(w)\phi_{\epsilon_N}(\xi-w)\Big)}_{\ell^2(Q\Z^{d};H)}
&=
Q^{-d}\norm[\Big]{\sum_{w\in\calR(S_{M^c})}\alpha(w)\phi_{\epsilon_N Q}(\xi-Q w)}_{L^2(\T^{d};H)}\\
&\lesssim
Q^{-d}\Big((\epsilon_N Q)^{d} \sum_{w\in\calR(S_{M^c})} \norm{\alpha(w)}_{H}^{2} \Big)^{1/2}\\
&\lesssim
(\epsilon_N /Q)^{d/2}.
\end{split}
\end{align}
In the first inequality in \eqref{eq:184}, we have used the
disjointness of the supports of the functions
$\phi_{\epsilon_N Q}(\xi-Q w)$, and only the diagonal terms survived,
since, for any $w_1, w_2\in\calR(S_{M^c})$ with $w_1\neq w_2$, we have
$\abs{w_1- w_2}_{\infty} \geq N^{-2D} \geq 2\epsilon_N$, provided that $N$ is large enough.

The claim \eqref{eq:180} now
follows because the Fourier transform intertwines convolution with the
pointwise product, and by the Cauchy--Schwarz inequality the $\ell^1$
norm in \eqref{eq:180} is controlled by the product of $\ell^2$ norms
in \eqref{eq:181} and \eqref{eq:184}.
\end{proof}


\section{Jump estimates for discrete operators of Radon type: general theory}
\label{sec:discrete}

First we set up notation and terminology, which will be also used in Section~\ref{sec:disrad}. We will apply the results from the previous sections with $d=\abs{\Gamma}$, where
\[
\Gamma=\Set{(\gamma_1, \dotsc, \gamma_k)\in\N_0^k \given 0 < \abs{\gamma} := \abs{\gamma_1}+\dotsb+\abs{\gamma_k}\le d_0}
\]
for some $d_0\in\N$.
We work in the Euclidean space $\R^{\Gamma}$ with coordinates labeled by multi-indexes $\gamma\in\Gamma$, and similarly for $\Z^{\Gamma}$.
Let $I$ be the identity matrix of size $\abs{\Gamma} \times \abs{\Gamma}$ and let $A$ be the diagonal $\abs{\Gamma} \times \abs{\Gamma}$ matrix such that $(Av)_{\gamma} = \abs{\gamma} v_{\gamma}$.
Let
\begin{align*}
\frakq_*(\xi) := \max_{\gamma\in\Gamma}\big(\abs{\xi_{\gamma}}^{\frac{1}{\abs{\gamma}}}\big), \quad
\text{for}\quad \xi\in\R^{\Gamma}
\end{align*}
be the quasi-norm associated with $A^*=A$.

We will be working with a family of convolution operators $(T_t)_{ t\ge0}$
satisfying conditions \ref{MF:A}, \ref{MF:B}, \ref{MF:C}, and \ref{MF:D} 
from Definition~\ref{def:1}. We think of the operator $T_{t}$ as having scale $2^{t}$.  

\begin{definition}
\label{def:1}
For every $t\geq 0$, let $K_{2^t}: \Z^{\Gamma}\to\C$ be an absolutely summable function.
Consider the corresponding convolution operators
\[
T_tf(x)=K_{2^t}*f(x), \quad \text{for} \quad x\in \Z^{\Gamma}
\]
and multipliers
\[
m_t(\xi)=\widehat{K_{2^t}}(\xi), \quad \text{for} \quad \xi\in \T^{\Gamma}.
\]
Suppose that we are given the following objects.
\begin{enumerate}
\item A function $G: \bigcup_{q\in\N}(A_q/q) \to \C$ and numbers $\delta>0$ and $0<C_{\delta}<\infty$ so that for every $q\in \N$ and $a\in A_q$ we have
\begin{align}
\label{eq:G-decay}
\abs{G(a/q)}\le C_{\delta}q^{-\delta}.
\end{align}
\item A family of multipliers $\Phi_t: \R^{\Gamma} \to \C$ indexed by $t\ge0$ such that for every $p\in(1, \infty)$ and every function $f\in L^2(\R^{\Gamma})\cap L^p(\R^{\Gamma})$ we have the jump estimate
\begin{equation}
\label{eq:Phi-jump}
J_{p}^{2}(\FT^{-1}(\Phi_{t}\FT f) : \R^{\Gamma} \times [0,\infty)\to\C)
\lesssim_{p}
\norm{f}_{L^p(\R^{\Gamma})}
\end{equation}
and, for every increasing sequence $0 \le t_{1} < t_{2} < \dotsb$, the square function estimate
\begin{equation}
\label{eq:Phi-square}
\norm[\Big]{ \big( \sum_{j\in\N} \abs{\FT^{-1}((\Phi_{t_{j+1}}-\Phi_{t_{j}})\FT f)}^{2} \big)^{1/2} }_{L^p(\R^{\Gamma})}
\lesssim_p
\norm{f}_{L^p(\R^{\Gamma})},
\end{equation}
and such that the decay condition
\begin{equation}
\label{eq:Phi-decay}
\abs{\Phi_{t_2}(\xi)-\Phi_{t_1}(\xi)} \lesssim \abs{2^{tA}\xi}^{-\epsilon}, 
\end{equation}
holds for every $0<t\le t_1\le t_2\le t+1$ and $\xi\in\R^{\Gamma}\setminus\{0\}$.
\item A number $\chi\in(0, 1)$.
\end{enumerate}
Suppose that the following conditions hold.
\begin{enumerate}[label=(\textbf{\Alph*})]
\item\label{MF:A}
For every $0 < \tau \leq 1$ and every $n\in\N$,
\begin{align}
\label{eq:109}
\norm{V^1(K_{2^{t}}: t \in [n^{\tau}, (n+1)^{\tau}])}_{\ell^1(\Z^{\Gamma})}
\lesssim n^{\tau-1}.
\end{align}
\item\label{MF:B}
For every $\alpha>0$, there exist $0<\beta = \beta(\alpha) < \infty$ and $0<C_{\alpha}<\infty$ such that, for every $N\in\N$, every multi-index $\gamma_0\in\Gamma$, every integers $a, q$ such that $0\le a < q$, $(a, q) = 1$, and
\begin{align}
\label{eq:113}
N^\beta \leq q \leq 2^{N \abs{\gamma_0}} N^{-\beta},
\end{align}
and every $\xi\in\T^{\Gamma}$ with
\[
\abs[\big]{\xi_{\gamma_0} - \frac{a}{q}}
\leq
\frac{1}{q^2},
\]
we have
\begin{align}
\label{eq:114}
\sup_{N \leq t_1, t_2 \leq N+1}|m_{t_1}(\xi)-m_{t_2}(\xi)|
\leq C_{\alpha}
N^{-\alpha}.
\end{align}
\item\label{MF:C}
For every $\alpha,\beta>0$ there exists $0<C_{\alpha,\beta} < \infty$ such that, for every $N\in\N$, every $1 \leq q < (N+1)^{\beta}$, every $a\in A_{q}$, and every $\xi\in\T^{\Gamma}$ with $\abs{\xi_{\gamma}-a_{\gamma}/q} \leq 2^{-N\abs{\gamma}+N^{\chi}}$ for all $\gamma\in\Gamma$, we have
\begin{align}
\label{eq:110}
\begin{split}
\sup_{N \leq t_1, t_2 \leq N+1}\abs[\big]{m_{t_1}(\xi)-m_{t_2}(\xi)&-G(a/q)\big(\Phi_{t_1}(\xi-a/q)-\Phi_{t_2}(\xi-a/q)\big)}\\
& \leq C_{\alpha,\beta}
N^{-\alpha}.
\end{split}
\end{align}
\item\label{MF:D}
There is a family of multipliers $\tilde{m}_{N}$, indexed by $N\in\N$, that are uniformly bounded on $\ell^{p}(\Z^{\Gamma})$ for all $p\in(1, \infty)$.
Moreover, for every $\alpha>0$ there exists $0<C_{\alpha} < \infty$ such that, for every $N\in\N$, every $1 \leq q \leq e^{N^{\chi/5}}$, every $a \in A_{q}$, and every $\xi\in\T^{\Gamma}$ with $\abs{\xi_{\gamma} - a_{\gamma}/q} \leq 2^{-N\abs{\gamma} + N^{\chi}}$ for all $\gamma\in\Gamma$, we have
\begin{equation}
\label{eq:104}
\abs{\tilde{m}_{N}(\xi)-G(a/q)}
\leq C_{\alpha}
N^{-\alpha}.
\end{equation}
\end{enumerate}
\end{definition}

\begin{theorem}
\label{thm:4}
Suppose that $(T_t)_{ t\ge0}$ is a family of convolution operators satisfying conditions \ref{MF:A}, \ref{MF:B}, \ref{MF:C}, and \ref{MF:D} from Definition~\ref{def:1}.
Then, for every $p\in(1, \infty)$, there is $0<C_{p}<\infty$ such that, for every $f\in \ell^p(\Z^{\Gamma})$, we have
\begin{equation}
\label{eq:95}
J^{p}_{2}((T_tf)_{t\ge0} : \Z^{\Gamma} \to\C)
\le C_{p}
\norm{f}_{\ell^p(\Z^{\Gamma})}.
\end{equation}
In particular, for every $p\in(1, \infty)$ and $r\in(2, \infty]$, there is $0<C_{p, r}<\infty$ such that
\begin{align}
\label{eq:118}
\norm{\sup_{t\ge0}\abs{T_tf}}_{\ell^p(\Z^{\Gamma})}
&
\le C_{p, \infty}\norm{f}_{\ell^p(\Z^{\Gamma})},\\
\label{eq:119}
\norm{V^{r}(T_tf: t\ge0)}_{\ell^p(\Z^{\Gamma})}
&\le C_{p, r}
\norm{f}_{\ell^p(\Z^{\Gamma})},
\end{align}
for every $f\in \ell^p(\Z^{\Gamma})$.
\end{theorem}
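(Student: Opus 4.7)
The plan is to decompose the jump quasi-seminorm along the time variable, handle short variations via condition \ref{MF:A}, reduce the long variations to a multi-frequency approximation via the circle method using conditions \ref{MF:B}, \ref{MF:C} and \ref{MF:D}, and then estimate the resulting multi-frequency multiplier by splitting the time parameter into a ``large'' and ``small'' regime, treated respectively by the sampling principle for real interpolation spaces of \cite{arxiv:1808.04592} and by the vector-valued Ionescu--Wainger theorem (Theorem~\ref{thm:IW-mult}). Once \eqref{eq:95} is proved, \eqref{eq:118} and \eqref{eq:119} follow from \eqref{eq:V-vs-jump} and real interpolation with a trivial $\ell^p$ bound.

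\textbf{Reduction to jumps along a subexponential sequence.} Fix $\tau \in (0, 1/2)$ and set $u_n := n^\tau$. The standard splitting of $J^p_2$ gives
\[
J^p_2((T_t f)_{t \geq 0}) \lesssim J^p_2((T_{u_n} f)_{n \in\N}) + \norm[\Big]{\Big( \sum_{n\in\N} V^2(T_t f : t \in [u_n, u_{n+1}])^{2}\Big)^{1/2}}_{\ell^p(\Z^\Gamma)}.
\]
The short variation term is bounded via the pointwise inequality $V^2(T_t f : t \in I)(x) \leq (V^1(K_{2^t}: t\in I) * |f|)(x)$, condition \ref{MF:A}, and Minkowski's inequality interchanging $\ell^p$ with $\ell^2$ (which is where $\tau < 1/2$ enters). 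It thus remains to bound $J^p_2((T_{u_n}f)_{n\in\N})$.

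\textbf{Circle method approximation.} For each $n$, let $N_n := \lceil u_n \rceil$. Combining \ref{MF:B}, \ref{MF:C}, and \ref{MF:D} with a smooth cutoff $\eta_N$ supported in the box $\{|\xi_\gamma| \leq 2^{-N|\gamma| + N^{\chi}}\}$, we approximate, up to an error which is summable in $n$ in operator norm on $\ell^p(\Z^\Gamma)$,
\[
m_{u_n}(\xi) \approx \sum_{a/q \in \scrU_{N_n}} G(a/q)\, \Phi_{u_n}(\xi - a/q)\, \eta_{N_n}(\xi - a/q),
\]
where $\scrU_{N}$ is the Ionescu--Wainger set associated with parameter $\rho = \chi/5$. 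Condition \ref{MF:C} produces the $\Phi$-factor on major arcs with $q \leq N^{\beta}$, condition \ref{MF:D} extends this description to $q \leq e^{N^{\chi/5}}$ via the auxiliary multiplier $\tilde m_N$, and condition \ref{MF:B} lets us discard the minor arcs.

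\textbf{Large time scales.} For the portion of the approximating multiplier concentrated on denominators $q$ which are small compared with the time $u_n$, the cutoff $\eta_{N_n}(\cdot - a/q)$ has support so narrow that the sampling principle for real interpolation spaces (\cite[Proposition~4.7, Theorem~1.7]{arxiv:1808.04592}) directly transfers the continuous jump estimate \eqref{eq:Phi-jump} for $\Phi_t$ to the corresponding $\ell^p(\Z^\Gamma)$ jump estimate, uniformly in the shift by $a/q$. Summation over $a/q$ is then absorbed by the $q^{-\delta}$ decay of $G$ supplied by \eqref{eq:G-decay}.

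\textbf{Small time scales.} For the complementary small-time portion, apply the Rademacher--Menshov inequality of \cite{MR2885959, MR3595493} to dominate the jump quasi-seminorm by a square function of dyadic block differences. Each block is a multi-frequency multiplier of the form \eqref{eq:IW-mult} whose symbol, viewed in $L(\C, \ell^2)$, is built from dyadic differences of $\Phi_t$; the hypothesis \eqref{eq:IW-Lp-hypothesis} required by Theorem~\ref{thm:IW-mult} is supplied by the continuous square function bound \eqref{eq:Phi-square} and the decay \eqref{eq:Phi-decay}. Theorem~\ref{thm:IW-mult} then yields an $\ell^p(\Z^\Gamma)$ bound with an $O(\log N_n)$ loss, which is absorbed by the subexponential sparsity of $(u_n)_{n\in\N}$ together with condition \ref{MF:A}.

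\textbf{Main obstacle.} The hardest step is the coordination of the discrete multi-frequency structure (governed by Theorem~\ref{thm:IW-mult}) with the jump quasi-seminorm $J^p_2$, which is not of the form $L^p(B)$ for a Banach space $B$. Consequently the scalar sampling theorem of \cite{MR1888798} is not directly applicable, and one must invoke the abstract sampling principle for real interpolation spaces from \cite{arxiv:1808.04592} in conjunction with the operator-valued Theorem~\ref{thm:IW-mult}. Arranging the Rademacher--Menshov square function so that the associated operator-valued symbol satisfies the vector-valued hypothesis \eqref{eq:IW-Lp-hypothesis}, and then absorbing the $O(\log N)$ loss in \eqref{eq:111} against the subexponential density of $(u_n)$---for which the improved $O(\log N)$ partition in Lemma~\ref{lem:partition-O} is essential---is the most delicate balancing act of the argument.
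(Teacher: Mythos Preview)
Your outline matches the paper's approach closely and is essentially correct. Two points would need adjustment when you write out the details.

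First, the hypotheses \ref{MF:B} and \ref{MF:C} are stated for \emph{increments} $m_{t_1}-m_{t_2}$ with $t_1,t_2\in[N,N+1]$, not for $m_t$ itself; accordingly the paper telescopes $T_{u_n}f=\sum_{0\le j<n}\FT^{-1}((m_{(j+1)^\tau}-m_{j^\tau})\hat f)$ and approximates each increment, then organizes the major arcs by a dyadic denominator scale $S\in 2^{u\N}$ rather than by individual fractions. Also, $\tau<1/2$ does not suffice for all $p$: the short-variation step uses $q=\min\{p,2\}$ and needs $q(1-\tau)>1$, so for $p$ near $1$ one must take $\tau<\tfrac12(p_0-1)$ as in \eqref{eq:74}.

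Second, and more substantively, the $O(\log N)$ loss from Theorem~\ref{thm:IW-mult} in the small-scale square function is \emph{not} absorbed by the subexponential sparsity of $(u_n)$ or by condition~\ref{MF:A}. The gain there is the Gauss-sum decay: one writes $\Lambda_j^S\tilde m_N=\Lambda_j^S(\tilde m_N\tilde\Xi_N^S)$ and uses condition~\ref{MF:D} together with \eqref{eq:G-decay} to get $\norm{\tilde m_N\tilde\Xi_N^S}_\infty\lesssim S^{-\delta}$ (Lemma~\ref{lem:tildemtildeXi}), which after interpolation with the $\ell^{p_0}$ bound yields $S^{-6\rho}$; this is what makes the sum over $N\in 2^{\N}\cap[S^{1/u},\kappa_S)$ and then over $S$ converge. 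The subexponential spacing and condition~\ref{MF:A} are used elsewhere (short variations and error terms), but not for this step.
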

Inequality \eqref{eq:95} implies inequalities \eqref{eq:118} and
\eqref{eq:119} by appealing to \cite[Lemma 2.12]{arxiv:1808.04592} and the Marcinkiewicz interpolation theorem.
From now on, for every $p\in[1, \infty]$, we shall abbreviate $\|\cdot\|_{L^p(\R^{\Gamma})}=:\|\cdot\|_{L^p}$, $\|\cdot\|_{\ell^p(\Z^{\Gamma})} =: \|\cdot\|_{\ell^p}$, and $\|\cdot\|_{L^p(\T^{\Gamma})} =: \|\cdot\|_{p}$.

By the monotone convergence theorem and standard density arguments, the estimate \eqref{eq:95} will follow if we can show
\begin{equation}
\label{eq:67}
J^{p}_{2}(T_tf : \Z^{\Gamma}\times\bbI \to\C)
\le C_{p}
\norm{f}_{\ell^p}
\end{equation}
for every finite subset $\bbI\subset [0, \infty)$ with a constant $C_{p}$ that does not depend on $\bbI$.

Fix $p\in(1, \infty)$ and chose $p_0>1$, close to $1$, such that $p\in(p_0, p_0')$.
Take $\tau\in(0, 1)$ such that
\begin{align}
\label{eq:74}
\tau<\frac{1}{2}\min\{p_0-1, 1\}.
\end{align}
Using \cite[Lemma 1.3]{MR2434308}, we split \eqref{eq:67} into a long and short $\lambda$-jumps using respectively
\begin{align}
J^{p}_{2} (T_tf : \Z^{\Gamma}\times\bbI \to\C)
&\lesssim
J^{p}_{2} (T_{n^{\tau}}f : \Z^{\Gamma} \times \N_{0}\to\C) \label{eq:long-jumps}\\
&+
\norm[\Big]{ \Big( \sum_{n\in\N_0} V^2\big( T_{t}f : t\in[n^{\tau}, (n+1)^{\tau}]\cap\bbI \big)^2 \Big)^{1/2}}_{\ell^p}. \label{eq:short-jumps}
\end{align}

\subsection{Estimate for short variations}
We repeat the argument from \cite{arxiv:1403.4085} to estimate the short variations in \eqref{eq:short-jumps}.
For each $n\in\N$, let $s_{n,0}<s_{n,1}<\dotsc<s_{n,J(n)}$ be the increasing enumeration of $[n^\tau, (n+1)^\tau]\cap\bbI$.
It follows that
\begin{align}
\label{eq:21}
\MoveEqLeft
\norm[\Big]{ \Big( \sum_{n\in\N_0} V^2\big(K_{2^t}*f: t\in[n^{\tau}, (n+1)^{\tau}]\cap\mathbb I\big)^2 \Big)^{1/2}}_{\ell^p}\\
\nonumber & \leq
\norm[\Big]{ \Big( \sum_{n\in\N_0} V^1\big(K_{2^t}*f: t\in[n^\tau, (n+1)^\tau]\big)^2 \Big)^{1/2}}_{\ell^p}\\
\nonumber & \le
\norm[\bigg]{ \bigg( \sum_{n\in\N_0} \Big(\sum_{j=1}^{J(n)} \abs{(K_{2^{s_{n, j}}}-K_{2^{s_{n, j-1}}})*f} \Big)^q \bigg)^{1/q}}_{\ell^p} &&\text{with $q=\min\{p, 2\}$}\\
\nonumber & \le
\bigg( \sum_{n\in\N_0} \Big(\sum_{j=1}^{J(n)}\norm{(K_{2^{s_{n, j}}}-K_{2^{s_{n, j-1}}})*f}_{\ell^p}\Big)^q \bigg)^{1/q} &&\text{by Minkowski's inequality}\\
\nonumber & \le
\bigg( \sum_{n\in\N_0} \Big(\sum_{j=1}^{J(n)}\norm{K_{2^{s_{n, j}}}-K_{2^{s_{n, j-1}}}}_{\ell^1}\Big)^q \bigg)^{1/q} \norm{f}_{\ell^p} &&\text{by Young convolution inequality}\\
\nonumber & \lesssim
\Big( \sum_{n\in\N_0} n^{-q(1-\tau)} \Big)^{1/q}\norm{f}_{\ell^p}
&& \text{by \eqref{eq:109}}\\
\nonumber & \lesssim
\norm{f}_{\ell^p},
\end{align}
since $q(1-\tau)>1$ by \eqref{eq:74}.
This finishes the estimate for \eqref{eq:short-jumps}.

\subsection{Major and minor arcs for long jumps}
\label{sec:major-minor-arcs}
The decomposition of the long jumps \eqref{eq:long-jumps} will involve several parameters that are chosen depending on $p\in(1, \infty)$, $\tau\in(0, 1)$ as in \eqref{eq:74}, the function $\alpha \mapsto \beta(\alpha)$ from condition~\ref{MF:B}, and $\delta>0$ from \eqref{eq:G-decay}.
We specify our choices in advance in order to make sure that all conditions that we use are compatible.

We fix $p_{0}\in(1, \infty)$ such that $p\in(p_0, p_0')$ and choose
\begin{align}
\label{eq:choice:alpha}
\alpha > \biggl( \frac1{p_{0}} - \frac12 \biggr) \cdot \biggl( \frac1{p_{0}} - \frac{1}{\min(p,p')} \biggr)^{-1},
\end{align}
so that the estimates of the form
\[
\norm{T}_{\ell^{2} \to \ell^{2}} \lesssim j^{-\alpha},
\quad
\norm{T}_{\ell^{p_{0}} \to \ell^{p_{0}}} \lesssim \log(j+2),
\quad
\norm{T}_{\ell^{p_{0}'} \to \ell^{p_{0}'}} \lesssim \log(j+2),
\]
for a linear operator $T$, can be  interpolated to obtain  $\norm{T}_{\ell^{p} \to \ell^{p}} \lesssim j^{-\tilde{\alpha}}$ with some $\tilde{\alpha} > 1$.
We also choose an integer
\begin{align}
\label{eq:choice:u}
u > \beta(\alpha/\tau) \card{\Gamma}.
\end{align}
Finally, we will use Ionescu--Wainger multipliers as constructed  in Section~\ref{sec:iw} with the parameter
\begin{align}
\label{eq:choice:rho}
\rho := \min \Bigl( \frac{\chi}{10 u}, \frac{\delta}{6 \alpha} \Bigr).
\end{align}

Let $\phi : \R\to [0,1]$ be a smooth function such that
\[
\phi(x) :=
\begin{cases}
1 & \text{ if } \abs{x} \leq 1/8,\\
0 & \text{ if } \abs{x} \geq 1/4.
\end{cases}
\]
We will use the cutoff functions $\tilde\eta(x) := \prod_{\gamma\in\Gamma} \phi(x_{\gamma})$ and $\eta(x):=\tilde\eta(2x)$, $x\in\R^{\Gamma}$.
Note for future reference that $\eta=\tilde\eta\eta$.
For $N\in (0,\infty)$, let
\[
\eta_{N}(\xi)
:=
\eta(2^{N A- N^{\chi} I}\xi)
=
\prod_{\gamma\in\Gamma} \phi(2 \cdot 2^{\abs{\gamma} N - N^{\chi}} \xi_{\gamma}).
\]

Recall the family of rational fractions $\scrU_{S}$ defined in \eqref{eq:156}.
For dyadic integers $S \in 2^{u\N}$, we define
\[
\dscrU_{S} :=
\begin{cases}
\scrU_{S}, & S = 2^{u},\\
\scrU_{S} \setminus \scrU_{S/2^{u}}, & S > 2^{u}.
\end{cases}
\]
We will use the convention that $S\in 2^{u\N}$ whenever it appears as a summation index.

Similarly to \cite{arXiv:1512.07518, MR3681393, MR3738256}, we shall exploit, for every $n\in\N$, the partition of unity
\begin{align}
\label{eq:76}
\one_{\T^{\Gamma}}(\xi) = (\one_{\T^{\Gamma}}(\xi)-\Xi_{n^{\tau}}(\xi))+\sum_{S \leq n^{\tau u}}\Xi_{n^{\tau}}^S(\xi),
\end{align}
where
\[
\Xi_{n^{\tau}}^S(\xi)
:=
\sum_{a/q\in\dscrU_{S}}\eta_{n^{\tau}}(\xi-a/q)
\]
and
\[
\Xi_{n^{\tau}}(\xi)
:=
\sum_{S \leq n^{\tau u}} \Xi_{n^{\tau}}^{S}(\xi)
=
\sum_{a/q \in \scrU_{\tilde{S}}} \eta_{n^{\tau}}(\xi-a/q),
\quad \text{with}\quad
\tilde{S} = \max (2^{u\N} \cap [1,n^{\tau u}]).
\]
Theorem~\ref{thm:IW-mult} and complex interpolation ensure that, for every $\tilde{p}\in(1, \infty)$ and every $f\in\ell^{\tilde{p}}(\Z^{\Gamma})$, we have
\begin{align}
\label{eq:22}
\begin{split}
\norm[\big]{ \FT^{-1}\big(\Xi_{n^{\tau}}\hat{f}\big) }_{\ell^{\tilde{p}}}
&\lesssim \log (n+2) \norm{f}_{\ell^{\tilde{p}}},\\
\norm[\big]{ \FT^{-1}\big(\Xi_{n^{\tau}}^S\hat{f}\big) }_{\ell^{\tilde{p}}}
&\lesssim \log (S+2) \norm{f}_{\ell^{\tilde{p}}}.
\end{split}
\end{align}
This is due to the small supports in the definition of $\Xi_{n^{\tau}}$ and $\Xi_{n^{\tau}}^S$, since for every $\gamma\in\Gamma$ we have $2^{-n^{\tau} \abs{\gamma}+n^{\tau \chi}} \leq e^{-n^{2\rho \tau u}}$ by \eqref{eq:choice:rho} for sufficiently large $n\in\N$.

Using \eqref{eq:76}, we obtain
\begin{align}
\eqref{eq:long-jumps}
&=
J^{p}_{2} \Big(\sum_{0\le j<n}\FT^{-1}((m_{(j+1)^\tau}-m_{j^\tau})\hat{f} : n\in\N_0\Big) \nonumber\\
&\lesssim J^{p}_{2}\Big(\sum_{0\le j<n}\FT^{-1}((m_{(j+1)^\tau}-m_{j^\tau})(1-\Xi_{j^{\tau}})\hat{f}): n\in\N_0\Big)\label{eq:77}\\
&+ \sum_{S\in 2^{u\N}} J^{p}_{2}\Big( \sum_{0\le j<n : S \leq j^{\tau u}} \FT^{-1}((m_{(j+1)^\tau}-m_{j^\tau})\Xi_{j^{\tau}}^S\hat{f}): n\in\N_0\Big) \label{eq:207}.
\end{align}
In the last line we have used the fact that the jump quasi-seminorm \eqref{eq:jump-space} admits an equivalent subadditive norm \cite[Corollary 2.11]{arxiv:1808.04592}.
\subsection{Minor arcs}
In order to estimate \eqref{eq:77}, we will appeal to the inequality
\begin{align}
\label{eq:276}
J^{p}_{2}((F(\cdot, n))_{n\in\N}) \leq \norm{V^1(F(\cdot, n): n\in\N)}_{L^{p}(X)} \leq \sum_{n\in\N}\norm{F(\cdot, n+1)-F(\cdot, n)}_{L^{p}(X)}.
\end{align}
Thus,
\begin{equation}
\label{eq:225}
\eqref{eq:77}
\le
\sum_{j \ge0} \norm[\big]{\FT^{-1}((m_{(j+1)^\tau}-m_{j^\tau})(1-\Xi_{j^{\tau}})\hat{f})}_{\ell^p}.
\end{equation}
The next lemma will suffice to handle this series.
Using Lemma~\ref{lem:10} and \eqref{eq:225}, we finish the estimate for \eqref{eq:77}.

\begin{lemma}
\label{lem:10}
For every $j\in\N$ and $f\in\ell^p(\Z^\Gamma)$, we have
\begin{align}
\label{eq:3}
\norm[\big]{\FT^{-1}((m_{(j+1)^\tau}-m_{j^\tau})(1-\Xi_{j^{\tau}})\hat{f})}_{\ell^p}
\lesssim
j^{-\tilde{\alpha}}\norm{f}_{\ell^p}
\end{align}
with some $\tilde{\alpha} > 1$.
\end{lemma}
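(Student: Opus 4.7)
The plan is to bound the multiplier $(m_{(j+1)^\tau} - m_{j^\tau})(1-\Xi_{j^\tau})$ separately on $\ell^{p_0}$, on $\ell^{p_0'}$, and on $\ell^2$, and then use Riesz--Thorin interpolation. Set $N := \lfloor j^\tau\rfloor$, so that both $j^\tau$ and $(j+1)^\tau$ lie in $[N, N+2]$ once $j$ is large enough.

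For the $\ell^{p_0}$ (and symmetrically $\ell^{p_0'}$) bound, condition \ref{MF:A} yields
\[
\norm{K_{2^{(j+1)^\tau}}-K_{2^{j^\tau}}}_{\ell^1}
\le
\norm{V^1(K_{2^t}:t\in[j^\tau,(j+1)^\tau])}_{\ell^1}
\lesssim j^{\tau-1},
\]
so Young's inequality makes $m_{(j+1)^\tau}-m_{j^\tau}$ a bounded convolution operator on every $\ell^{\tilde p}$ with norm $\lesssim j^{\tau-1}$. Combined with the Ionescu--Wainger estimate \eqref{eq:22}, which gives $\norm{1-\Xi_{j^\tau}}_{\ell^{p_0}\to\ell^{p_0}}\lesssim \log(j+2)$, we obtain the $\ell^{p_0}$ operator norm bounded by $\lesssim j^{\tau-1}\log(j+2)\lesssim\log(j+2)$, and similarly on $\ell^{p_0'}$.

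For the $\ell^2$ bound, by Plancherel it suffices to prove that, on $\supp(1-\Xi_{j^\tau})$, the symbol satisfies $|m_{(j+1)^\tau}(\xi)-m_{j^\tau}(\xi)|\lesssim N^{-\alpha/\tau}$, where $\alpha$ is as in \eqref{eq:choice:alpha}. Apply Dirichlet's theorem to each coordinate: for every $\gamma_0\in\Gamma$ there exist coprime $a_{\gamma_0},q_{\gamma_0}$ with $1\le q_{\gamma_0}\le 2^{N|\gamma_0|}/N^{\beta(\alpha/\tau)}$ and $|\xi_{\gamma_0}-a_{\gamma_0}/q_{\gamma_0}|\le N^{\beta(\alpha/\tau)}2^{-N|\gamma_0|}/q_{\gamma_0}$. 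If some $q_{\gamma_0}\ge N^{\beta(\alpha/\tau)}$, then the hypothesis of \ref{MF:B} holds at that $\gamma_0$, yielding $|m_{t_1}(\xi)-m_{t_2}(\xi)|\lesssim N^{-\alpha/\tau}$ for $t_1,t_2\in[N,N+1]$; a short chain through $t=N+1$ covers the case when $j^\tau$ and $(j+1)^\tau$ straddle an integer. If instead every $q_{\gamma_0}<N^{\beta(\alpha/\tau)}$, set $Q:=\lcm\{q_{\gamma_0}\}\le N^{\beta(\alpha/\tau)|\Gamma|}$ and write $a_{\gamma_0}/q_{\gamma_0}=b_{\gamma_0}/Q$; then $|\xi_\gamma-b_\gamma/Q|\le N^{\beta(\alpha/\tau)}2^{-N|\gamma|}\le \frac12 2^{-N|\gamma|+N^\chi}$ for large $j$. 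By the choice \eqref{eq:choice:u}, $Q\le N^u\le \tilde S$, so $b/Q\in\scrU_{\tilde S}$ and $\xi\in\supp\eta_{j^\tau}(\cdot-b/Q)\subseteq\supp\Xi_{j^\tau}$, contradicting $\xi\in\supp(1-\Xi_{j^\tau})$. Thus the first case always occurs and the $\ell^2$ operator norm is $\lesssim N^{-\alpha/\tau}\lesssim j^{-\alpha}$.

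Riesz--Thorin interpolation between the $\ell^2$ bound $\lesssim j^{-\alpha}$ and the $\ell^{p_0}$, $\ell^{p_0'}$ bounds $\lesssim\log(j+2)$ gives an $\ell^p$ bound $\lesssim j^{-\tilde\alpha}$ with $\tilde\alpha>1$, precisely by the choice of $\alpha$ in \eqref{eq:choice:alpha}. The principal obstacle is the dichotomy in the $\ell^2$ argument: one must verify that the numerology of $N^{\beta(\alpha/\tau)|\Gamma|}$, the threshold $\tilde S\sim j^{\tau u}$ built into $\Xi_{j^\tau}$, and the support radius $2^{-N|\gamma|+N^\chi}$ of $\eta_{j^\tau}$ are mutually compatible, which is exactly what \eqref{eq:choice:u} and \eqref{eq:choice:rho} were designed to ensure.
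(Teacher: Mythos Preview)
Your proof follows the same strategy as the paper's: obtain $\ell^{p_0}$ and $\ell^{p_0'}$ bounds with a logarithmic loss via condition~\ref{MF:A} and \eqref{eq:22}, obtain an $\ell^2$ bound $\lesssim j^{-\alpha}$ by the Dirichlet dichotomy (either some $q_{\gamma_0}$ is large and condition~\ref{MF:B} applies, or all are small and $\Xi_{j^\tau}$ kills the multiplier), and then interpolate using \eqref{eq:choice:alpha}.

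There is one slip in Case~2 of your $\ell^2$ argument. Concluding that $\xi\in\supp\eta_{j^\tau}(\cdot-b/Q)\subseteq\supp\Xi_{j^\tau}$ does \emph{not} contradict $\xi\in\supp(1-\Xi_{j^\tau})$, since these supports overlap on the transition region where $0<\Xi_{j^\tau}(\xi)<1$. What you actually need---and what your estimate $|\xi_\gamma-b_\gamma/Q|\le N^{\beta}2^{-N|\gamma|}$ does give for large $j$---is that $\xi$ lies in the \emph{plateau} of $\eta_{j^\tau}(\cdot-b/Q)$, i.e.\ $\eta_{j^\tau}(\xi-b/Q)=1$, hence $\Xi_{j^\tau}(\xi)=1$ and the full multiplier $(m_{(j+1)^\tau}-m_{j^\tau})(1-\Xi_{j^\tau})$ vanishes at $\xi$. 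With this correction the argument is complete and matches the paper.
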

\begin{proof}
It suffices to consider large values of $j$ (depending on all other parameters).
In view of \eqref{eq:109} and \eqref{eq:22}, for $q\in\Set{p_{0},p_{0}'}$, we have
\begin{align}
\label{eq:14}
\norm[\big]{\FT^{-1}((m_{(j+1)^\tau}-m_{j^\tau})(1-\Xi_{j^{\tau}})\hat{f})}_{\ell^{q}}
\lesssim
\log (j+2) \norm{f}_{\ell^{q}}.
\end{align}
By Plancherel's theorem and complex interpolation, it suffices to show
\begin{equation}
\label{eq:minor-arc-mult}
\norm{(m_{(j+1)^\tau}-m_{j^\tau})(1-\Xi_{j^{\tau}})}_{\infty} \lesssim j^{-\alpha}.
\end{equation}
Let $\beta=\beta(\alpha/\tau)$ be as in~\ref{MF:B}, $\tilde{S} := \max (2^{u\N} \cap [1,j^{\tau u}])$, and $N := \floor{j^{\tau}}$.

Let $\xi \in \T^{\Gamma}$.
By Dirichlet's principle, for every $\gamma\in\Gamma$ there exist coprime natural numbers $a_{\gamma}, q_{\gamma}$ such that $1\le q_{\gamma}\le N^{-\beta}2^{N\abs{\gamma}}$ and
\[
\abs{\xi_{\gamma}-a_{\gamma}/q_{\gamma}}
\leq
q_{\gamma}^{-1} N^{\beta}2^{-N\abs{\gamma}}
\leq
q_{\gamma}^{-2}.
\]
We distinguish two cases.
\paragraph{Case 1}
Suppose that $1\le q_{\gamma }< N^{\beta}$ for every $\gamma \in \Gamma$.
Then $q := \lcm(q_{\gamma}: \gamma\in\Gamma) \leq N^{\beta \card{\Gamma}} \leq \tilde{S}$, since $\beta \card{\Gamma} < u$ by the choice of $u$ in \eqref{eq:choice:u}, and $j$ is sufficiently large.
Hence, for some $a' \in A_{q}$, we have $(a'/q) = (a_{\gamma}/q_{\gamma})_{\gamma} \in \scrU_{\tilde{S}}$.
On the other hand,
\[
\abs{\xi_{\gamma}-a_{\gamma}'/q}
\leq
N^{\beta}2^{-N \abs{\gamma}}
\leq
2^{-N \abs{\gamma}+N^{\chi}}/16.
\]
It follows that $\Xi_{j^{\tau}}(\xi)=1$, so that the multiplier \eqref{eq:minor-arc-mult} vanishes for this value of $\xi$.

\paragraph{Case 2}
If the previous case did not occur, then, in fact, for some $\gamma\in\Gamma$ the condition \eqref{eq:113} holds, that is, $N^{\beta}\le q_{\gamma }\le N^{-\beta}2^{N \abs{\gamma}}$.
Therefore, \eqref{eq:114} applies, and we obtain
\[
\abs{(m_{(j+1)^\tau}-m_{j^\tau})(\xi)}
\lesssim
N^{-\alpha/\tau}
\simeq
j^{-\alpha}.
\]
This finishes the estimate for the multiplier \eqref{eq:minor-arc-mult} at the point $\xi$.
\end{proof}

\subsection{Major arcs}
It remains to estimate the series \eqref{eq:207}.
We will consider each summand
\begin{align}
\label{eq:264}
J^{p}_{2} \Big( \sum_{0\le j<n : S \leq j^{\tau u}} \FT^{-1}((m_{(j+1)^\tau}-m_{j^\tau})\Xi_{j^{\tau}}^S \hat{f}): n\in\N_0 \Big)
\end{align}
separately and provide estimates that are summable in $S \in 2^{u\N}$.

For this purpose we split the jump norm in \eqref{eq:264} at scale
\begin{equation}
\label{eq:kappa}
\kappa_S := 2^{\ceil{S^{2 \rho}} + C}
\end{equation}
with a large integer $C$, that is, we estimate
\begin{align}
\eqref{eq:264}
&\leq
J^{p}_{2} \Big( \sum_{S^{1/(\tau u)} \leq j<n}\FT^{-1}((m_{(j+1)^\tau}-m_{j^\tau})\Xi_{j^{\tau}}^S\hat{f}): n^{\tau} \le 2\kappa_S \Big)
\label{eq:265}\\
&+
J^{p}_{2} \Big( \sum_{\kappa_S^{1/\tau}\le j<n}\FT^{-1}((m_{(j+1)^\tau}-m_{j^\tau})\Xi_{j^{\tau}}^S\hat{f}): n^{\tau} \geq \kappa_S \Big).
\label{eq:266}
\end{align}

We begin with the definition of approximating multipliers for the respective scales.
Let
\[
\nu_j^S(\xi)
:=
\sum_{a/q\in\dscrU_{S}}G(a/q)\big(\Phi_{(j+1)^\tau}(\xi-a/q)-\Phi_{j^\tau}(\xi-a/q)\big)\eta_{j^{\tau}}(\xi-a/q).
\]
\begin{align}
\label{eq:100}
\Lambda_{j}^S(\xi) :=
\sum_{a/q\in\dscrU_{S}}\big(\Phi_{(j+1)^\tau}(\xi-a/q)-\Phi_{j^\tau}(\xi-a/q)\big)\eta_{j^{\tau}}(\xi-a/q).
\end{align}

\begin{lemma}
\label{lem:mXi-nu}
For every $S \leq j^{\tau u}$ and $N \leq j^{\tau} \leq 4N$, we have
\begin{align}
\label{eq:mXi-nu}
\norm{(m_{(j+1)^\tau}-m_{j^\tau})\Xi_{j^{\tau}}^S-\nu_j^S}_{\infty}
&\lesssim
(j+1)^{-\alpha}\\
\label{eq:mXi-tildem}
\norm{(m_{(j+1)^\tau}-m_{j^\tau})\Xi_{j^{\tau}}^S-\Lambda_{j}^S\tilde{m}_{N}}_{\infty}
&\lesssim
(j+1)^{-\alpha}.
\end{align}
\end{lemma}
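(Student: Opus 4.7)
The plan is to reduce both estimates to pointwise bounds by exploiting that the bumps $\eta_{j^\tau}(\cdot-a/q)$ have pairwise disjoint supports as $a/q$ ranges over $\dscrU_{S}$. Two distinct reduced fractions with denominators in $P_{S}$ differ by at least $(\max P_{S})^{-2}\ge e^{-2S^\rho}$, while each bump has width $\lesssim 2^{-j^\tau+j^{\tau\chi}}$; the choices $\rho<\chi/(10u)$ and $S\le j^{\tau u}$ force the former to dominate, so at most one bump is active at any $\xi\in\T^{\Gamma}$ once $j$ is large. The left-hand side of each inequality thus reduces to a single pointwise estimate on the support of the unique bump containing $\xi$.

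For \eqref{eq:mXi-nu} I split according to the size of $q\in P_{S}$. Set $\beta_{*}:=\max\{\beta(\alpha/\tau),\,\alpha/(\tau\delta)\}$, where $\beta(\cdot)$ is the function from \ref{MF:B}. If $q\le N^{\beta_{*}}$, then on the support of $\eta_{j^\tau}(\cdot-a/q)$ the hypotheses of \ref{MF:C} with parameters $\alpha/\tau$ and $\beta_{*}$ are met, and \ref{MF:C} delivers $\lesssim N^{-\alpha/\tau}\lesssim(j+1)^{-\alpha}$ at once. If instead $N^{\beta_{*}}<q\le e^{S^\rho}$, then $\rho\le\chi/(10u)$ and $S\le j^{\tau u}$ yield $q\le e^{N^{\chi/5}}$, which places $q$ in the Dirichlet range of \ref{MF:B} relative to any $\gamma_0\in\Gamma$ and ensures $\abs{\xi_{\gamma_{0}}-a_{\gamma_{0}}/q}\le q^{-2}$ on the support; hence \ref{MF:B} gives $\abs{m_{(j+1)^\tau}(\xi)-m_{j^\tau}(\xi)}\lesssim N^{-\alpha/\tau}$. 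Simultaneously, \eqref{eq:G-decay} together with the uniform $L^\infty$ bound on $\Phi_{t}$ (a consequence of \eqref{eq:Phi-jump} at $p=2$) yields $\abs{G(a/q)(\Phi_{(j+1)^\tau}-\Phi_{j^\tau})(\xi-a/q)}\lesssim q^{-\delta}\le N^{-\beta_{*}\delta}\le N^{-\alpha/\tau}$, and the triangle inequality closes the estimate.

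For \eqref{eq:mXi-tildem} I use the decomposition
\[
(m_{(j+1)^\tau}-m_{j^\tau})\Xi_{j^{\tau}}^{S}-\Lambda_{j}^{S}\tilde m_{N}=\bigl[(m_{(j+1)^\tau}-m_{j^\tau})\Xi_{j^{\tau}}^{S}-\nu_{j}^{S}\bigr]+\bigl[\nu_{j}^{S}-\Lambda_{j}^{S}\tilde m_{N}\bigr].
\]
The first bracket is handled by \eqref{eq:mXi-nu}, and the second equals
\[
\sum_{a/q\in\dscrU_{S}}\bigl(G(a/q)-\tilde m_{N}(\xi)\bigr)\bigl(\Phi_{(j+1)^\tau}(\xi-a/q)-\Phi_{j^\tau}(\xi-a/q)\bigr)\eta_{j^\tau}(\xi-a/q).
\]
The same disjoint-support reduction applies, and since $q\le e^{S^\rho}\le e^{N^{\chi/5}}$ condition \ref{MF:D} directly supplies $\abs{G(a/q)-\tilde m_{N}(\xi)}\lesssim N^{-\alpha/\tau}$ on the active support, concluding the bound by using again the uniform boundedness of $\Phi_{t}$.

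I expect the main obstacle to be parameter bookkeeping. Since denominators $q\in P_{S}$ can be as large as $e^{S^\rho}$, which is superpolynomial in $N\asymp j^\tau$, condition \ref{MF:C} alone is insufficient and must be complemented by the pair (\ref{MF:B}, \eqref{eq:G-decay}) in the high-$q$ regime; establishing that the quantitative choices \eqref{eq:choice:alpha}--\eqref{eq:choice:rho} are jointly compatible with both regimes, and absorbing the marginal mismatch between the bump support $\abs{\xi_\gamma-a_\gamma/q}\lesssim 2^{-j^\tau\abs\gamma+j^{\tau\chi}}$ and the hypothesis $\abs{\xi_\gamma-a_\gamma/q}\le 2^{-N\abs\gamma+N^\chi}$ of \ref{MF:C} and \ref{MF:D} (achieved by a marginal adjustment of $\chi$ in the definition of $\eta_{N}$), is the most delicate part of the argument.
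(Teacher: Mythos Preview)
Your proposal is correct and follows essentially the same route as the paper: reduce to a single active bump by disjoint supports, split \eqref{eq:mXi-nu} into the two cases $q<(j^\tau)^{\beta_*}$ (handled by \ref{MF:C}) and $q\ge(j^\tau)^{\beta_*}$ (handled by \ref{MF:B} for the $m$-part and \eqref{eq:G-decay} for the $G\cdot\Phi$-part), and deduce \eqref{eq:mXi-tildem} from \eqref{eq:mXi-nu} together with \ref{MF:D} applied to $\nu_j^S-\Lambda_j^S\tilde m_N$. Your closing worry about the support mismatch is unnecessary: since $N\le j^\tau$ and $x\mapsto -x\abs{\gamma}+x^\chi$ is decreasing for large $x$, one has $2^{-j^\tau\abs{\gamma}+j^{\tau\chi}}\le 2^{-N\abs{\gamma}+N^\chi}$ directly, without adjusting $\chi$.
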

\begin{proof}
It suffices to consider large $j$.
Let $a/q \in \dscrU_{S}$ be a reduced fraction, so that in particular $q \leq e^{S^{\rho}} \leq e^{j^{\tau u \rho}}$.
Let $\xi\in\T^{\Gamma}$ be such that $\eta_{j^{\tau}}(\xi-a/q)\neq 0$, then
\[
\abs{\xi_{\gamma}-a_{\gamma}/q}
\leq
2^{-j^{\tau}\abs{\gamma}+ j^{\chi\tau}}
\]
for every $\gamma\in \Gamma$.
Let $\beta = \max(\beta(\alpha/\tau),\alpha/(\delta\tau))$.
There are now two cases.
If $q < (j^{\tau})^{\beta}$, then by \eqref{eq:110} we obtain
\begin{align*}
\MoveEqLeft
\abs{((m_{(j+1)^\tau}-m_{j^\tau})\Xi_j^S-\nu_j^S)(\xi)}\\
&\lesssim
\abs{(m_{(j+1)^\tau}-m_{j^\tau})(\xi)-G(a/q)(\Phi_{(j+1)^\tau}-\Phi_{j^\tau})(\xi-a/q)}\\
&\lesssim
(j^{\tau})^{-\alpha/\tau} = j^{-\alpha}.
\end{align*}
If $q \geq (j^{\tau})^{\beta}$, then the condition \eqref{eq:113} holds for large $j$, so by \eqref{eq:114} and \eqref{eq:G-decay} we obtain
\begin{align*}
\MoveEqLeft
\abs{((m_{(j+1)^\tau}-m_{j^\tau})\Xi_j^S-\nu_j^S)(\xi)}\\
&\lesssim
\abs{(m_{(j+1)^\tau}-m_{j^\tau})(\xi)} + \abs{G(a/q)}\\
&\lesssim
(j^{\tau})^{-\alpha/\tau} + (j^{\tau\beta})^{-\delta}
\lesssim
j^{-\alpha}.
\end{align*}
This finishes the proof of \eqref{eq:mXi-nu}.
On the other hand,
\begin{align*}
\abs{(\nu_{j}^{S}-\Lambda_{j}^S\tilde{m}_{N})(\xi)}
&=
\abs{G(a/q) - \tilde{m}_{N}(\xi)} \abs{(\Phi_{(j+1)^\tau}-\Phi_{j^\tau})(\xi-a/q)} \abs{\eta_{j^{\tau}}(\xi-a/q)}\\
&\lesssim
\abs{G(a/q) - \tilde{m}_{N}(\xi)}\\
&\lesssim_{\alpha}
N^{-\alpha/\tau}
\lesssim
j^{-\alpha}
\end{align*}
by \eqref{eq:104}, since $2^{-j^{\tau} \abs{\gamma} + j^{\chi\tau}} \leq 2^{-N \abs{\gamma} + N^{\chi}}$ and $q \leq e^{N^{\chi/5}}$.
\end{proof}

\subsection{Small scales: estimate for \eqref{eq:265}}
We follow and simplify the ideas that originate in \cite{MR2188130} and in \cite{arXiv:1512.07518,MR3681393}.
The important refinement that the multiplier $\tilde{m}_{N}$ in \eqref{eq:2} and \eqref{eq:4} is not of the form \eqref{eq:Pi_s} has been independently found by Trojan \cite{arxiv:1803.05406}.

Using \cite[Lemma 2.5]{arxiv:1808.09048}, we obtain
\begin{align}
\eqref{eq:265}
&\leq
\norm[\Big]{V^{2} \Big(\sum_{S^{1/(\tau u)}\leq j<n}\FT^{-1}((m_{(j+1)^\tau}-m_{j^\tau})\Xi_{j^{\tau}}^S\hat{f}): n^{\tau}\leq 2\kappa_S \Big)}_{\ell^p} \notag\\
&\leq
\sum_{N \in 2^{\N} \cap [S^{1/u}, \kappa_{S})} \norm[\Big]{V^{2} \Big(\sum_{S^{1/(\tau u)}\leq j<n}\FT^{-1}((m_{(j+1)^\tau}-m_{j^\tau})\Xi_{j^{\tau}}^S\hat{f}): N \leq n^{\tau} \leq 4N \Big)}_{\ell^p} \notag\\
&\leq
\sum_{N \in 2^{\N} \cap [S^{1/u}, \kappa_{S})} \norm[\Big]{V^{2} \Big( \sum_{\mathrlap{S^{1/(\tau u)}\leq j<n}}\FT^{-1}(((m_{(j+1)^\tau}-m_{j^\tau})\Xi_{j^{\tau}}^S-\Lambda_{j}^{S}\tilde{m}_{N})\hat{f}): N \leq n^{\tau} \leq 4N \Big)}_{\ell^p} \notag\\
&+
\sum_{N \in 2^{\N} \cap [S^{1/u}, \kappa_{S})} \norm[\Big]{V^{2} \Big(\sum_{S^{1/(\tau u)}\leq j<n}\FT^{-1}(\Lambda_{j}^{S}\tilde{m}_{N}\hat{f}): N \leq n^{\tau} \leq 4N \Big)}_{\ell^p} \notag\\
&\lesssim
\sum_{N \in 2^{\N} \cap [S^{1/u}, \kappa_{S})} \sum_{j : N < j^{\tau} \leq 4N} \norm[\Big]{\FT^{-1}(((m_{(j+1)^\tau}-m_{j^\tau})\Xi_{j^{\tau}}^S-\Lambda_{j}^{S}\tilde{m}_{N})\hat{f})}_{\ell^p} \label{eq:2}\\
&+
\sum_{N \in 2^{\N} \cap [S^{1/u}, \kappa_{S})} \sum_{i \lesssim \log N} \norm[\Big]{\Big(\sum_{l} \abs[\big]{\sum_{j \in I^{i}_{l}}\FT^{-1}(\Lambda_{j}^{S}\tilde{m}_{N}\hat{f})}^{2} \Big)^{1/2}}_{\ell^p}, \label{eq:4}
\end{align}
where the summation in \eqref{eq:4} is taken over all  $l\ge0$ such
that the sets $I_{l}^i \subset [N^{1/\tau},(4N)^{1/\tau}] \cap \N$,
and by  \cite[Lemma 2.5]{arxiv:1808.09048}  we know that the sets $I_{l}^i$  are pairwise disjoint intervals of length at most $2^{i}$.

\subsubsection{Error terms}
We handle \eqref{eq:2} by the following lemma.
\begin{lemma}
\label{lem:14}
If $S < j^{\tau u}$ and $N \leq j^{\tau} \leq 4N$, then
\begin{align}
\label{eq:274}
\norm[\big]{\FT^{-1}\big(((m_{(j+1)^\tau}-m_{j^\tau})\Xi_{j^{\tau}}^{S} - \Lambda_{j}^{S} \tilde{m}_{N})\hat{f}\big)}_{\ell^p(\Z^{\Gamma})}
\lesssim
(j+1)^{-\tilde{\alpha}}\norm{f}_{\ell^p(\Z^{\Gamma})}
\end{align}
with some $\tilde{\alpha} > 1$.
\end{lemma}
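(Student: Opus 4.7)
The plan is to establish the bound \eqref{eq:274} by interpolation between an $\ell^{2}$ estimate with polynomial decay and $\ell^{p_{0}}$, $\ell^{p_{0}'}$ estimates with only logarithmic growth, exploiting the choice of the exponent $\alpha$ in \eqref{eq:choice:alpha}.

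First, for the $\ell^{2}$ bound, apply Plancherel and the uniform multiplier estimate \eqref{eq:mXi-tildem} from Lemma~\ref{lem:mXi-nu}, obtaining an operator norm bound of order $(j+1)^{-\alpha}$. Second, for the $\ell^{q}$ bounds with $q\in\{p_{0},p_{0}'\}$, use the triangle inequality to split the multiplier into the two pieces $(m_{(j+1)^\tau}-m_{j^\tau})\Xi_{j^{\tau}}^S$ and $\Lambda_j^S\tilde m_N$. For the first piece, factor the operator as the convolution with $K_{2^{(j+1)^{\tau}}}-K_{2^{j^{\tau}}}$ composed with the Ionescu--Wainger projection $\FT^{-1}(\Xi_{j^{\tau}}^{S}\hat f)$; by Young's inequality together with \eqref{eq:109} the convolution has norm $\lesssim j^{\tau-1}$, and by \eqref{eq:22} the projection has norm $\lesssim \log(S+2)\lesssim \log(j+2)$. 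For the second piece, $\tilde m_N$ is uniformly bounded on $\ell^{q}$ by hypothesis \ref{MF:D}, so it suffices to bound $\Lambda_j^S$. This is a multiplier of the form \eqref{eq:IW-mult} with the scalar-valued function $\Theta(\xi):=(\Phi_{(j+1)^{\tau}}-\Phi_{j^{\tau}})(\xi)\eta_{j^{\tau}}(\xi)$; its support lies in $\epsilon_{S}\bfQ$ by the choice \eqref{eq:choice:rho} of $\rho$ (which ensures $2^{-j^{\tau}\abs{\gamma}+j^{\chi\tau}}\le e^{-S^{\rho}}$ for the relevant range of $S$), and its $L^{q}(\R^{\Gamma})$ operator norm is bounded uniformly in $j$ thanks to the square function estimate \eqref{eq:Phi-square} (specialised to a two-element sequence) combined with the Schwartz cutoff $\eta_{j^{\tau}}$. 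Theorem~\ref{thm:IW-mult} then yields $\norm{\FT^{-1}(\Lambda_j^S\hat f)}_{\ell^{q}}\lesssim \log(S+2)\norm{f}_{\ell^{q}}\lesssim \log(j+2)\norm{f}_{\ell^{q}}$.

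Combining the two contributions gives the $\ell^{q}$ bound $\lesssim \log(j+2)$ for $q\in\{p_{0},p_{0}'\}$. Interpolating this with the $\ell^{2}$ bound of size $(j+1)^{-\alpha}$, and invoking the choice of $\alpha$ in \eqref{eq:choice:alpha}, we obtain an $\ell^{p}$ bound $\lesssim (j+1)^{-\tilde\alpha}$ for some $\tilde\alpha>1$, which is exactly \eqref{eq:274}.

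The main obstacle is the careful verification of the support and $L^{q}(\R^{\Gamma})$ operator-norm hypotheses required to invoke Theorem~\ref{thm:IW-mult} for $\Lambda_{j}^{S}$; in particular, one must check that the cutoff $\eta_{j^{\tau}}$ is small enough with respect to $\epsilon_{S}=e^{-S^{\rho}}$, which is precisely why the parameter $\rho$ was chosen as in \eqref{eq:choice:rho}, so that $\rho u<\chi/10$. Aside from this bookkeeping, everything else follows from hypotheses already assembled in Definition~\ref{def:1} and the estimates prepared in the preceding lemmas.
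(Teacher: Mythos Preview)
Your proposal is correct and follows essentially the same approach as the paper: an $\ell^{2}$ bound via Plancherel and \eqref{eq:mXi-tildem}, $\ell^{p_{0}}$/$\ell^{p_{0}'}$ bounds of size $\log(S+1)\lesssim\log(j+2)$ obtained by treating the two pieces separately via \eqref{eq:22} and Theorem~\ref{thm:IW-mult}, and then interpolation using \eqref{eq:choice:alpha}. The paper's write-up is terser (it simply invokes Theorem~\ref{thm:IW-mult} for $\Lambda_{j}^{S}$ without spelling out the $L^{q}(\R^{\Gamma})$ and support verifications you describe), but the argument is the same.
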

\begin{proof}
For any $p_0\in(1, \infty)$, and in particular for the $p_{0}$ chosen above \eqref{eq:choice:alpha}, we have
\begin{align}
\label{eq:284}
\norm[\big]{\FT^{-1}\big(((m_{(j+1)^\tau}-m_{j^\tau})\Xi_{j}^S-\Lambda_{j}^S\tilde{m}_{N})\hat{f}\big)}_{\ell^{p_0}}
\lesssim
\log (S+1) \norm{f}_{\ell^{p_0}},
\end{align}
since by Theorem~\ref{thm:IW-mult}
\begin{align*}
\norm[\big]{\FT^{-1}\big((m_{(j+1)^\tau}-m_{j^\tau})\Xi_{j^{\tau}}^S\hat{f}\big)}_{\ell^{p_0}}
&\lesssim
\norm[\big]{\FT^{-1}\big(\Xi_{j^{\tau}}^S\hat{f}\big)}_{\ell^{p_0}}
\lesssim
\log (S+1) \norm{f}_{\ell^{p_0}},\\
\norm[\big]{\FT^{-1}\big(\Lambda_{j}^S\tilde{m}_{N}\hat{f}\big)}_{\ell^{p_0}}
&\lesssim
\norm[\big]{\FT^{-1}\big(\Lambda_{j}^S\hat{f}\big)}_{\ell^{p_0}}
\lesssim
\log (S+1) \norm{f}_{\ell^{p_0}}.
\end{align*}
Interpolation with the $\ell^{2}$ estimate coming from \eqref{eq:mXi-tildem} and Plancherel's theorem finishes the proof.
\end{proof}
It follows from Lemma~\ref{lem:14} that
\[
\eqref{eq:2}
\lesssim
\sum_{N \in 2^{\N} : N \geq S^{1/u}} \sum_{j : N < j^{\tau} \leq 4N} (j+1)^{-\tilde{\alpha}} \norm{f}_{\ell^p}
\lesssim
S^{(1-\tilde{\alpha})/(\tau u)} \norm{f}_{\ell^p},
\]
and this is summable in $S$, since the exponent is strictly negative.

\subsubsection{Square functions}
We now  estimate \eqref{eq:4}.
To this end it suffices to prove the following.
\begin{lemma}
\label{lem:dis-sq-fct}
\[
\norm[\Big]{\Big(\sum_{l} \abs[\big]{\sum_{j \in I^{i}_{l}}\FT^{-1}(\Lambda_{j}^{S}\tilde{m}_{N}\hat{f})}^{2} \Big)^{1/2}}_{\ell^p}
\lesssim
S^{-5\rho} \norm{f}_{\ell^{p}}.
\]
\end{lemma}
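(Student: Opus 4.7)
The plan is to reduce the estimate via the approximation of Lemma~\ref{lem:mXi-nu} and then interpolate between a sharp $\ell^{2}$ bound (which produces the $S^{-\delta}$ decay from the Gauss-sum factor $G$) and a crude $\ell^{p_{0}}$ bound (with only a logarithmic loss from the vector-valued Ionescu--Wainger theorem). First, using \eqref{eq:mXi-tildem}, I would replace $\Lambda_{j}^{S}\tilde m_{N}$ by $\nu_{j}^{S}$ at the cost of a multiplier error of size $O((j+1)^{-\alpha})$; its contribution to the square function is controlled by Minkowski's inequality, the $\log S$-loss of Theorem~\ref{thm:IW-mult}, and the continuous square function estimate \eqref{eq:Phi-square}, giving a bound summable in $j$ that beats $S^{-5\rho}$ by the choice of $\alpha$.

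For the main term $\sum_{j\in I_{l}^{i}}\nu_{j}^{S}$, I would carry out the $\ell^{2}$-estimate by Plancherel, using that the bumps $\eta_{j^{\tau}}(\cdot-a/q)$ for distinct $a/q\in\dscrU_{S}$ have pairwise disjoint supports (guaranteed by $\rho\leq\chi/(10u)$ combined with $N\geq S^{1/u}$). Only the diagonal $a/q$-terms survive, so one may extract $|G(a/q)|^{2}\lesssim S^{-2\delta}$ since $q\geq S/2^{u}$ for $a/q\in\dscrU_{S}$, and the remaining quantity $\sum_{l}|\theta_{l}(\xi-a/q)|^{2}$, where $\theta_{l}(\xi):=\sum_{j\in I_{l}^{i}}(\Phi_{(j+1)^{\tau}}-\Phi_{j^{\tau}})(\xi)\eta_{j^{\tau}}(\xi)$, is controlled uniformly by the $L^{2}$ case of \eqref{eq:Phi-square} applied to a joint enumeration of the endpoints of the disjoint intervals $I_{l}^{i}$. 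This yields the $\ell^{2}$-bound of order $S^{-\delta}$.

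For the $\ell^{p_{0}}$-bound, I would absorb $\tilde m_{N}$ via the vector-valued Marcinkiewicz--Zygmund theorem (Theorem~\ref{thm:MZ}), which applies thanks to the uniform $\ell^{p_{0}}$-boundedness of $\tilde m_{N}$ from condition \ref{MF:D}, reducing the matter to bounding the unweighted square function $\norm{(\sum_{l}|\FT^{-1}(\sum_{j\in I_{l}^{i}}\Lambda_{j}^{S}\hat f)|^{2})^{1/2}}_{\ell^{p_{0}}}$. Applying Theorem~\ref{thm:IW-mult} with $H_{0}=\C$, $H_{1}=\ell^{2}$ (indexed by $l$) and the operator-valued symbol $\Theta(\xi)=(\theta_{l}(\xi))_{l}$ transfers the continuous $L^{p_{0}}\to L^{p_{0}}(\ell^{2})$ bound (provided by \eqref{eq:Phi-square}) to the discrete setting with a $\log S$ loss. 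Interpolating the $\ell^{p_{0}}$-bound $\lesssim\log S$ with the $\ell^{2}$-bound $\lesssim S^{-\delta}$ produces an $\ell^{p}$-bound $\lesssim(\log S)^{\theta}S^{-\delta(1-\theta)}$ for appropriate $\theta\in(0,1)$ depending on $p$; the choices \eqref{eq:choice:alpha} and $\rho\leq\delta/(6\alpha)$ from \eqref{eq:choice:rho} guarantee $\delta(1-\theta)>5\rho$, yielding the desired decay.

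I expect the main obstacle to lie in the fact that the cutoffs $\eta_{j^{\tau}}$ depend on $j$, so the sum $\sum_{j\in I_{l}^{i}}(\Phi_{(j+1)^{\tau}}-\Phi_{j^{\tau}})\eta_{j^{\tau}}$ does not telescope cleanly to $\Phi_{(\max I_{l}^{i}+1)^{\tau}}-\Phi_{(\min I_{l}^{i})^{\tau}}$. Consequently, \eqref{eq:Phi-square} cannot be applied to $\theta_{l}$ verbatim; rather, one has to perform summation by parts (or replace $\eta_{j^{\tau}}$ by a single $\eta_{N}$ at a common scale $N\sim j^{\tau}$ uniform over $j\in I_{l}^{i}$) and invoke the Phi-decay condition \eqref{eq:Phi-decay} to absorb the resulting discrepancy into an error term which is itself controlled by \eqref{eq:Phi-square} applied to a refined telescoping sequence.
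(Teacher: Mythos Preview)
Your overall strategy—extracting $S^{-\delta}$ from the Gauss sum at $\ell^2$ and interpolating against a $\log S$ bound at $\ell^{p_0}$—is the right idea, but the detour through $\nu_j^S$ creates a genuine gap. The problem is the error term $\Lambda_j^S\tilde m_N-\nu_j^S$: you claim its $\ell^{p_0}$ contribution is $O(\log S)$ via Theorem~\ref{thm:IW-mult}, but $\nu_j^S(\xi)=\sum_{a/q\in\dscrU_S}G(a/q)\,\Theta_j(\xi-a/q)$ is \emph{not} of Ionescu--Wainger form, since the coefficient $G(a/q)$ varies with the rational point. Theorem~\ref{thm:IW-mult} requires a single $\Theta$, so it does not apply to $\nu_j^S$, and the only crude $\ell^{p_0}$ bound available is $\abs{\scrU_S}\lesssim e^{(\abs{\Gamma}+1)S^{\rho}}$ (as in Lemma~\ref{lem:12}). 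After interpolation and summation over $j$ with $j^\tau\sim N\sim S^{1/u}$, this sub-exponential factor is not beaten by any polynomial saving in $j$, so the error bound you claim does not follow. The same obstruction prevents you from obtaining a clean $\ell^{p_0}$ bound for the main term $\nu_j^S$ directly, which is why you were forced back to $\Lambda_j^S\tilde m_N$ in the third paragraph; but then you are interpolating bounds for two \emph{different} operators.

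The paper avoids this entirely by a factorisation rather than a splitting: since $\eta=\tilde\eta\eta$ and $j^\tau\ge N$, one has $\Lambda_j^S\tilde m_N=\Lambda_j^S\cdot(\tilde m_N\tilde\Xi_N^S)$. Now the decay is isolated in the \emph{scalar} multiplier $\tilde m_N\tilde\Xi_N^S$, whose $\ell^p$ norm is $\lesssim S^{-6\rho}$ by interpolating the $\ell^2$ bound $S^{-\delta}$ of Lemma~\ref{lem:tildemtildeXi} (which uses \eqref{eq:G-decay} and \eqref{eq:104}) against the $\ell^{p_0}$ bound $\log S$ (both factors being of IW type). One then composes with the square function for $\Lambda_j^S$ alone, which \emph{is} of IW form and hence bounded on $\ell^p$ by $\log S$ via the vector-valued Theorem~\ref{thm:IW-mult} together with \eqref{eq:Phi-square} and \eqref{eq:trim-cont-mult} (the latter disposes of the $j$-dependent cutoffs $\eta_{j^\tau}$, exactly the obstacle you anticipated). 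The product $\log S\cdot S^{-6\rho}\lesssim S^{-5\rho}$ gives the lemma. The key point is that the interpolation happens on the scalar factor, not on the square-function operator, so no error term of non-IW type ever appears.
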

Indeed, assuming Lemma~\ref{lem:dis-sq-fct} we obtain
\[
\eqref{eq:4}
\lesssim
\sum_{N \in 2^{\N} \cap [S^{1/u}, \kappa_{S})} \sum_{i \lesssim \log N} S^{-5\rho} \norm{f}_{\ell^p}
\lesssim
(\log \kappa_{S})^{2} S^{-5\rho} \norm{f}_{\ell^p},
\]
and this is summable in $S \in 2^{u\N}$ with the choice of $\kappa_{S}$ in \eqref{eq:kappa}.

Let
\[
\tilde{\Xi}_{N}^S(\xi)
:=
\sum_{a/q\in\dscrU_{S}}\tilde{\eta}_N(\xi-a/q).
\]

\begin{lemma}
\label{lem:tildemtildeXi}
For $S^{1/u} \leq N$, we have
\[
\norm{\tilde{m}_{N}\tilde{\Xi}_{N}^S}_{\infty}
\lesssim
S^{-\delta}.
\]
\end{lemma}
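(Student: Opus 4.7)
The plan is to exploit the facts that $\tilde\Xi_N^S$ is a sum of bump functions localized around reduced fractions $a/q$ with controlled denominators, and that on each such bump the multiplier $\tilde m_N$ is approximated by $G(a/q)$, which enjoys the decay $|G(a/q)| \lesssim q^{-\delta}$.

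First I would argue that the supports of the summands $\tilde\eta_N(\xi-a/q)$, $a/q\in\dscrU_S$, are pairwise disjoint. Indeed any $a/q\in\dscrU_S$ satisfies $q\in P_S$, hence $q\le e^{S^\rho}$ by \eqref{eq:IW-denom-bound}, while $\tilde\eta_N(\cdot-a/q)$ is supported in the box where $|\xi_\gamma-a_\gamma/q|\le 2^{-|\gamma|N+N^\chi}/8$ for every $\gamma\in\Gamma$. For two distinct fractions $a/q,a'/q'\in\scrU_S$ the separation $|a/q-a'/q'|\ge (qq')^{-1}\ge e^{-2S^\rho}$, together with $N\ge S^{1/u}$ and the choice $\rho\le\chi/(10u)$ from \eqref{eq:choice:rho}, yields $2^{-|\gamma|N+N^\chi}\ll e^{-2S^\rho}$ for $N$ large, so the supports are disjoint. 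Hence at any $\xi$ where $\tilde\Xi_N^S(\xi)\ne 0$ exactly one term contributes, and $|\tilde\Xi_N^S(\xi)|\le 1$.

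Next I would verify the hypotheses of condition~\ref{MF:D} for this unique $a/q$. The denominator bound $q\le e^{S^\rho}$ combined with $N\ge S^{1/u}$ and $\rho\le\chi/(10u)$ gives $q\le e^{S^\rho}\le e^{N^{u\rho}}\le e^{N^{\chi/10}}\le e^{N^{\chi/5}}$. The frequency condition $|\xi_\gamma-a_\gamma/q|\le 2^{-|\gamma|N+N^\chi}$ is immediate from the support of $\tilde\eta_N(\cdot-a/q)$. Applying \ref{MF:D} with the parameter $\alpha$ there chosen as large as we wish (say $\alpha:=2\delta u$) produces
\[
|\tilde m_N(\xi)-G(a/q)|\le C_\alpha N^{-2\delta u}\le C_\alpha S^{-2\delta}.
\]

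Then I would extract the size of $q$ from the cutoff $\dscrU_S$. When $S>2^u$ the defining relation $\dscrU_S=\scrU_S\setminus\scrU_{S/2^u}$ forces $q\notin P_{S/2^u}$, and since $\N_{S/2^u}\subseteq P_{S/2^u}$ by \eqref{eq:IW-denom-bound} this means $q>S/2^u$. Hence by \eqref{eq:G-decay}
\[
|G(a/q)|\le C_\delta q^{-\delta}\lesssim S^{-\delta}.
\]
For the base case $S=2^u$ the bound $S^{-\delta}$ is just a constant, so combining $|G(a/q)|\le C_\delta$ with the approximation above suffices. Putting the two estimates together on the support of $\tilde\eta_N(\cdot-a/q)$ gives
\[
|\tilde m_N(\xi)\tilde\Xi_N^S(\xi)|
\le \bigl(|\tilde m_N(\xi)-G(a/q)|+|G(a/q)|\bigr)|\tilde\eta_N(\xi-a/q)|
\lesssim S^{-\delta},
\]
which is the desired estimate. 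The main subtle points are tracking that the parameters $\rho$, $u$, $\chi$ fixed in \eqref{eq:choice:u}--\eqref{eq:choice:rho} are compatible with both the hypothesis $q\le e^{N^{\chi/5}}$ of \ref{MF:D} and the lower bound $q\gtrsim S$ coming from the definition of $\dscrU_S$; once these compatibilities are in place the proof is essentially a pointwise triangle-inequality argument.
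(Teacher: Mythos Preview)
Your proof is correct and follows essentially the same approach as the paper: localize to a single bump using disjointness of supports, apply condition~\ref{MF:D} to approximate $\tilde m_N(\xi)$ by $G(a/q)$, and use the lower bound $q>S/2^u$ from the definition of $\dscrU_S$ together with \eqref{eq:G-decay}. The only minor difference is that the paper disposes of small $S$ (where disjointness may fail) uniformly by invoking the $L^\infty$ bound on $\tilde m_N$ coming from its uniform $\ell^2$-boundedness, whereas you treat only the base case $S=2^u$ explicitly; this is harmless since only finitely many $S$ are affected.
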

\begin{proof}
Since by the hypothesis the multipliers $\tilde{m}_{N}$ are uniformly bounded on $L^{2}$, we clearly have a uniform estimate in $N$ for each fixed $S$.
Hence we may assume that $S$ is so large that the functions $\tilde{\eta}_{N}(\cdot-a/q)$ have disjoint support for $a/q\in\dscrU_{S}$.

Let $\xi\in\T^{\Gamma}$ be such that $\tilde{\Xi}_{N}^{S}(\xi) \neq 0$.
Then there exists $a/q \in \dscrU_{S}$ with
\[
\abs{\xi_{\gamma} - {a_{\gamma}}/{q}}
\leq
2^{-N\abs{\gamma}+N^{\chi}}
\]
for every $\gamma\in\Gamma$.
In particular, $S/2^{u} < q \leq e^{S^{\rho}} \leq e^{N^{\chi/10}}$.
We estimate
\[
\abs{(\tilde{m}_{N}\tilde{\Xi}_{N}^S)(\xi)}
\leq
\abs{\tilde{m}_{N}(\xi)-G(a/q)}
+
\abs{G(a/q)}.
\]
Using \eqref{eq:G-decay}, we estimate the second term by $|G(a/q)|\lesssim q^{-\delta}\lesssim S^{-\delta}$.
Using \eqref{eq:104} with $\alpha = \delta  u$, we also estimate the first term by $S^{-\delta}$.
\end{proof}

\begin{proof}[Proof of Lemma~\ref{lem:dis-sq-fct}]
Since $\Lambda_{j}^S\tilde{m}_{N}=\Lambda_{j}^S\tilde{m}_{N}\tilde{\Xi}_{N}^S$, it suffices to show that
\begin{align}
\label{eq:269}
\norm[\Big]{\Big(\sum_{l} \abs[\big]{\sum_{j \in I^{i}_{l}}\FT^{-1}(\Lambda_{j}^{S}\hat{f})}^{2} \Big)^{1/2}}_{\ell^p}
\lesssim
\log (S) \norm{f}_{\ell^p},
\end{align}
and
\begin{align}
\label{eq:270}
\norm[\big]{\FT^{-1}(\tilde{m}_{N}\tilde{\Xi}_{N}^S\hat{f})}_{\ell^p}
\lesssim
S^{-6\rho} \norm{f}_{\ell^p},
\end{align}
where the implicit constants are independent of $S$ and $N$.

By Theorem~\ref{thm:IW-mult}, the estimate \eqref{eq:269} is a consequence of its continuous counterpart
\begin{align*}
\norm[\bigg]{ \bigg(\sum_{l}\abs[\Big]{\sum_{j\in I_l^i}\FT^{-1}((\Phi_{(j+1)^\tau}-\Phi_{j^\tau})\eta_{j^{\tau}}\FT{f})}^2\Bigg)^{1/2}}_{L^p}
\lesssim \norm{f}_{L^p}.
\end{align*}
Indeed, by the hypothesis \eqref{eq:Phi-square}, we have the square function estimate
\begin{align*}
\norm[\bigg]{ \bigg(\sum_{l}\abs[\Big]{\sum_{j\in I_l^i}\FT^{-1}((\Phi_{(j+1)^\tau}-\Phi_{j^\tau})\FT{f})}^2\Bigg)^{1/2}}_{L^p}
\lesssim \norm{f}_{L^p},
\end{align*}
whereas the error term can be handled by the inequality
\begin{equation}
\label{eq:trim-cont-mult}
\sum_{j\ge0}\norm[\big]{\FT^{-1}((\Phi_{(j+1)^\tau}-\Phi_{j^\tau})(1-\eta_{j^{\tau}})\FT{f})}_{L^p}\lesssim \norm{f}_{L^p}
\end{equation}
that holds for every $p\in (1,\infty)$, and in particular for $p$ fixed at the beginning of Section~\ref{sec:major-minor-arcs}.
Indeed, it is easy to obtain uniform $L^{p}$ estimates for the $j$-th term and a quickly decaying $L^{2}$ estimate follows from \eqref{eq:Phi-decay}.

We now prove \eqref{eq:270}.
By Theorem~\ref{thm:IW-mult} and the hypothesis \ref{MF:D}, we get
\begin{align}
\label{eq:281}
\norm[\big]{\FT^{-1}(\tilde{m}_{N}\tilde{\Xi}_{N}^S\hat{f})}_{\ell^{p_0}}
\lesssim
\log (S) \norm{f}_{\ell^{p_0}}.
\end{align}
The claim follows by interpolation with the $\ell^{2}$ estimate provided by Lemma~\ref{lem:tildemtildeXi} and Plancherel's theorem using \eqref{eq:choice:rho} and \eqref{eq:choice:alpha}.
\end{proof}

This completes the estimate for \eqref{eq:265}.

\subsection{Large scales: estimate for \eqref{eq:266}}
We have
\begin{align}
\eqref{eq:266}
&\lesssim
J^{p}_{2} \Big( \sum_{\kappa_S^{1/\tau} \le j<n}\FT^{-1}(\nu_j^S\hat{f}): n^{\tau} \geq \kappa_S \Big)^{1/2} \label{eq:5}\\
&+
\norm[\Big]{\sum_{j^{\tau} \geq \kappa_S} \abs[\big]{\FT^{-1}(((m_{(j+1)^\tau}-m_{j^\tau})\Xi_{j^{\tau}}^S-\nu_{j}^{S})\hat{f})} }_{\ell^p}. \label{eq:6}
\end{align}

\subsubsection{Error terms}
We estimate \eqref{eq:6} by the following result.

\begin{lemma}
\label{lem:12}
Suppose $S \leq j^{\tau u}$.
Then
\begin{align}
\label{eq:290}
\norm[\big]{\FT^{-1}\big(((m_{(j+1)^\tau}-m_{j^\tau})\Xi_{j^{\tau}}^S-\nu_j^S)\hat{f}\big)}_{\ell^p}
\lesssim
e^{(\card{\Gamma}+1) S^{\rho}} (j+1)^{-\tilde{\alpha}} \norm{f}_{\ell^p}
\end{align}
with some $\tilde{\alpha} > 1$.
\end{lemma}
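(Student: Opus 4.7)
The plan is to prove Lemma~\ref{lem:12} by Riesz--Thorin interpolation between an $\ell^2$ estimate with polynomial decay in $j$ coming from the pointwise bound \eqref{eq:mXi-nu}, and crude $\ell^{p_0}$, $\ell^{p_0'}$ estimates whose $S$-dependence is only through the cardinality $\card{\dscrU_S}\lesssim e^{(\card{\Gamma}+1)S^\rho}$. The choice of $\alpha$ in \eqref{eq:choice:alpha} is tailored so that the interpolated decay exponent $\alpha(1-\theta_0)$ exceeds $1$.

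For the $\ell^{2}$ estimate, combining \eqref{eq:mXi-nu} with Plancherel's theorem gives
\[
\norm[\big]{\FT^{-1}\big(((m_{(j+1)^\tau}-m_{j^\tau})\Xi_{j^\tau}^S-\nu_j^S)\hat f\big)}_{\ell^2}
\lesssim
(j+1)^{-\alpha}\norm{f}_{\ell^2}.
\]

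Next, I would bound each of the two pieces separately on $\ell^{p_0}$ (and symmetrically on $\ell^{p_0'}$) by a triangle inequality over $\dscrU_S$. For $(m_{(j+1)^\tau}-m_{j^\tau})\Xi_{j^\tau}^S$, factor it as convolution by the kernel $K_{2^{(j+1)^\tau}}-K_{2^{j^\tau}}$ of $\ell^1$ norm $\lesssim(j+1)^{\tau-1}\lesssim 1$ (by \eqref{eq:109}) composed with $\Xi_{j^\tau}^S=\sum_{a/q\in\dscrU_S}\eta_{j^\tau}(\cdot-a/q)$. Each summand is a modulation of $\eta_{j^\tau}$, whose support sits strictly inside $[-1/2,1/2]^{\card{\Gamma}}$; by modulation invariance of the operator norm and the Magyar--Stein--Wainger sampling principle, the $\ell^{p_0}$ multiplier norm of each summand equals the $L^{p_0}(\R^\Gamma)$ multiplier norm of $\eta_{j^\tau}$, which is $O(1)$ uniformly in $j$ by dilation invariance. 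For $\nu_j^S$ the same triangle inequality, together with $\abs{G(a/q)}\lesssim 1$ and uniform $L^{p_0}(\R^\Gamma)$ boundedness of $(\Phi_{(j+1)^\tau}-\Phi_{j^\tau})\eta_{j^\tau}$ (which follows from \eqref{eq:Phi-jump}), gives the same bound via sampling. Combined with \eqref{eq:UN-size}, this produces
\[
\norm[\big]{\FT^{-1}\big(((m_{(j+1)^\tau}-m_{j^\tau})\Xi_{j^\tau}^S-\nu_j^S)\hat f\big)}_{\ell^{q}}
\lesssim
e^{(\card{\Gamma}+1)S^\rho}\norm{f}_{\ell^{q}},
\qquad q\in\{p_0,p_0'\}.
\]

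Finally, Riesz--Thorin interpolation between the $\ell^q$ estimate (with $q=p_0$ if $p\le 2$, otherwise $q=p_0'$) and the $\ell^2$ estimate yields
\[
\norm[\big]{\FT^{-1}\big(((m_{(j+1)^\tau}-m_{j^\tau})\Xi_{j^\tau}^S-\nu_j^S)\hat f\big)}_{\ell^p}
\lesssim
e^{(\card{\Gamma}+1)S^\rho}(j+1)^{-\tilde\alpha}\norm{f}_{\ell^p}
\]
with $\tilde\alpha:=\alpha(1/p_0-1/\min(p,p'))/(1/p_0-1/2)>1$ by \eqref{eq:choice:alpha}; the factor $(e^{(\card{\Gamma}+1)S^\rho})^{\theta_0}$ is absorbed into $e^{(\card{\Gamma}+1)S^\rho}$ since the interpolation parameter $\theta_0\in(0,1)$. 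The only mildly delicate point is securing the uniform-in-$j$ $\ell^{p_0}$ bound for each single-frequency discrete multiplier, but modulation invariance and sampling reduce this to a classical $L^{p_0}(\R^\Gamma)$ multiplier estimate for a dilate of a smooth bump, so there is no real obstacle once the crude cardinality bound \eqref{eq:UN-size} is invoked.
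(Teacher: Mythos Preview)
Your proof is correct and follows essentially the same strategy as the paper: an $\ell^2$ bound via \eqref{eq:mXi-nu} and Plancherel, a crude $\ell^{p_0}$ (and $\ell^{p_0'}$) bound of size $\abs{\scrU_S}\lesssim e^{(\card{\Gamma}+1)S^\rho}$ obtained by treating each fraction in $\dscrU_S$ separately, and Riesz--Thorin interpolation with $\tilde\alpha>1$ secured by \eqref{eq:choice:alpha}. The paper's proof is more terse (``considering each fraction in $\dscrU_S$ individually and invoking \eqref{eq:UN-size}''), whereas you spell out the single-frequency $\ell^{p_0}$ bounds via modulation invariance and the Magyar--Stein--Wainger sampling principle; one small remark is that the uniform $L^{p_0}$ boundedness of $(\Phi_{(j+1)^\tau}-\Phi_{j^\tau})\eta_{j^\tau}$ is most cleanly read off from the square-function hypothesis \eqref{eq:Phi-square} (taking a single pair $t_1<t_2$) rather than from the jump estimate \eqref{eq:Phi-jump}.
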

\begin{proof}
Considering each fraction in $\dscrU_{S}$ individually and invoking \eqref{eq:UN-size} with $d=|\Gamma|$, we obtain for every $p_0\in(1, \infty)$ the estimate
\begin{align}
\label{eq:291}
\norm[\big]{\FT^{-1}\big(((m_{(j+1)^\tau}-m_{j^\tau})\Xi_{j^{\tau}}^S-\nu_j^S)\hat{f}\big)}_{\ell^{p_0}}
\lesssim
\abs{\scrU_{S}} \norm{f}_{\ell^{p_0}}
\lesssim
e^{(\card{\Gamma}+1) S^{\rho}} \norm{f}_{\ell^{p_0}}.
\end{align}
On the other hand, by \eqref{eq:mXi-nu} and Plancherel's theorem, we have
\begin{align}
\label{eq:292}
\norm[\big]{\FT^{-1}\big(((m_{(j+1)^\tau}-m_{j^\tau})\Xi_{j^{\tau}}^S-\nu_j^S)\hat{f}\big)}_{\ell^2}
\lesssim
(j+1)^{-\alpha} \norm{f}_{\ell^2}.
\end{align}
Interpolation between \eqref{eq:291} and \eqref{eq:292} finishes the proof of \eqref{eq:290}.
\end{proof}
It follows that
\[
\eqref{eq:6}
\lesssim
e^{(\card{\Gamma}+1) S^{\rho}} \sum_{j^{\tau} \geq \kappa_{S}} j^{-\tilde{\alpha}} \norm{f}_{\ell^{p}}
\lesssim
e^{(\card{\Gamma}+1) S^{\rho}} \kappa_{S}^{(1-\tilde{\alpha})/\tau} \norm{f}_{\ell^{p}},
\]
and this is summable in $S$ by the choice of $\kappa_{S}$ in \eqref{eq:kappa}.

\subsubsection{Jumps}
In order to estimate \eqref{eq:5}, we factorize
\[
\sum_{\kappa_{S}^{1/\tau}\le j<n}\nu_j^S(\xi) = \Big(\sum_{b\in\scrQ(Q_S)}\Psi_n(\xi-b)\Big) \cdot \Pi^{S}(\xi),
\]
where
\begin{align*}
Q_S
:=
\lcm P_{S}
\leq
3^{S},\qquad  \text{by \eqref{eq:IW-denom-lcm}}.
\end{align*}
\begin{equation}
\label{eq:Pi_s}
\Pi^{S}(\xi)
:=
\sum_{a/q\in\dscrU_{S}}G(a/q)\tilde\eta_{\kappa_{S}}(\xi-a/q).
\end{equation}
\[
\Psi_n(\xi)
:=
\sum_{\kappa_{S}^{1/\tau}\le j<n}\big(\Phi_{(j+1)^\tau}(\xi)-\Phi_{j^\tau}(\xi)\big)\eta_{j^{\tau}}(\xi).
\]
Notice that the latter function is supported on the cube centered at the origin with the side length $2^{-\kappa_{S}+\kappa_{S}^{\chi}}$ that is $\leq (4Q_{S})^{-1}$ provided that $C$ is chosen sufficiently large in \eqref{eq:kappa}.

Hence, it suffices to show that
\begin{align}
\label{eq:7}
J^{p}_{2} \Big(\FT^{-1}\Big(\sum_{b\in\scrQ(Q_S)}\Psi_n(\cdot-b) \hat{f}\Big): n^{\tau} \geq \kappa_S \Big)
\lesssim
\norm{f}_{\ell^{p}},
\end{align}
and, for some $\varepsilon>0$, that
\begin{align}
\label{eq:279}
\norm[\big]{\FT^{-1}(\Pi^{S}\hat{f})}_{\ell^p}
\lesssim
S^{-\varepsilon} \norm{f}_{\ell^p}.
\end{align}
To see \eqref{eq:7}, we invoke the sampling principle for the jumps  from \cite[Theorem 1.7]{arxiv:1808.04592}, and we use our assumption \eqref{eq:Phi-jump}, which provides the desired bounds for the trimmed multipliers $\Psi_n$.

To see \eqref{eq:279} notice that it holds for $p=2$ by \eqref{eq:G-decay} and Plancherel's theorem.
Therefore, it suffices to obtain a uniform (in $S$) estimate for $p=p_{0}$.
To this end, let $N=\kappa_{S}^{1/u}$ and split
\begin{equation}
\label{eq:PiS-split}
\Pi^{S} = \tilde\Pi^{S} \tilde{m}_{\kappa_{S}} + (\Pi^{S} - \tilde\Pi^{S} \tilde{m}_{\kappa_{S}}),
\end{equation}
where
\[
\tilde\Pi^{S}(\xi)
:=
\sum_{a/q\in\dscrU_{S}}\tilde\eta_{\kappa_{S}}(\xi-a/q).
\]
The former term in \eqref{eq:PiS-split} defines a bounded multiplier on $\ell^{p_{0}}$ by hypothesis \ref{MF:D} and \cite[Corollary 2.1]{MR1888798}.
For the latter term we proceed in a similar way as in \eqref{eq:mXi-tildem} and obtain
\[
\norm{\Pi^{S} - \tilde\Pi^{S} \tilde{m}_{\kappa_{S}}}_{\infty}
\lesssim
\kappa_{S}^{-\alpha/\tau}.
\]
On the other hand, we also have an $\ell^{\tilde{p}}$ estimate similar to \eqref{eq:291} for every $\tilde{p}\in(1, \infty)$.
Interpolating these estimates we obtain that the family of multipliers $\Pi^{S} - \tilde\Pi^{S} \tilde{m}_{\kappa_{S}}$ is uniformly bounded on $\ell^{p_{0}}$.


\section{Applications to operators of Radon type on $\Z^d$}
\label{sec:disrad}

This section is intended to prove Theorem~\ref{thm:discrete-jump}, which will be a consequence of Theorem~\ref{thm:4}. We will study discrete averaging Radon transform $M_t$ and truncated singular Radon transform $H_t$ in $\Z^{\Gamma}$ with the set of multi-indices $\Gamma$ as in the previous section. Generally, we will follow the notation used in Section~\ref{sec:discrete} and assume that $d=\abs{\Gamma}$. 

Later on, the averaging Radon transform $M_t$ and the truncated singular Radon transform $H_t$ will be thought of as convolution operators having scale $2^t$. Namely, for any finitely supported function $f: \Z^{\Gamma} \to \C$, any $x\in\Z^{\Gamma}$, and any $t\ge0$ we have
\[
M_tf(x)=K_{2^t}^{M}*f(x)\qquad \text{ and } \qquad H_tf(x)=K_{2^t}^{H}*f(x)
\]
with the kernels
\begin{align}
\label{eq:88}
K_{2^t}^{M}(x)=\frac{1}{\abs{\Omega_{2^t}\cap\Z^k}}\sum_{y \in \Omega_{2^t}\cap\Z^k} \delta_{(y)^{\Gamma}}(x)
\quad \text{ and } \quad
K_{2^t}^{H}(x)=\sum_{y \in \Omega_{2^t}\cap\Z^k\setminus\{0\}}\delta_{(y)^{\Gamma}}(x) K(y),
\end{align}
where $(y)^{\Gamma}$ is the canonical polynomial and $K : \R^{k}\setminus\Set{0} \to \C$ is a  Calder\'on--Zygmund kernel satisfying conditions \eqref{eq:size-unif}, \eqref{eq:cancel} and \eqref{eq:K-modulus-cont}.

The condition \eqref{eq:K-modulus-cont} could be replaced by a Dini type condition like in \cite{arxiv:1808.09048}.
However, we will not pursue this direction.

For any Schwartz function $f$ in $\R^{\Gamma}$, for every $x\in\R^{\Gamma}$ and $t\in\R$, we define respectively the continuous averaging Radon transform and truncated singular Radon transform
by setting
\begin{align}
\label{eq:125}
\begin{split}
\calM_{t} f(x)&:= \frac{1}{\meas{\Omega_{2^t}}} \int_{\Omega_{2^t}}
f(x-\stPol{y}) \dif y,\\
\calH_{t} f(x)& := {\rm p.v.}\int_{\Omega_{2^t}} f(x-\stPol{y}) K(y) \dif y,
\end{split}
\end{align}
where $K$
is a Calder\'on--Zygmund kernel satisfying
\eqref{eq:size-unif}, \eqref{eq:cancel}, and \eqref{eq:K-modulus-cont}.

The operators $M_t$ and $H_t$ are discrete counterparts of the continuous Radon operators defined in \eqref{eq:125}.
Let us stress that the continuous Radon transforms play an important role in the proof of Theorem \ref{thm:discrete-jump}.
We now state Theorem \ref{thm:12} for operators \eqref{eq:125}, which was recently proved in \cite{arxiv:1808.09048}  and will be used to verify condition \eqref{eq:Phi-jump}.
\begin{theorem}
\label{thm:12}
Let $\calT_t$ be either $\calM_t$ or $\calH_t$.
Then for every $p\in(1, \infty)$ there is $0<C_{p}<\infty$ such that, for every $f\in L^p(\R^{\Gamma})$, we have
\begin{equation}
\label{eq:173}
\sup_{\lambda>0} \norm{\lambda N_{\lambda}(\calT_tf: t\in\R)^{1/2}}_{L^p(\R^{\Gamma})}
\le C_{p}
\norm{f}_{L^p(\R^{\Gamma})}.
\end{equation}
In particular, \eqref{eq:173} implies that, for every $p\in(1, \infty)$ and $r\in(2, \infty]$, there is $0<C_{p, r}<\infty$ such that
\begin{align}
\label{eq:236}
\norm{\sup_{t\in\R}\abs{\calT_tf}}_{L^p(\R^{\Gamma})}&\le C_{p, \infty}\norm{f}_{L^p(\R^{\Gamma})},\\
\label{eq:237}
\norm{V^{r}(\calT_tf: t\in\R)}_{L^p(\R^{\Gamma})}
&\le C_{p, r}
\norm{f}_{L^p(\R^{\Gamma})},
\end{align}
for every $f\in L^p(\R^{\Gamma})$.
\end{theorem}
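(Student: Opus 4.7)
My plan is to prove Theorem~\ref{thm:12} by the now-classical strategy of decoupling the continuous parameter $t\in\R$ into ``long'' jumps along a dyadic sub-sequence and ``short'' $2$-variations over short intervals, and then attacking the long part by comparison with a Littlewood--Paley/martingale-type model for which the jump inequality is easy. Concretely, I would first invoke \cite[Lemma 1.3]{MR2434308} to bound
\[
\sup_{\lambda>0}\norm{\lambda N_\lambda(\calT_tf:t\in\R)^{1/2}}_{L^p(\R^{\Gamma})}
\lesssim
J^p_2(\calT_{n}f: n\in\Z)
+
\norm[\Big]{\Big(\sum_{n\in\Z} V^2(\calT_tf: t\in[n,n+1])^2\Big)^{1/2}}_{L^p(\R^{\Gamma})},
\]
thereby reducing matters to a long-jump inequality on the integers and a short $2$-variation square function.

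For the short-variation piece, I would dominate $V^2$ by $V^1$ on each unit interval and use the fact that $\partial_t\calT_tf$ has integrable kernel at unit scale (thanks to the H\"older condition \eqref{eq:K-modulus-cont} in the singular case and to $\partial\Omega$ being Lipschitz in the averaging case), so that the sum over $n$ telescopes through a Calder\'on--Zygmund-style $g$-function controlled by a Hardy--Littlewood maximal operator. This gives the $L^p$ bound for every $p\in(1,\infty)$ without touching jumps directly.

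For the long jumps $J^p_2(\calT_{n}f: n\in\Z)$, I would compare $\calT_n$ with an approximating family $A_n$ obtained by convolving with a smooth bump adapted to the non-isotropic dilations $\delta_{2^n}$ on $\R^{\Gamma}$ induced by $\stPol{\cdot}$; such $A_n$ can be realized as (or dominated by) a continuous martingale-type averaging, whose jump inequality is Lépingle's theorem. The error $\calT_n-A_n$ I would control by the square function estimate
\[
\norm[\Big]{\Big(\sum_{n\in\Z}\abs{(\calT_n-A_n)f}^2\Big)^{1/2}}_{L^p(\R^{\Gamma})}\lesssim \norm{f}_{L^p(\R^{\Gamma})},
\]
which at $p=2$ follows from Plancherel together with the Fourier-side decay of the symbol of $\calT_n-A_n$ away from its natural frequency annulus (a van der Corput/oscillatory integral bound for the polynomial map $y\mapsto\stPol{y}$ in the singular case, and the Fourier transform decay of $\ind{\Omega}$ in the averaging case), and then extends to all $p\in(1,\infty)$ by a vector-valued Calder\'on--Zygmund argument using the H\"older regularity of the associated kernels.

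The main technical obstacle I anticipate is the step just indicated, namely the $L^p$-boundedness of the above square function for $p$ away from $2$: one needs Calder\'on--Zygmund estimates for the vector-valued kernel $(K_n-a_n)_{n\in\Z}$ that are uniform in $n$, which in the Radon setting requires the non-isotropic scaling $\delta_{2^n}$ to be woven into a genuine space of homogeneous type on $\R^{\Gamma}$. Once this is achieved, assembling the long-jump estimate from the Lépingle bound for $A_n$ and the above square function via the triangle inequality for $J^p_2$ (see the equivalence with a subadditive norm in \cite[Corollary 2.11]{arxiv:1808.04592}) yields \eqref{eq:173}; the remaining conclusions \eqref{eq:236} and \eqref{eq:237} are routine consequences via \cite[Lemma 2.12]{arxiv:1808.04592} and Marcinkiewicz interpolation, exactly as remarked after Theorem~\ref{thm:4}.
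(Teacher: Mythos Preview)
The paper does not give its own proof of Theorem~\ref{thm:12}: immediately before the statement it says ``which was recently proved in \cite{arxiv:1808.09048}'', and thereafter only quotes the companion paper's \cite[Theorems~2.28 and~2.39]{arxiv:1808.09048} to explain how the related square function estimate~\eqref{eq:8} follows. So there is no in-house argument to compare against; your outline is essentially a reconstruction of the Jones--Seeger--Wright scheme \cite{MR2434308} as carried out in the cited companion paper, and in that sense it matches the intended proof rather than diverging from it.

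Your long-jump step (compare $\calT_n$ to a smooth dilation family $A_n$, invoke L\'epingle for $A_n$, and absorb the error by a square function bounded at $p=2$ via van der Corput and pushed to general $p$ by non-isotropic vector-valued Calder\'on--Zygmund theory) is the correct template, and you have correctly identified that the last step is the genuine technical work. One caveat on the short-variation piece: for $\calM_t$ the body $\Omega$ is only convex, so $\partial_t\calM_t$ is not a convolution with an $L^1$ kernel but with a surface measure on the image of $\partial\Omega_{2^t}$ under $y\mapsto\stPol{y}$; ``integrable kernel at unit scale'' should be read as ``finite total-variation measure'', and the $g$-function you propose then needs the cancellation $\partial_t\calM_t 1=0$ together with Littlewood--Paley theory adapted to the anisotropic dilations (this is exactly what \cite[Theorem~2.39]{arxiv:1808.09048} packages). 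With that refinement your sketch is sound.
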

Using the methods from \cite{arxiv:1808.09048}, if $\calT_t$ is either $\calM_t$ or $\calH_t$, we can prove that for every $p\in(1, \infty)$ there is  $0<C_{p}<\infty$ such that for every $f\in L^p(\R^{\Gamma})$ and for every increasing sequence $0<t_1<t_2<\cdots$ we have 
\begin{align}
\label{eq:8}
  \norm[\Big]{\big(\sum_{k\in\N}\abs{(\calT_{t_{k+1}}-\calT_{t_{k}}) f}^2\big)^{1/2}}_{L^p(\R^{\Gamma})}\le C_p\norm{f}_{L^p(\R^{\Gamma})}.
\end{align}
Estimate \eqref{eq:8} will be used to verify condition \eqref{eq:Phi-square}.
Inequality \eqref{eq:8} may be thought of as a weak form of $r$-variational inequality \eqref{eq:237} with $r=2$.
We know that, for many operators in harmonic analysis $r$-variational inequalities only make sense for $r>2$, since L\'epingle's inequality fails outside this range.
However, the square function estimate \eqref{eq:8} follows from the randomized bound
\begin{align*}
  \norm[\Big]{\sum_{k\in\Z}\varepsilon_k(\calT_{k+1}-\calT_k)f}_{L^p(\R^{\Gamma})}\lesssim_p\norm{f}_{L^p(\R^{\Gamma})},
\end{align*}
which holds unifomly for every sequence $(\varepsilon_{k})_{k\in\Z}$ bounded by $1$ by \cite[Theorem 2.28]{arxiv:1808.09048} and the short $2$-variation bound
\begin{align*}
  \norm[\Big]{\big(\sum_{k\in\Z}V^2(\calT_tf: t\in[k, k+1])^2\big)^{1/2}}_{L^p(\R^{\Gamma})}\lesssim_p\norm{f}_{L^p(\R^{\Gamma})},
\end{align*}
which holds by \cite[Theorem 2.39]{arxiv:1808.09048}.

Let $m_t^M$ and $m_t^H$ be the multipliers corresponding to the kernels $ K_{2^t}^{M}$ and $ K_{2^t}^{H}$, respectively.
More precisely, for every $t\ge0$, we have
\begin{align}
\label{eq:169}
m_t^{M}(\xi) :=
\frac{1}{\abs{\Omega_{2^t}\cap\Z^k}}\sum_{y \in \Omega_{2^t}\cap\Z^k} e(\xi\cdot{(y)^{\Gamma}})
\quad \text{ and } \quad
m_t^{H}(\xi) :=
\sum_{y \in \Omega_{2^t}\cap\Z^k\setminus\{0\}}e(\xi\cdot{(y)^{\Gamma}})K(y).
\end{align}
The continuous versions of the multipliers from \eqref{eq:169} are given by
\begin{equation*}
\Phi^{M}_t(\xi) :=
\frac{1}{\abs{\Omega_{2^t}}} \int_{\Omega_{2^t}} e(\xi\cdot{(y)^{\Gamma}}) \dif y
\quad \text{ and } \quad
\Phi^{H}_t(\xi) :={\rm p.v.}
\int_{\Omega_{2^t}} e(\xi\cdot{(y)^{\Gamma}}) K(y) \dif y.
\end{equation*}
We see that $\Phi^{M}_t$ corresponds to the Fourier transform of the averaging operator $\calM_t$ and $\Phi^{H}_t$ corresponds to the Fourier
transform of the truncated Radon transform $\calH_t$.
Therefore, \eqref{eq:173} and \eqref{eq:8} can be applied, and conditions \eqref{eq:Phi-jump} and \eqref{eq:Phi-square} are verified respectively with $\Phi_t=\Phi^{M}_t$ or $\Phi_t=\Phi^{H}_t$.
In order to prove \eqref{eq:Phi-decay}, we appeal to van der Corput's estimates from \cite[Proposition B.2]{arxiv:1808.09048}.
Indeed, the van der Corput estimate ensures that
\begin{align}
\label{eq:172}
\abs{\Phi_t^M(\xi)}\lesssim \abs{ 2^{tA} \xi }_{\infty}^{-1/d}\lesssim
(t\frakq_*(\xi))^{-1/d}
\quad \text{if}\quad t\frakq_*(\xi)\ge1.
\end{align}
By a simple calculation, we get
\begin{align}
\label{eq:166}
\abs{\Phi_t^M(\xi)-1}\lesssim \abs{ 2^{tA} \xi }_{\infty}^{1/d}\lesssim (t\frakq_*(\xi))^{1/d}
\quad \text{if}\quad t\frakq_*(\xi)\le1.
\end{align}
Taking into account \eqref{eq:K-modulus-cont} and again van der Corput's estimate, we obtain
\begin{equation}
\label{eq:245}
\abs{ \Phi_{t}^H(\xi)-\Phi_{s}^H(\xi) }
\lesssim
(t\frakq_*(\xi))^{-\sigma/d},
\quad \text{if}\quad t\frakq_*(\xi)\ge1,
\end{equation}
for all $\kappa t\leq s\leq t$ with the implicit constant depending on $\kappa\in(0, 1)$.
Additionally, due to the cancellation condition \eqref{eq:cancel} and \eqref{eq:size-unif},  we have
\begin{align}
\label{eq:246}
\abs{\Phi_{t}^H(\xi)-\Phi_{s}^H(\xi)}
\lesssim
(t\frakq_*(\xi))^{\sigma/d},\quad \text{if}\quad t\frakq_*(\xi)\le1.
\end{align}
Therefore, \eqref{eq:172} and \eqref{eq:245} guarantee that \eqref{eq:Phi-decay} holds respectively with $\Phi_t=\Phi^{M}_t$ or $\Phi_t=\Phi^{H}_t$.

The \emph{Gauss sum} corresponding to $q\in\N$ and $a\in A_{q}$ is defined by
\begin{equation}
\label{eq:171}
G(a/q) := q^{-k} \sum_{r\in\N_{q}^{k}} e((a/q)\cdot \stPol{r}).
\end{equation}
Using Theorem~\ref{thm:weyl-sums}, we can also deduce the decay claimed in condition \eqref{eq:G-decay} for the Gauss sums.

\begin{lemma}
\label{lem:Gaussian-sum-decay}
For every $k\in\N$ there exists $\delta=\delta(k)>0$ such that for every $q\in\N$ and $a\in A_{q}$ we have
\[
\abs{G(a/q)} \lesssim_{k} q^{-\delta}.
\]
\end{lemma}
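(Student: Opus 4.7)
The plan is to reduce to prime power moduli by multiplicativity and then apply the Weyl sum bound Theorem~\ref{thm:weyl-sums} from the appendix.

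\textbf{Step 1: Multiplicative reduction.} Suppose $q=q_{1}q_{2}$ with $\gcd(q_{1},q_{2})=1$, and let $\bar q_{i}$ be the multiplicative inverse of $q_{i}$ modulo $q_{3-i}$. The change of variables $r_{j}\mapsto q_{2}\bar q_{2}s_{j}+q_{1}\bar q_{1}t_{j}$ on $(\Z/q\Z)^{k}$ is a bijection with $(\Z/q_{1}\Z)^{k}\times(\Z/q_{2}\Z)^{k}$, and each monomial $r^{\gamma}$ reduces modulo $q_{i}$ to a unit times $s^{\gamma}$ (resp.\ $t^{\gamma}$). Combining the exponentials modulo $q_{1}$ and $q_{2}$ via CRT, one obtains the product formula
\[
G(a/q)=G(a'/q_{1})\,G(a''/q_{2}),
\]
where the vectors $a'\in A_{q_{1}}$ and $a''\in A_{q_{2}}$ are suitable twists of $a$. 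Iterating on the prime factorization $q=p_{1}^{s_{1}}\dotsm p_{n}^{s_{n}}$ reduces the problem to showing $|G(a/p^{s})|\lesssim_{k}p^{-\delta s}$ whenever $a\in A_{p^{s}}$, uniformly in $p$ and $s$.

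\textbf{Step 2: Prime power case via Weyl.} Fix $q=p^{s}$ and $a\in A_{q}$. By the definition of $A_{q}$ there exists $\gamma_{0}\in\Gamma$ such that $\gcd(a_{\gamma_{0}},p)=1$, hence $a_{\gamma_{0}}/q$ is a reduced fraction with denominator exactly $q=p^{s}$. The Gauss sum $G(a/q)$ is a normalized complete Weyl sum of length $q^{k}$ with polynomial phase $\sum_{\gamma\in\Gamma}(a_{\gamma}/q)r^{\gamma}$ of degree $d_{0}=\max_{\gamma\in\Gamma}|\gamma|$. Theorem~\ref{thm:weyl-sums} applied to this phase yields a bound of the form $|G(a/q)|\lesssim_{k}q^{-\delta_{0}}$ for some $\delta_{0}=\delta_{0}(k,d_{0})>0$ depending only on $k$ and $\Gamma$; this uses precisely the fact that the coefficient $a_{\gamma_{0}}/q$ of the monomial $r^{\gamma_{0}}$ is a reduced fraction with denominator $q$.

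\textbf{Step 3: Combining.} Applying Step 2 to each prime power factor $p_{i}^{s_{i}}$ separately, the product formula of Step 1 gives
\[
|G(a/q)|=\prod_{i=1}^{n}|G(a^{(i)}/p_{i}^{s_{i}})|\lesssim_{k}\prod_{i=1}^{n}(p_{i}^{s_{i}})^{-\delta_{0}}=q^{-\delta_{0}},
\]
which is the desired estimate with $\delta=\delta_{0}$.

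The only genuine difficulty is Step 2: verifying that, given the coprimality afforded by $A_{q}$, the multidimensional Weyl inequality in the appendix delivers a uniform polynomial decay rate. Steps 1 and 3 are purely bookkeeping once the multiplicative factorization (which holds because the monomials $r^{\gamma}$ transform covariantly under the CRT isomorphism) is set up correctly.
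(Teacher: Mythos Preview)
Your Step~2 has a genuine gap. The Weyl bound of Theorem~\ref{thm:weyl-sums} gives a saving of $\kappa^{-\epsilon}$ with $\kappa=\min\{q',N^{|\gamma_0|}/q'\}$; in the Gauss sum setting $N=q$ and, with your choice, $q'=q$, so $\kappa=\min\{q,q^{|\gamma_0|-1}\}$. This is $1$ whenever $|\gamma_0|=1$, and then the theorem says nothing. It can certainly happen that the only coordinate $a_{\gamma_0}$ coprime to $p$ is a linear one: for instance with $k=1$, $q=p^2$, and phase $(1/p)\,n^2+(1/p^2)\,n$ one has $a=(1,p)\in A_{p^2}$, yet the quadratic coefficient has reduced denominator only $p$, not $p^2$. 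More generally, nothing in the prime-power reduction forces any \emph{higher-degree} coefficient to have full denominator, and in that situation Weyl alone does not give $p^{-s\delta}$ uniformly in $s$.

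The paper's proof confronts exactly this obstruction, without any multiplicative reduction. It splits according to whether some $\gamma$ with $|\gamma|\ge 2$ has reduced denominator $q_\gamma>q^{1/|\Gamma|}$. If so, Theorem~\ref{thm:weyl-sums} applies with $\kappa\ge q^{1/|\Gamma|}$. If not, then $Q:=\lcm\{q_\gamma:|\gamma|\ge 2\}<q$, the nonlinear part of the phase is constant on residue classes modulo $Q$, and on each such class the remaining linear sum vanishes unless every linear $q_\gamma$ divides $Q$---which would force $q\mid Q$, a contradiction. Thus $G(a/q)=0$ in this second case. Your argument needs this (or an equivalent) linear-case analysis; the CRT factorisation does not avoid it. A secondary point: in Step~3 the implicit constant accumulates as $C_k^{\omega(q)}$ over the prime factors, which is $q^{o(1)}$ and therefore harmless, but this should be said rather than suppressed inside a single $\lesssim_k$.
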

\begin{proof}
Let $a_{\gamma}'/q_{\gamma} = a_{\gamma}/q$ be the reduced form of the
entries of $a/q$ and let $\epsilon=\abs{\Gamma}^{-1}$. We first
assume that there
exists $\gamma\in\Gamma$ with $\abs{\gamma}\geq 2$ such that
$q^{\epsilon} < q_{\gamma}$, then also
$q_{\gamma}\leq q^{\abs{\gamma}-\epsilon}$, and we obtain the
conclusion by applying Theorem~\ref{thm:weyl-sums} with $\Omega=[0,q)^{k}$ and $\phi\equiv1$.

Otherwise, we assume that $q_{\gamma}\le q^{\epsilon} $ for every $\abs{\gamma}\geq 2$, thus $Q := \lcm\Set{q_{\gamma} \given \abs{\gamma}\geq 2} < q$ and the non-linear part of the polynomial $(a/q)\cdot \stPol{y}$ is constant modulo $1$ on congruence classes modulo $(Q\Z)^{k}$.
On the intersection of each of these congruence classes with $\N_{q}^{k}$, the exponential sum vanishes unless $q_{\gamma} \divides Q$ for all $\gamma\in\Gamma$ with $\abs{\gamma}=1$.
But in the latter case,
\[
q
\le\lcm\Set{q_{\gamma}\given\gamma\in\Gamma}=
\lcm\Set{Q,q_{\gamma}\given\abs{\gamma}=1}
=
Q,
\]
which gives a contradiction.
\end{proof}
Our task now, in the next four paragraphs, is to verify conditions \ref{MF:A}, \ref{MF:B}, \ref{MF:C}, and \ref{MF:D} from Definition \ref{def:1}.

\begin{remark}
\label{rem:1}
Theorem \ref{thm:4} allows us to prove that if $T_t$ is either $\tilde{M}_t^P$ or $\tilde{H}_t^P$ from \eqref{eq:1} with $P(x)=(x)^{\Gamma}$, then \eqref{eq:95} holds for all $p\in(1, \infty)$, and consequently we recover \eqref{eq:118} and \eqref{eq:119}, which were studied in \cite{arxiv:1803.05406}. The strategy is the same as we present here for the proof of Theorem \ref{thm:discrete-jump}. To verify
conditions in  Definition \ref{def:1}, let us briefly indicate that \eqref{eq:G-decay} is given by \cite[Theorem 3]{arxiv:1803.05406}. Conditions \eqref{eq:Phi-jump}, \eqref{eq:Phi-square} and \eqref{eq:Phi-decay} have been already verified above. For condition \ref{MF:A} see below.   Condition \ref{MF:B} is given by \cite[Theorem 1]{arxiv:1803.05406}. Condition \ref{MF:C} is given by \cite[Propositions 4.1, 4.2]{arxiv:1803.05406}. Condition \ref{MF:D} is satisfied with $\tilde{m}_{N}$ being the multiplier corresponding to an averaging operator of (logarithmic) scale $N-N^{\chi/10}$, say.
\end{remark}

\subsection{Condition \ref{MF:A}}
The $1$-variation norm in \eqref{eq:109} for $K_{2^t}=K_{2^t}^M$ or $K_{2^t}=K_{2^t}^H$ is controlled by a constant times $2^{-k n^{\tau}}$ times the number of lattice points in $\Omega_{2^{(n+1)^{\tau}}}\setminus\Omega_{2^{n^{\tau}}}$.
The latter quantity will be controlled by Proposition~\ref{cor:lattice-point-near-boundary} that refines \cite[Proposition 9]{MR1719802}.
A partial result in this direction was obtained in \cite[Proposition 3.1]{arXiv:1512.07518}.

\begin{proposition}
\label{cor:lattice-point-near-boundary}
Let $\Omega\subset\R^{k}$ be a bounded and convex set and let
$1\leq s\leq \diam(\Omega)$. Then
\begin{equation}
\label{eq:lattice-point-near-boundary}
\#\Set{x \in \Z^{k} : \dist(x,\partial\Omega)<s}
\lesssim_{k}
s \diam(\Omega)^{k-1}.
\end{equation}
The implicit constant depends only on the dimension
$k$, but not on the convex set $\Omega$.
\end{proposition}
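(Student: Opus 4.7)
The strategy is to reduce the lattice-point count to a Euclidean volume estimate for the tubular neighborhood
\[
A_{s}:=\Set{y\in\R^{k}\given \dist(y,\partial\Omega)<s},
\]
and then to bound this volume using convex-geometric monotonicity. Set $D:=\diam(\Omega)$.

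First I would associate to each lattice point $x\in A_{s}\cap\Z^{k}$ the unit cube $x+[0,1)^{k}$. These cubes have pairwise disjoint interiors, and any such cube is contained in $A_{s+\sqrt{k}}$, since it has diameter $\sqrt{k}$ and the function $y\mapsto\dist(y,\partial\Omega)$ is $1$-Lipschitz. Hence
\[
\#\Set{x\in\Z^{k}\given x\in A_{s}} \le \meas{A_{s+\sqrt{k}}}.
\]
Because $s\ge1$, we have $s+\sqrt{k}\lesssim_{k} s$, so it suffices to prove $\meas{A_{t}}\lesssim_{k} t\,D^{k-1}$ for all $1\le t\lesssim_{k} D$.

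Next I would use the inclusion $A_{t}\subseteq (\Omega+tB)\setminus(\Omega\ominus tB)$, where $B$ is the closed Euclidean unit ball and $\Omega\ominus tB:=\Set{y\in\R^{k}\given y+tB\subseteq\Omega}$. By Steiner's formula for outer parallel bodies (and its analogue for inner parallel bodies), the volume of this shell is at most $C_{k}\bigl(t\cdot\mathcal{H}^{k-1}(\partial\Omega)+t^{k}\bigr)$. Since $\Omega\subseteq B(x_{0},D)$ for some $x_{0}\in\R^{k}$, and the nearest-point projection onto a convex body is $1$-Lipschitz, surface area is monotone under inclusion of convex bodies; hence $\mathcal{H}^{k-1}(\partial\Omega)\le\mathcal{H}^{k-1}(\partial B(x_{0},D))\lesssim_{k} D^{k-1}$. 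Combining with $t\lesssim_{k} D$ gives $\meas{A_{t}}\lesssim_{k} t\,D^{k-1}$, as required.

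The main obstacle I anticipate is the degenerate case when $\Omega$ has empty interior, so that $\Omega\ominus tB$ is empty and the Steiner-type shell identity needs extra justification. This is handled by passing to the affine hull of $\Omega$: there $\Omega$ becomes a full-dimensional convex body in $\R^{k'}$ with $k'<k$, and its tubular neighborhood in $\R^{k}$ is a normal thickening whose volume picks up a harmless factor of $t^{k-k'}\le D^{k-k'}$, still compatible with the bound $t\,D^{k-1}$. One could alternatively argue by induction on $k$ via slicing along coordinate hyperplanes, but the Euclidean (rather than coordinate-wise) nature of $\dist(\cdot,\partial\Omega)$ makes the bookkeeping more intricate, so the Minkowski-shell approach is preferable.
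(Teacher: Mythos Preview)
Your approach is essentially the same as the paper's: both reduce the lattice-point count to the volume of a slightly thickened tubular neighborhood of $\partial\Omega$ by packing sets of volume $\simeq 1$ around each lattice point (you use unit cubes, the paper uses balls of radius $1/2$), and then bound that volume by $\lesssim_k s\,D^{k-1}$. The paper outsources the volume bound to a lemma in a companion article, whereas you argue directly.

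One correction to your direct argument: the intermediate claim that the shell $(\Omega+tB)\setminus(\Omega\ominus tB)$ has volume $\lesssim_k t\,\mathcal{H}^{k-1}(\partial\Omega)+t^{k}$ is not true in general. The middle terms in Steiner's formula are quermassintegrals that are not controlled by surface area and $t^{k}$ alone; a long thin cylinder in $\R^{3}$ with $t$ exceeding its radius already gives a counterexample. The fix is in the same spirit as what you do for surface area, applied to every term: by monotonicity of all quermassintegrals under inclusion of convex bodies one has $W_{j}(\Omega)\le W_{j}(B(x_{0},D))\lesssim_{k}D^{k-j}$, so
\[
|\Omega+tB|-|\Omega|\;\lesssim_{k}\;\sum_{j=1}^{k}D^{k-j}t^{j}\;\lesssim_{k}\;t\,D^{k-1}
\quad\text{for }t\lesssim_{k}D,
\]
and the inner-parallel part follows from the coarea formula together with surface-area monotonicity applied to the convex sets $\Omega\ominus sB\subseteq\Omega$. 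With this adjustment your proof is complete.
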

\begin{proof}
Let $\Omega(s)=\Set{x \in \Z^{k} : \dist(x,\partial\Omega)<s}$ and
observe that
\[
\#\Omega(s)= \sum_{x\in\Omega(s)}1\lesssim
\sum_{x\in\Omega(s)}\abs{B(x,1/2)}\le \meas{\Set{ x\in\R^k : \dist(x,\partial\Omega) < s+1/2 }}\lesssim s \diam(\Omega)^{k-1},
\]
since the balls $B(x,1/2)$ for $x\in\Omega(s)$, are disjoint, have measure $\gtrsim 1$, and are contained in the set $\Set{ x\in\R^d : \dist(x,\partial\Omega) < s+1/2 }$, which has measure $\lesssim s \diam(\Omega)^{k-1}$ by \cite[Lemma A.1]{arxiv:1808.09048}.
\end{proof}
It follows that
\[
2^{-k n^{\tau}} \card{\Z^k \cap \Omega_{2^{(n+1)^{\tau}}}\setminus\Omega_{2^{n^{\tau}}}}
\lesssim
n^{\tau-1},
\]
and the condition \ref{MF:A} follows using monotonicity of the family of sets $\Omega_{t}$.

\subsection{Condition \ref{MF:B}}
We fix $N\in\N$ and suppose that, for some  multi-index $\gamma_0\in\Gamma$ and some integers $a, q$ such that $0\le a\le q$ and $(a, q) = 1$, we have $|\xi_{\gamma_0} - {a}/{q}|\le q^{-2}$.
For the multipliers associated with the averaging Radon operators we apply  Weyl's
inequality \eqref{eq:56} with $\phi\equiv1$, $\Omega=\Omega_{2^t}$ for
$t\in[N, N+1]$, and obtain
        \begin{align*}
          \sup_{N\le t_1, t_2\le N+1}|m_{t_1}^M(\xi)-m_{t_2}^M(\xi)|
          \lesssim \max\bigg\{\frac{1}{q}, \frac{q}{2^{|\gamma_0|N}}\bigg\}^{\varepsilon}\log (2^N)\lesssim N^{-\varepsilon\beta+1}, 
        \end{align*}
provided that
        \begin{align*}
        	N^\beta \leq q \leq 2^{|\gamma_0|N} N^{-\beta}.
        \end{align*}
Choosing $\beta=\beta({\alpha})=\varepsilon^{-1}(\alpha+1)$, we obtain
        \begin{align*}
          \sup_{N\le t_1, t_2\le N+1}|m_{t_1}^M(\xi)-m_{t_2}^M(\xi)|\lesssim N^{-\alpha}
        \end{align*}
with the implicit constant independent of $N\in\mathbb N$.

For the multipliers associated with the truncated singular Radon operators, we apply  Weyl's inequality \eqref{eq:56} with $\phi=K$, $\Omega=\Omega_{2^{t_2}}\setminus\Omega_{2^{t_1}}$ for $N\le t_1<t_2\le N+1$ and obtain, taking into account
\eqref{eq:size-unif} and \eqref{eq:K-modulus-cont}, that
        \begin{align*}
          \sup_{N\le t_1, t_2\le N+1}|m_{t_1}^H(\xi)-m_{t_2}^H(\xi)|
&          \lesssim \max\bigg\{\frac{1}{q},
          \frac{q}{2^{|\gamma_0|N}}\bigg\}^{\varepsilon}\log (2^N)+
  2^{kN} \sup_{\abs{x-y}\leq 2^N\kappa^{-\varepsilon}} \abs{K(x)-K(y)}\\
          &\lesssim N^{-\varepsilon\beta+1}
            +\max\bigg\{\frac{1}{q},\frac{q}{2^{|\gamma_0|N}}\bigg\}^{\sigma\varepsilon}\lesssim  N^{-\sigma\varepsilon\beta+1},
        \end{align*}
provided that
        \begin{align*}
        	N^\beta \leq q \leq 2^{|\gamma_0|N} N^{-\beta}.
        \end{align*}
As before, choosing $\beta=\beta({\alpha})=(\sigma \varepsilon)^{-1}(\alpha+1)$, we obtain
        \begin{align*}
          \sup_{N\le t_1, t_2\le N+1}|m_{t_1}^H(\xi)-m_{t_2}^H(\xi)|\lesssim N^{-\alpha}
        \end{align*}
with the implicit constant independent of $N\in\mathbb N$.

\subsection{Condition \ref{MF:C} and \ref{MF:D}}
In order to prove inequality \eqref{eq:110} from condition \ref{MF:C} or inequality \eqref{eq:104} from condition \ref{MF:D}, we will appeal to the following proposition.

\begin{proposition}
\label{prop:major-arc-singular}
Let $\Omega \subseteq B(0,N) \subset \R^{k}$ be a convex set or a
Boolean combination of finitely many convex sets and let $\calK : \Omega \to
\C$ be a continuous function. Then, for every $q\in\N$, $a\in A_{q}$, and $\xi = a/q + \theta \in \R^\Gamma$, we have
\begin{multline*}
\abs[\Big]{ \sum_{y \in \Omega\cap\Z^{k}} e({\xi}\cdot{\stPol{y}}) \calK(y)
- G(a/q) \int_{\Omega}e(\theta\cdot\stPol{t}) \calK(t) \dif t}\\
\lesssim_{k}
\frac{q}{N} N^{k} \norm{\calK}_{L^{\infty}(\Omega)} +N^{k} \norm{\calK}_{L^{\infty}(\Omega)} \sum_{\gamma \in \Gamma} \big(q\abs{\theta_\gamma} N^{\abs{\gamma}-1}\big)^{\epsilon_{\gamma}}
+ N^{k} \sup_{x, y\in\Omega : \abs{x-y}\leq q} \abs{\calK(x)-\calK(y)},
\end{multline*}
for any sequence $(\epsilon_{\gamma}: \gamma\in\Gamma)\subseteq[0, 1]$.
The implicit constant is independent of $a, q, N, \theta$, and the kernel $\calK$.
\end{proposition}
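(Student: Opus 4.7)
The plan is to carry out the standard major-arc decomposition: split the sum modulo $q$, use the polynomial identity $(qz+r)^\gamma \equiv r^\gamma \pmod q$ to factor out the Gauss sum, and then compare the residue-wise inner sum to a continuous integral via a Riemann-sum approximation. Concretely, I would first write each $y \in \Omega\cap\Z^k$ uniquely as $y = qz+r$ with $r \in \N_q^k$ and $z\in\Z^k$, and observe from the binomial expansion that $(a/q)\cdot(qz+r)^\Gamma \equiv (a/q)\cdot (r)^\Gamma \pmod 1$. Setting $F(y) := e(\theta\cdot (y)^\Gamma) \calK(y)$, the full sum factors as $\sum_{r\in\N_q^k} e((a/q)\cdot(r)^\Gamma)\, S_r$, where $S_r := \sum_{z : qz+r\in\Omega} F(qz+r)$.

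The main term arises from the definition \eqref{eq:171} once each $S_r$ is replaced by the common integral $q^{-k}\int_\Omega F(t)\,dt$. Writing the total error as $E = \sum_r e((a/q)\cdot(r)^\Gamma)(S_r - q^{-k}\int_\Omega F)$ and applying the triangle inequality, it suffices to prove the cube-by-cube Riemann estimate
\[
\Bigl| q^k S_r - \int_\Omega F(t)\,dt \Bigr|
\lesssim_k
q N^{k-1} \|\calK\|_\infty
+ N^k \|\calK\|_\infty \sum_{\gamma\in\Gamma} (q|\theta_\gamma| N^{|\gamma|-1})^{\epsilon_\gamma}
+ N^k \sup_{\substack{x,y\in\Omega \\ |x-y|\le q}} |\calK(x)-\calK(y)|
\]
uniformly in $r$. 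I would tile $\R^k$ by cubes of side proportional to $q$ drawn from the shifted lattice $q\Z^k + r$, and separate their contribution to $q^k S_r - \int_\Omega F$ into boundary cubes (meeting $\partial\Omega$) and interior cubes (contained in $\Omega$).

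The boundary cubes contribute the first error term: by Proposition \ref{cor:lattice-point-near-boundary} applied to the rescaled body $\Omega/q$, at most $\lesssim (N/q)^{k-1}$ lattice points of $q\Z^k+r$ sit within distance $\sim q$ of $\partial\Omega$, and each such cube contributes $\lesssim q^k \|\calK\|_\infty$ to both $q^k S_r$ and $\int_\Omega F$, yielding $\lesssim qN^{k-1}\|\calK\|_\infty$. For Boolean combinations I would reduce to the convex case by a finite decomposition, absorbing the number of pieces into the implicit constant. On each interior cube $C_{z,r}$ I would bound
\[
|F(qz+r)-F(t)| \le \sup_{|x-y|\le q\sqrt k} |\calK(x)-\calK(y)| + \|\calK\|_\infty \cdot \bigl|e\bigl(\theta\cdot((qz+r)^\Gamma - t^\Gamma)\bigr) - 1\bigr|,
\]
and control the phase variation by $|e(\alpha+\beta)-1|\le |e(\alpha)-1|+|e(\beta)-1|$ applied monomial by monomial, together with the elementary interpolation $|e(\alpha)-1| \le \min(2,2\pi|\alpha|) \lesssim |\alpha|^{\epsilon_\gamma}$ valid for any $\epsilon_\gamma\in[0,1]$. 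Since $|(qz+r)^\gamma - t^\gamma| \lesssim q N^{|\gamma|-1}$ on $C_{z,r}$, integrating $q^k$ times the pointwise variation over the $\lesssim (N/q)^k$ interior cubes produces exactly the second and third error terms.

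The main difficulty is cosmetic rather than substantive: one must split $|e(\theta\cdot((qz+r)^\Gamma - t^\Gamma))-1|$ monomial by monomial before interpolating, so that each $\epsilon_\gamma$ can be tuned independently; and the boundary count for Boolean combinations is handled by the finite convex decomposition mentioned above. The factor $\sqrt k$ relating the cube diameter to its side length, as well as the factor $q^k$ converted between discrete sums and integrals, are all absorbed in the $k$-dependent implicit constant.
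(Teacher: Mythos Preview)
Your proposal is correct and follows essentially the same approach as the paper: split modulo $q$, factor out the Gauss sum via $(a/q)\cdot(qz+r)^\Gamma \equiv (a/q)\cdot(r)^\Gamma$, and compare each residue sum to the common integral cube by cube, using Proposition~\ref{cor:lattice-point-near-boundary} for the boundary count and the pointwise bounds on $\calK$ and the phase for the interior. The only cosmetic difference is that the paper handles the boundary contribution via the indicator-function difference $\sum_y |\ind{\Omega}(qy+r)-\ind{\Omega}(qy+t)|$ inside a single triangle inequality rather than separating into boundary and interior cubes, and your monomial-by-monomial interpolation $|e(\alpha)-1|\lesssim |\alpha|^{\epsilon_\gamma}$ is stated slightly more carefully than in the paper.
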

\begin{proof}
We split the sum into congruence classes modulo $q$ as follows:
\[
\sum_{y \in \Omega\cap\Z^{k}} e({\xi}\cdot\stPol{y}) \calK(y)
=
q^{-k}\sum_{r \in \N_q^k}
e(\stPol{r}\cdot a/q)
\cdot \Big(q^k\sum_{\substack{y \in \Z^k \\ qy+r\in \Omega}}
e({\theta}\cdot\stPol{qy+r}) \calK(qy+r)\Big).
\]
In order to approximate the expression in the brackets on the right-hand side by an integral, we write
\begin{multline*}
\abs[\Big]{q^{k}\sum_{\substack{y \in \Z^k \\ qy+r\in\Omega}}
e({\theta}\cdot\stPol{qy+r}) \calK(qy+r)
-\int_{\Omega}e(\theta\cdot\stPol{t}) \calK(t) \dif t}\\
=
\abs[\Big]{q^{k}\sum_{y \in \Z^k} e({\theta}\cdot\stPol{qy+r}) \calK(qy+r)\ind{\Omega}(qy+r)
-\sum_{y \in \Z^k}\int_{qy+[0,q)^k}e(\theta\cdot\stPol{t}) \calK(t)\ind{\Omega}(t) \dif t}\\
\leq
\sum_{y \in \Z^k} \int_{[0,q)^k} \abs[\big]{e(\theta\cdot\stPol{qy+r})\calK(qy+r)\ind{\Omega}(qy+r)
- e(\theta\cdot\stPol{qy+t}) \calK(qy+t)\ind{\Omega}(qy+t)} \dif t.
\end{multline*}
Notice that
\[
\abs[\big]{
\theta\cdot\stPol{q y + r} - \theta\cdot\stPol{q y+t}
}
\lesssim
\sum_{\gamma \in \Gamma} \big(q\abs{\theta_\gamma} N^{\abs{\gamma}-1}\big)^{\epsilon_{\gamma}},
\]
and
\[
\abs[\big]{\calK(q y + r) - \calK(q y+t)}
\lesssim
\sup_{x, y\in\Omega : \abs{x-y}\leq q} \abs{\calK(x)-\calK(y)},
\]
and
\[
\sum_{y\in\Z^k}\abs{\ind{\Omega}(qy+r)-\ind{\Omega}(qy+t)}\le\abs{(q^{-1}(\Omega-{r}))
\triangle (q^{-1}(\Omega-{t}))}\lesssim (N/q)^{k-1},
\]
where the last inequality is a consequence of Proposition~\ref{cor:lattice-point-near-boundary}.
Hence, we obtain the estimate 
\begin{multline*}
\abs[\Big]{q^{k}\sum_{\substack{y \in \Z^k \\ qy+r\in\Omega}}
e({\theta}\cdot\stPol{qy+r}) \calK(qy+r)
-\int_{\Omega}e(\theta\cdot\stPol{t}) \calK(t) \dif t}\\
\lesssim q^{k} \norm{\calK}_{L^{\infty}(\Omega)} (N/q)^{k-1}
+ N^{k} \norm{\calK}_{L^{\infty}(\Omega)} \sum_{\gamma \in \Gamma} \big(q\abs{\theta_\gamma} N^{\abs{\gamma}-1}\big)^{\epsilon_{\gamma}}
+ N^{k}\sup_{x, y\in\Omega : \abs{x-y}\leq q} \abs{\calK(x)-\calK(y)}.
\end{multline*}
Averaging in $r$, we obtain the claim.
\end{proof}

In order to verify \eqref{eq:110} for the averaging Radon operators, we apply Proposition~\ref{prop:major-arc-singular} with $\mathcal
K=|\Omega|^{-1}\ind{\Omega}$, where  $\Omega=\Omega_{2^t}$ for  $t\in[N, N+1]$, and $\epsilon_{\gamma}=1/\abs{\gamma}$.
Since $\norm{\calK}_{L^{\infty}(\Omega)}\lesssim 2^{-kN}$ and  $\sup_{x, y\in\Omega : \abs{x-y}\leq q} \abs{\calK(x)-\calK(y)}=0$, we obtain
\begin{align}
  \label{eq:12}
  \begin{split}
\sup_{N\le t\le N+1}\abs[\big]{m_{t}^M(\xi)-G(a/q)\Phi_{t}^M(\xi-a/q)}
& \lesssim q2^{-N}+\frakq_*\big(2^{NA}(q2^{-N})^{I}(\xi-a/q)\big)\\
&\lesssim N^{-\alpha},
  \end{split}
\end{align}
provided that  $1 \leq q \lesssim N^{\beta}$,  and $\abs{\xi_{\gamma}-a_{\gamma}/q} \leq 2^{-N\abs{\gamma}+N^{\chi}}$ for all $\gamma\in\Gamma$.
Thus \eqref{eq:12} implies \eqref{eq:110}.

For the truncated singular Radon operators, we apply Proposition \ref{prop:major-arc-singular} with $\calK=K\ind{\Omega}$, where  $\Omega=\Omega_{2^{t_2}}\setminus\Omega_{2^{t_1}}$ for
$N\le t_1< t_2\le N+1$, and $\epsilon_{\gamma}=1/\abs{\gamma}$.
Since $\norm{\calK}_{L^{\infty}(\Omega)}\lesssim 2^{-kN}$ by \eqref{eq:size-unif} and  $\sup_{x, y\in\Omega : \abs{x-y}\leq q}
\abs{\calK(x)-\calK(y)}\lesssim (q/2^{-N})^{\sigma}$ by \eqref{eq:K-modulus-cont}, we obtain
\begin{align*}
\sup_{N\le t_1, t_2\le N+1}\abs[\big]{m_{t_1}^H(\xi)-m_{t_2}^H(\xi)&-G(a/q)\big(\Phi_{t_1}^H(\xi-a/q)-\Phi_{t_2}^H(\xi-a/q)\big)}\\
& \lesssim q2^{-N}+(q2^{-N})^{\sigma}+\frakq_*\big(2^{NA}(q2^{-N})^{I}(\xi-a/q)\big)\\
&\lesssim N^{-\alpha}
\end{align*}
provided that  $1 \leq q \lesssim N^{\beta}$,  and $\abs{\xi_{\gamma}-a_{\gamma}/q} \leq 2^{-N\abs{\gamma}+N^{\chi}}$ for all $\gamma\in\Gamma$.

In order to verify condition \ref{MF:D}, we proceed in a similar way as in \eqref{eq:12} with the multiplier $\tilde{m}_{N} = \abs{\tilde{\Omega}_{N}}^{-1} \widehat{\one_{\tilde{\Omega}_{N}}}$, $\tilde{\Omega}_{N}=(\Z \cap [0,2^{N^{\chi} - 2N^{\chi/2}}])^{\Gamma}$.


\appendix

\section{Multidimensional exponential sums}
\label{sec:weyl}
In this section we will present a refinement of \cite[Proposition 3]{MR1719802} and \cite[Theorem 3.1]{arXiv:1512.07518}.
\begin{theorem}
\label{thm:weyl-sums}
For every $k,d \in\N$, there exists $\epsilon>0$ such that, for every polynomial
\[
P(n) = \sum_{\gamma\in\N_0^{k} : 0<\abs{\gamma}\leq d} \xi_{\gamma} n^{\gamma},
\]
every $N>1$, convex set $\Omega\subseteq B(0,N) \subset \R^{k}$, function $\phi : \Omega\cap\Z^{k}\to\C$, multi-index $\gamma_{0}\in\Gamma$, and integers $0\leq a<q$ with $(a, q) = 1$ and
\begin{align}
\label{eq:190}
\abs[\Big]{\xi_{\gamma_0} - \frac{a}{q}}
\leq
\frac{1}{q^2},
\end{align}
we have
\begin{equation}
\label{eq:56}
\abs[\Big]{\sum_{n \in \Omega \cap \Z^k} e(P(n))\phi(n)}
\lesssim_{d,k}
N^k \kappa^{-\epsilon} \log (N+1) \norm{\phi}_{L^\infty(\Omega)} + N^{k} \sup_{\abs{x-y}\leq N\kappa^{-\epsilon}} \abs{\phi(x)-\phi(y)},
\end{equation}
where
\[
\kappa = \min \{q,N^{\abs{\gamma_{0}}}/q\}.
\]
The implicit constant in \eqref{eq:56} is independent on the coefficients of  $P$ and the numbers $a$, $q$, and $N$.
\end{theorem}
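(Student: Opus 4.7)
The plan is to reduce to the case $\phi \equiv 1$ via a Fourier-analytic smoothing of $\phi$, and then establish the resulting unmodulated Weyl-type bound via multidimensional Weyl differencing.

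For the reduction, extend $\phi$ continuously to $\R^k$ (with the same modulus of continuity up to $O(1)$), let $r = N\kappa^{-\epsilon'}$ for some small $\epsilon' > 0$ to be chosen, and set $\phi_r := \phi \ast \psi_r$, where $\psi_r(x) = r^{-k}\psi(x/r)$ for a fixed Schwartz bump $\psi$ with $\int \psi = 1$. Then $\|\phi - \phi_r\|_{L^\infty(\Omega)} \lesssim \sup_{|x-y|\leq r}|\phi(x)-\phi(y)|$, so the contribution of $\phi - \phi_r$ to the sum is absorbed into the second term on the right of \eqref{eq:56}. For the main term, Fourier inversion gives
\[
\sum_{n \in \Omega \cap \Z^k} e(P(n))\phi_r(n)
= \int_{\R^k} \widehat{\phi_r}(\eta)\bigg(\sum_{n \in \Omega \cap \Z^k} e(P(n) + \eta\cdot n)\bigg)\,d\eta,
\]
with $\|\widehat{\phi_r}\|_{L^1(\R^k)} \lesssim (N/r)^{k/2}\|\phi\|_\infty = \kappa^{k\epsilon'/2}\|\phi\|_\infty$ by Cauchy--Schwarz and Plancherel. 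Each inner sum is a Weyl sum for the polynomial $P_\eta(n) := P(n) + \eta\cdot n$; since $\eta\cdot n$ is linear, its coefficient at $n^{\gamma_0}$ is either unchanged (when $|\gamma_0|\geq 2$) or perturbed by $\eta_{j_0}$ of size $\lesssim 1/r$ (when $\gamma_0 = e_{j_0}$), and in either case one retains a Dirichlet approximation with denominator $\simeq q$.

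It remains to establish, for such $P_\eta$, an unmodulated estimate
\[
\bigg|\sum_{n \in \Omega \cap \Z^k} e(P_\eta(n))\bigg| \lesssim N^k \kappa^{-\epsilon_0} \log(N+1)
\]
for some $\epsilon_0 = \epsilon_0(k, d) > 0$. Fix $j_0$ with $\gamma_{0,j_0}\geq 1$ and slice $\Omega$ into fibers parallel to $e_{j_0}$. By convexity each fiber is an interval, and by Proposition \ref{cor:lattice-point-near-boundary} the lattice points of $\Omega$ lying within distance $s$ of its boundary contribute $\lesssim sN^{k-1}$, which is negligible for an appropriate $s$. On each interior fiber, apply one-dimensional Weyl differencing $\gamma_{0,j_0}$ times in $n_{j_0}$; the resulting polynomial in $n_{j_0}$ is of degree $d - \gamma_{0,j_0}$, and its coefficient of $n_{j_0}^0$ (viewed as a polynomial in the transversal variables $\tilde n$ and the difference parameters $\mathbf{h} = (h_1,\ldots,h_{\gamma_{0,j_0}})$) contains the monomial $\gamma_{0,j_0}!\,\xi_{\gamma_0}\,h_1\cdots h_{\gamma_{0,j_0}}\,\tilde n^{\tilde \gamma_0}$, where $\tilde\gamma_0$ is $\gamma_0$ with the $j_0$-th coordinate removed. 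A further iteration of Weyl differencing in the transversal directions followed by averaging over $\mathbf{h}$ transfers the Diophantine approximation of $\xi_{\gamma_0}$ to the resulting exponential sums, yielding the claimed bound. Choosing $\epsilon' = \epsilon_0/k$ then gives \eqref{eq:56} with $\epsilon = \epsilon_0/2$.

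The main obstacle is the sharpness of the logarithmic factor in the unmodulated estimate: the approach of \cite{MR1719802} incurs a polynomial power loss, and refining to $\log(N+1)$ requires tight counting arguments in each differencing step, in particular using Proposition \ref{cor:lattice-point-near-boundary} rather than the cruder bounds of \cite{MR1719802} to control lattice points near the boundary of the relevant convex sets. The uniformity of the implicit constant in the non-leading coefficients of $P$ arises automatically from Weyl differencing, since these coefficients enter only through the difference parameters and are absorbed into the averaging.
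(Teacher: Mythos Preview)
Your reduction to $\phi \equiv 1$ via Fourier smoothing is different from the paper's (which simply partitions $\Omega$ into $O(\kappa^{k\epsilon/(k+1)})$ pieces of diameter at most $N\kappa^{-\epsilon/(k+1)}$ and freezes $\phi$ on each piece). For $|\gamma_0| \geq 2$ your argument is sound, but for $|\gamma_0| = 1$ the claim that one ``retains a Dirichlet approximation with denominator $\simeq q$'' is not justified: the integral in $\eta$ runs over all of $\R^k$, not just $|\eta| \lesssim 1/r$, and even after restricting to that region a perturbation of the linear coefficient by $1/r = \kappa^{\epsilon'}/N$ need not preserve the hypothesis \eqref{eq:190} when $\kappa = N/q \ll q$. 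The paper's partition argument sidesteps this entirely.

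The more serious gap is in the unmodulated estimate. Direct multidimensional Weyl differencing, even when executed carefully, produces losses of the form $(\log N)^{C(d,k)}$ coming from divisor-type counts of the differencing parameters with prescribed product; it does not yield a single $\log N$. You correctly flag this exponent as the main obstacle, but your proposed mechanism --- Proposition~\ref{cor:lattice-point-near-boundary} --- concerns lattice points near the boundary of a convex body and is unrelated to the source of these logarithmic losses. The paper obtains the single $\log N$ by a different route: it reduces the $k$-dimensional sum to a one-dimensional one by finding a $\Z^k$-automorphism under which the $x_1^{|\gamma_0|}$-coefficient inherits a good rational approximation (Lemma~\ref{lem:16} and Proposition~\ref{prop:10}), slices in $x_1$, and then invokes Wooley's one-dimensional estimate \eqref{eq:189}, whose single $\log N$ comes from Vinogradov's mean value theorem rather than iterated differencing. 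The passage from $|\gamma_0| = d$ down to smaller $|\gamma_0|$ is handled by a downward induction combined with summation by parts (and Lemma~\ref{lem:11} to track the approximation of $Q^l\xi_{\gamma_0}$), not by further differencing in transversal directions. Your sketch skips all of this content; in particular, ``transfers the Diophantine approximation \ldots\ yielding the claimed bound'' is precisely the step that requires the work.
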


Throughout the proof of Theorem~\ref{thm:weyl-sums}, we may assume $1 < q < N^{\abs{\gamma_{0}}}$.
Otherwise, the estimate follows from the triangle inequality.
Moreover, we may assume that $\kappa$ is sufficiently large depending on $k,d$.
Implicit constants in this section may depend on the dimension $k$ and the degree $d$ of the polynomial $P$, but will be independent of its coefficients, integers $a, q$, and $N$.

\subsection{Reduction to the constant coefficient case}
Suppose that \eqref{eq:56} is known in the case $\phi\equiv\const$.
We pass to the general case by partitioning $\Omega$ into
$J\le C\kappa^{k \epsilon/(k+1)}$ sets $\Omega_j\subseteq\Omega$ of diameter at most $N\kappa^{-\epsilon/(k+1)}$, for some $C>0$.
Then we fix $m_j\in\Omega_j$ for every $1\le j\le J$ and observe
that
\begin{align*}
\abs[\Big]{\sum_{n \in \Omega \cap \Z^k} e(P(n))\phi(n)}
&\le \sum_{j=1}^J\abs[\Big]{\sum_{n \in \Omega_j \cap \Z^k} e(P(n))\phi(m_j)}
\\ & \quad +
\sum_{j=1}^J\abs[\Big]{\sum_{n \in \Omega_j \cap \Z^k} e(P(n))(\phi(n)-\phi(m_j))}.
\end{align*}
This yields \eqref{eq:56} for general $\phi$ with $\epsilon$ replaced by $\epsilon/(k+1)$.
Thus, from now on, we may assume that $\phi\equiv 1$.

It seems more efficient (in terms of the dependence of $\epsilon$ on $k,d$) to make the above reduction in the one-dimensional case, but this would require keeping the dependence of the exponential sums on $\phi$ throughout the remaining argument. Since the dependence given by that argument is most probably far from sharp, we will not keep track of it.

\subsection{One-dimensional, leading coefficient case}
\label{sec:weyl:1d}
In this section we prove the case $k=1$, $\gamma_{0}=(d)$ of Theorem~\ref{thm:weyl-sums}.
In this case $\Omega$ is an interval.
Since the polynomial $n\mapsto P(n+m)$ has the same leading coefficient as $P$, for simplicity of notation we may assume $\Omega = [1,N']$, for some $N'\leq N$.
The interval $\Omega$ either has length $\geq N/2$ or can be written as the symmetric difference of two intervals of length between $N/2$ and $N$.
Thus we may assume $N/2 \leq N' \leq N$, and it suffices to consider $N'=N$, since the form of the claimed estimate does not change when $N$ is multiplied by a bounded factor.

Consider first the case $d=1$. Then $P(n)=\xi_{(1)} n=\xi n$.
Assume that $\xi\not=0$ and $q\ge2$, we obtain
\[
\abs[\Big]{\sum_{n = 1}^{N} e(P(n))}
\lesssim\frac{1}{\norm{\xi}}
\lesssim
q
\leq
N/\kappa.
\]

In the case $d\ge2$, we will invoke Weyl estimate with logarithmic loss due to Wooley (see \cite[Remark after Theorem 1.5]{MR2912712})
\begin{align}
\label{eq:189}
\abs[\Big]{\sum_{n = 1}^{N} e(P(n))}
\lesssim
N \log N \bigg( \frac{1}{q} + \frac{1}{N} + \frac{q}{N^{d}} \bigg)^{\frac{1}{2d^2-2d+1}}.
\end{align}
Then by \eqref{eq:189} we get the desired claim.

\subsection{Multidimensional case}
\label{sec:weyl:l0..0}
We prove Theorem~\ref{thm:weyl-sums} for a fixed $k$ and $d$ by
downward induction on $\abs{\gamma_{0}}$.
This will require a certain
change of variables in our exponential sum, which will allow us to
reduce the matter to the situation when \eqref{eq:190} holds for the
multi-index $\gamma_{0}$, which is of the form $(l,0,\dotsc,0)$ for some $l\in\N$.
Then, we deduce the desired bound from the one-dimensional, leading coefficient case.

\subsubsection{Change of variables}
In this section we reduce the case of general $\abs{\gamma_{0}}=l$ to the case $\gamma_{0}=(l,0,\dotsc,0)$.
To this end, we find a linear map $L$ on $\R^{k}$ that restricts to an automorphism of $\Z^{k}$ and for which we can control the coefficient of $x_{1}^{l}$ in the polynomial $P(L(x))$.
This will be provided by the following result.

\begin{proposition}
\label{prop:10}
Under the assumptions of Theorem~\ref{thm:weyl-sums}, there is an automorphism $L$ of $\Z^k$ such that
\begin{align}
\label{eq:187}
\sum_{n \in \Omega \cap \Z^k} e(P(n))
=
\sum_{n \in \tilde{\Omega} \cap \Z^k} e(P(L(n))),
\end{align}
where $\tilde{\Omega} = L^{-1}[\Omega]$ and
$\tilde{\Omega} \subseteq B(0,CN)$ for some $C > 0$ depending only on
$d,k$.

Moreover, let $l=\abs{\gamma_0}$, $\nu= \binom{l+k-1}{k-1}$ and $\sigma$
be the coefficient of $x_{1}^{l}$ in the polynomial $P(L(x))$. Then there exists a constant
$C_{d, k}\ge1$ such that if $\kappa\ge C_{d, k}$ and $\delta\in(0, (4\nu)^{-1})$ then we can find integers $0\le a'<q'$ with $(a', q')=1$ and
$\kappa^{\delta}\le q' \leq q \kappa^{1/2}$ satisfying
\begin{align}
\label{eq:188}
\abs[\Big]{\sigma - \frac{a'}{q'}}
\leq
\frac{1}{q' q \kappa^{1/2}}.
\end{align}
\end{proposition}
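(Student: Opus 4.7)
The plan is to take $L$ to be a unimodular linear map $L_{\vec m}(x)=U_{\vec m} x$, where $U_{\vec m}$ is a unimodular $k\times k$ integer matrix whose first column equals a primitive integer vector $\vec m\in\Z^k$ with entries bounded by a constant depending only on $d$ and $k$. The remaining columns can be chosen via B\'ezout's identity so that $U_{\vec m}$ has bounded integer entries, hence so does $U_{\vec m}^{-1}$; this immediately yields \eqref{eq:187} together with $\tilde\Omega=L^{-1}(\Omega)\subseteq B(0,CN)$ for some $C=C_{d,k}$. Expanding $P(L_{\vec m}(x))=\sum_\gamma\xi_\gamma(U_{\vec m}x)^\gamma$ and extracting the coefficient of $x_1^l$, one observes that $(U_{\vec m}x)^\gamma$ is homogeneous of degree $|\gamma|$ and has $x_1^l$-coefficient equal to $\vec m^\gamma$ when $|\gamma|=l$ and zero otherwise, so
\[
\sigma(\vec m)=\sum_{|\gamma|=l}\xi_\gamma\vec m^\gamma.
\]

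To produce the Diophantine data \eqref{eq:188}, I would fix, once and for all, $\nu$ primitive vectors $\vec m^{(1)},\ldots,\vec m^{(\nu)}\in\Z^k$ with bounded entries such that the generalized Vandermonde matrix $V_{i,\gamma}=(\vec m^{(i)})^\gamma$, indexed by $|\gamma|=l$, is invertible over $\Q$; such a selection exists because the vanishing of $\det V$ is a nonzero polynomial condition on the test vectors. Cramer's rule then provides rationals $c_i$, whose absolute values and common denominator are bounded by $C_{d,k}$, with $\xi_{\gamma_0}=\sum_{i=1}^\nu c_i\sigma(\vec m^{(i)})$. For each $i$, Dirichlet's approximation theorem applied with parameter $q\kappa^{1/2}$ produces coprime integers $a_i,q_i$ satisfying $1\leq q_i\leq q\kappa^{1/2}$ and $|\sigma(\vec m^{(i)})-a_i/q_i|\leq (q_i q\kappa^{1/2})^{-1}$. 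If some $q_i$ happens to lie in $[\kappa^\delta,q\kappa^{1/2}]$, then the choices $L=L_{\vec m^{(i)}}$, $q'=q_i$, $a'=a_i$ prove \eqref{eq:188} and we are done.

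The main obstacle is to rule out the remaining case in which $q_i<\kappa^\delta$ for every $i$. Inserting the Dirichlet estimates into Cramer's identity yields $|\xi_{\gamma_0}-A/Q|\leq C_{d,k}(q\kappa^{1/2})^{-1}$, where $A\in\Z$ and the positive integer $Q$ satisfies $Q\leq C_{d,k}\prod_i q_i\leq C_{d,k}\kappa^{\nu\delta}<\kappa^{1/4}$ (using $\delta<1/(4\nu)$ and $\kappa\geq C_{d,k}$ large). Combining with $|\xi_{\gamma_0}-a/q|\leq 1/q^2\leq (q\kappa^{1/2})^{-1}$, which follows from \eqref{eq:190} and $\kappa\leq q$, the triangle inequality gives $|A/Q-a/q|\leq C'_{d,k}(q\kappa^{1/2})^{-1}$. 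I would then split into two subcases. If $A/Q=a/q$ as reduced fractions, then $q\mid Q$, which is impossible: in the subcase $\kappa=q$ one has $Q<\kappa=q$; in the subcase $\kappa=N^l/q$, combining $Q\geq q=N^l/\kappa$ with $Q\leq C_{d,k}\kappa^{\nu\delta}$ and $\kappa\leq N^{l/2}$ forces $\kappa^{1-\nu\delta}\leq C_{d,k}$, contradicting $\kappa\geq C_{d,k}$ since $\nu\delta<1/4$. If instead $A/Q\neq a/q$, then $1/(qQ)\leq|A/Q-a/q|\leq C'_{d,k}(q\kappa^{1/2})^{-1}$ forces $Q\geq \kappa^{1/2}/C'_{d,k}$, contradicting $Q<\kappa^{1/4}$ for $\kappa\geq C_{d,k}$ sufficiently large. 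This closes all cases and completes the proof.
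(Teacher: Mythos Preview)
Your proof is correct and follows essentially the same strategy as the paper's. The paper uses the explicit shears $L_j(x)=(x_1,x_2+j^{\mu_2}x_1,\dots,x_k+j^{\mu_k}x_1)$ (which are manifestly unimodular with bounded entries) in place of your Bézout completion of a primitive first column, and it obtains the linear relation \eqref{eq:186} via the Vandermonde system exactly as you do; the contradiction argument when all $q_i<\kappa^{\delta}$ is identical in spirit, though your split into the subcases $\kappa=q$ and $\kappa=N^{l}/q$ in the ``$A/Q=a/q$'' branch is unnecessary, since $q\mid Q$ already forces $Q\ge q\ge\kappa$, which contradicts $Q\le C_{d,k}\kappa^{\nu\delta}$ directly.
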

The construction of the automorphism from Proposition~\ref{prop:10} is very simple and is provided by the following lemma.

\begin{lemma}[{cf.~\cite[Lemma 1, p.~1305]{MR1719802}}]
\label{lem:16}
Let $k, l\in\N$ and let $\nu = \binom{l+k-1}{k-1}$ denote the dimension of the vector space of all polynomials with $k$ variables in $\R^k$ which are homogeneous of degree $l$.
Then there exist linear transformations $L_1, \ldots, L_\nu$ of $\R^k$ with integer coefficients and determinant $1$ so that each $L_j$ restricts to an automorphism of $\Z^k$. Moreover, for each $\gamma_0$ with $\abs{\gamma_0} = l$ there exist integers $c_0, \ldots, c_\nu$, with $c_0 > 0$, such that for every homogeneous polynomial $P$ of degree $l$,
 if $\theta$ is the coefficient of $x^{\gamma_0}$ of $P(x)$, and $\sigma_j$ is the coefficient of $x_1^l$ of $P(L_j (x))$, then
\begin{align}
\label{eq:186}
c_0 \theta = c_1 \sigma_1 + \ldots + c_\nu\sigma_\nu.
\end{align}
\end{lemma}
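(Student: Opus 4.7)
The plan is to take each $L_{j}$ to be a shear of the form
\[
L_{v}(x_{1},\dotsc,x_{k}) := (x_{1},\, x_{2}+v_{2}x_{1},\, \dotsc,\, x_{k}+v_{k}x_{1}),
\qquad v=(v_{2},\dotsc,v_{k})\in\Z^{k-1},
\]
for a vector $v=v^{(j)}$ to be chosen below. Each such $L_{v}$ is integer-valued with determinant one and hence restricts to an automorphism of $\Z^{k}$. The first step is the elementary computation that, for any homogeneous polynomial $P$ of degree $l$, setting $x_{2}=\dotsb=x_{k}=0$ in $P(L_{v}(x))$ produces $P(x_{1},v_{2}x_{1},\dotsc,v_{k}x_{1})=x_{1}^{l}P(1,v_{2},\dotsc,v_{k})$ by homogeneity, so the coefficient of $x_{1}^{l}$ in $P(L_{v}(x))$ equals $P(1,v)$.

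Let $V_{l}$ denote the $\nu$-dimensional space of homogeneous polynomials of degree $l$ in $k$ variables; each $v^{(j)}\in\Z^{k-1}$ then gives an evaluation functional $\sigma_{j}\in V_{l}^{*}$ via $\sigma_{j}(P):=P(1,v^{(j)})$. The core of the argument is to choose $v^{(1)},\dotsc,v^{(\nu)}$ so that $\sigma_{1},\dotsc,\sigma_{\nu}$ form a basis of $V_{l}^{*}$. Dehomogenization $x_{1}\mapsto 1$ identifies $V_{l}$ with the space of polynomials of total degree $\le l$ in $k-1$ variables, so this is equivalent to exhibiting $\nu$ integer points in $\R^{k-1}$ on which the total-degree-$l$ interpolation problem is unisolvent. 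I would use the principal lattice
\[
\Lambda:=\bigl\{(i_{2},\dotsc,i_{k})\in\N_{0}^{k-1}\,:\,i_{2}+\dotsb+i_{k}\le l\bigr\},
\]
which has exactly $\nu=\binom{l+k-1}{k-1}$ points.

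Once $L_{1},\dotsc,L_{\nu}$ are fixed in this way, the functionals $\sigma_{j}$ form a basis of $V_{l}^{*}$. For each multi-index $\gamma_{0}$ with $\abs{\gamma_{0}}=l$, the map $\tau_{\gamma_{0}}:P\mapsto$ (coefficient of $x^{\gamma_{0}}$ in $P$) lies in $V_{l}^{*}$ and therefore admits a unique expansion $\tau_{\gamma_{0}}=\sum_{j=1}^{\nu}\lambda_{j}\sigma_{j}$ with $\lambda_{j}\in\Q$; the rationality follows from Cramer's rule applied to the integer matrix representing the $\sigma_{j}$ in the monomial basis of $V_{l}$. Writing $\lambda_{j}=c_{j}/c_{0}$ with a common positive integer denominator $c_{0}$ then yields the identity \eqref{eq:186}.

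The main technical point is the unisolvency of $\Lambda$. Rather than computing the associated generalized Vandermonde determinant, I would argue by induction on $l+k$: if $Q$ is a polynomial of total degree $\le l$ in $y_{2},\dotsc,y_{k}$ that vanishes on $\Lambda$, then the restriction $Q(y_{2},\dotsc,y_{k-1},0)$ vanishes on the principal lattice in $k-2$ variables of degree $l$, so it is identically zero by the inductive hypothesis. Consequently $y_{k}\mid Q$, and writing $Q=y_{k}Q_{1}$ one checks that $Q_{1}$ has degree $\le l-1$ and vanishes on the principal lattice of degree $l-1$ in $k-1$ variables, to which the induction again applies, forcing $Q\equiv 0$.
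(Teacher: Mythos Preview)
Your proof is correct and follows the same overall scheme as the paper: take shears $L_{v}(x)=(x_{1},x_{2}+v_{2}x_{1},\dotsc,x_{k}+v_{k}x_{1})$, observe that the $x_{1}^{l}$-coefficient of $P\circ L_{v}$ is $P(1,v)$, and then choose $\nu$ integer vectors $v^{(j)}$ so that the evaluations $P\mapsto P(1,v^{(j)})$ are linearly independent on $V_{l}$.

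The only genuine difference is the choice of nodes and the invertibility argument. The paper takes $v^{(j)}=(j^{\mu_{2}},\dotsc,j^{\mu_{k}})$ with rapidly increasing integers $\mu_{2}<\dotsb<\mu_{k}$, so that the map $\alpha\mapsto\sum_{i}\mu_{i}\alpha_{i}$ is injective on $\{\alpha:|\alpha|=l\}$; this makes $\sigma_{j}=\sum_{|\alpha|=l}\theta_{\alpha}j^{\mu(\alpha)}$ a generalized Vandermonde system, whose determinant is nonzero because the exponents are distinct. You instead use the principal lattice $\Lambda$ and an inductive unisolvency argument. Your route is more explicit about the interpolation-theoretic content and avoids invoking the Vandermonde determinant, while the paper's route is shorter once the exponents $\mu_{i}$ are in place. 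One small point: in your inductive step, $Q_{1}$ vanishes not on the principal lattice of degree $l-1$ itself but on its translate by $(0,\dotsc,0,1)$; since unisolvency is translation-invariant this is harmless, but it is worth saying so explicitly.
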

\begin{proof}
For every multi-index $\alpha$ such that $\abs{\alpha}=l$, we define the function $\mu(\alpha)=\sum_{i=1}^k\mu_i\alpha_i$, where
$1=\mu_1 <\mu_2<\ldots< \mu_k$ are suitably chosen rapidly growing integers, which ensure that $\mu(\alpha)\not=\mu(\alpha')$ for any $\alpha\not=\alpha'$.

For each $j\in \N_{\nu}$, we consider the $\Z^{k}$-automorphism
\[
L_{j}(x)=\big(x_{1},x_{2}+j^{\mu_2}x_{1},\dotsc,x_{k}+j^{\mu_k}x_{1}\big).
\]
If $\sigma_j$ is the coefficient of $x_{1}^{l}$ in the polynomial $P(L_{j}(x))$, then
\begin{align}
\label{eq:185}
\sigma_j=\sum_{\abs{\alpha}=l} \theta_{\alpha} j^{\mu(\alpha)},\quad \text{ for }\quad j\in \N_{\nu}.
\end{align}
The identities from \eqref{eq:185} define the generalized Vandermonde matrix $V$ of size $\nu\times\nu$ such that $V\theta=\sigma$, where $\theta=(\theta_{\alpha}: \abs{\alpha}=l)$ and $\sigma=(\sigma_1, \ldots,\sigma_{\nu})$.
Since $\mu(\alpha)\not=\mu(\alpha')$ for any $\alpha\not=\alpha'$, it is not difficult to show that $\det V\not=0$, hence $V$ is invertible, and \eqref{eq:186} holds.
\end{proof}

\begin{proof}[Proof of Proposition~\ref{prop:10}]
Let $L_{1},\dots,L_{\nu}$ be the linear maps given by Lemma~\ref{lem:16}.
Denote the coefficient of $x_{1}^{l}$ in the polynomial $P(L_{j}(x))$ by $\sigma_{j}$.
In fact, by Lemma~\ref{lem:16},
\begin{equation}
\label{eq:160}
\xi_{\gamma_{0}} = c_0^{-1} (c_1 \sigma_1 + \ldots + c_\nu \sigma_\nu),
\end{equation}
with some integers $c_{0},\dotsc,c_{\nu}$ depending only on $d,k$, with $c_0>0$.
Fix $\delta>0$ such that $4\delta\nu<1$.
By Dirichlet's principle, for all $j\in\N_{\nu}$, there exist reduced fractions $a_j/q_j$ such that
\begin{equation}
\label{eq:183}
\abs[\Big]{\sigma_j - \frac{a_j}{q_j}}
\leq
\frac{1}{q_j q \kappa^{1/2}}
\leq
\frac{1}{q_j^{2}},
\quad \text{ and } \quad
1\le q_j \leq q \kappa^{1/2},
\quad \text{ and } \quad
(a_j, q_j) = 1.
\end{equation}
Suppose first that
\begin{equation}
\label{eq:qj-all-major}
1\le q_j \leq \kappa^{\delta}
\quad\text{for every}\quad
j\in\N_{\nu}.
\end{equation}
This will lead to a contradiction provided that $\kappa\ge C_{d, k}:=(2(\abs{c_0}+\ldots+\abs{c_{\nu}}))^{4}$.
We write
$
c_0^{-1}(c_1 a_1/q_1 + \ldots +c_\nu a_\nu /q_{\nu})
$
in the reduced form $\tilde a/\tilde q$ with $(\tilde a, \tilde q) = 1$, then $1\le \tilde q \leq c_{0} \kappa^{\delta\nu}$.
Suppose that $\tilde a/\tilde q = a/q$. Thus
\[
\kappa \leq q =\tilde q \leq c_{0} \kappa^{\delta\nu} \iff \kappa^{1-\delta\nu}\le c_0,
\]
and this is a contradiction, since $\kappa\ge C_{d, k}$.
In the remaining case $\tilde a/\tilde q \neq a/q$, we get
\[
\frac{1}{q \tilde q}
\leq
\abs[\Big]{ \frac{\tilde a}{\tilde q} - \frac{a}{q} }
\leq
\abs[\Big]{ \xi_{\gamma_{0}} - \frac{a}{q} }
+
\abs[\Big]{ \xi_{\gamma_{0}} - \frac{\tilde a}{\tilde q} }
\leq
\frac{1}{q^2} + c_{0}^{-1} \sum_{j=1}^{\nu} \abs{c_{j}} \frac{1}{q_{j} q \kappa^{1/2}}
\]
using \eqref{eq:160} and \eqref{eq:183}.
Thus, for $C=c_{0}^{-1} \sum_{j=1}^{\nu} \abs{c_{j}}$, we have
\[
c_{0}^{-1} \kappa^{-\nu\delta}
\leq
\frac{1}{\tilde q}
\leq
\frac{1}{q} + \frac{C}{\kappa^{1/2}}
\leq
\kappa^{-1} + C \kappa^{-1/2}\iff \kappa^{1/2-\nu\delta}\le c_0(1+C),
\]
and this is again a contradiction, since $\kappa\ge C_{d, k}$.

Thus we have shown that \eqref{eq:qj-all-major} is impossible, so $q_j \geq \kappa^{\delta}$ for at least one $j \in \N_{\nu}$.
In that case, we see
\begin{equation*}
\sum_{n \in \Omega \cap \Z^k} e(P(n))
=
\sum_{n \in \tilde{\Omega} \cap \Z^k} e(P(L_{j}(n)))
\end{equation*}
with $\tilde{\Omega} = L_{j}^{-1}[\Omega]$.
We also obtain that $\tilde{\Omega} \subseteq B(0,CN)$ for some $C > 0$ depending only on $d,k$, since $\Omega \subseteq B(0,N)$. This proves the desired claim in \eqref{eq:187} and \eqref{eq:188} for $L=L_j$, $\sigma =\sigma_j$, $a'=a_j$ and $q'=q_j$.
\end{proof}

\subsubsection{The case $\abs{\gamma_{0}} = d$}
To verify the base case of
our backward induction for $\abs{\gamma_{0}} = d$, we may assume that
$\kappa\ge C_{d, k}$, with $C_{d, k}$ which was defined in Proposition~\ref{prop:10}.
Otherwise, the desired bound follows from the triangle
inequality, since we allow the implicit constant in \eqref{eq:56} to
depend on $d$ and $k$.
Proposition~\ref{prop:10} with $l=d$ and $\delta<1/(4\nu)$ provides
an automorphism $L$ of $\Z^k$ such that \eqref{eq:187} holds. If
$\sigma$ is the coefficient of $x_{1}^{d}$ in the polynomial
$P(L(x))$, then also \eqref{eq:188} holds for some integers $a'$ and
$q'$. Next, for each $n\in\Z^k$ we write $n=(n_1, n')$, where $n_1\in\Z$ and $n'\in\Z^{k-1}$, and by \eqref{eq:187} we obtain
\begin{align}
\label{eq:191}
\abs[\Big]{\sum_{n \in \Omega \cap \Z^k} e(P(n))}= \abs[\Big]{
\sum_{n \in \tilde{\Omega} \cap \Z^k} e(P(L(n)))}\le\sum_{\substack{n'\in\Z^{k-1}\\
\abs{n'}\le CN}}\abs[\Big]{\sum_{\substack{n_1\in\Z\\ (n_1, n')\in \tilde{\Omega} \cap \Z^k}} e(P(L(n_1, n'))) }.
\end{align}
We reduced the matter to the case
$\gamma_{0} = (d,0,\dotsc,0)$. We now deduce the desired bound from
the one-dimensional, leading coefficient case. Indeed, the inner sum on the right-hand side of \eqref{eq:191} can be
dominated by a constant multiple of $N\tilde\kappa^{-\epsilon}\log(N+1)$, where
$\tilde\kappa=\min\{q', N^d/q'\}$. Since
$\kappa^{\delta}\le q'\le q\kappa^{1/2}$, then
$\tilde\kappa\ge\kappa^{-1/2}\min\{\kappa^{\delta+1/2}, N^d/q\}\ge
\kappa^{\delta}$. Therefore, we conclude
\begin{align*}
\abs[\Big]{\sum_{n \in \Omega \cap \Z^k} e(P(n))}\lesssim N^k\kappa^{-\epsilon\delta}\log(N+1)
\end{align*}
and we are done.

\subsubsection{The case $|\gamma_0|=l$ for  $\gamma_{0} = (l,0,\dotsc,0)$}
For the inductive step, let $1\leq l\leq d$ and assume that the result
is known for all $\gamma\in\N_0^k$ with $l<\abs{\gamma}\leq d$. We will
consider first the special case when $\gamma_{0} = (l,0,\dotsc,0)$.
Before we do this, we need to recall simple but significant
fact about Diophantine approximations.

\begin{lemma}[cf.~{\cite[Lemma 1, p.~1298]{MR1719802}}]
\label{lem:11}
Let $\theta\in\R$ and $Q\in\N$.
Suppose that
\[
\abs[\Big]{\theta - \frac{a}{q}}
\leq
\frac{1}{q^2}
\]
with $(a,q)=1$ and $0\le a<q \leq M$ for some $M>0$.
Then there is a reduced fraction $a'/q'$ so that $(a', q') = 1$ and
\[
\abs[\Big]{Q \theta - \frac{a'}{q'}}
\leq
\frac{1}{2q'M}
\]
with $q/(2Q) \leq q' \leq 2 M$.
\end{lemma}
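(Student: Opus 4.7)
My plan is to combine Dirichlet's approximation theorem applied to $Q\theta$ with a short case analysis powered by the given hypothesis on $\theta$.

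First, since $q \geq 1$ we have $M \geq 1$, and by Dirichlet's theorem applied to $Q\theta$ with denominator bound $\lfloor 2M \rfloor \geq 1$, there exist integers $a''$ and $q''$ with $1 \leq q'' \leq 2M$ and $\abs{Q\theta - a''/q''} \leq 1/(2q''M)$. Reducing this fraction to lowest terms $a'/q'$ with $(a',q')=1$ preserves the value of the fraction and satisfies $q' \leq q'' \leq 2M$, so the approximation inequality $\abs{Q\theta - a'/q'} \leq 1/(2q'M)$ also holds (the right-hand side only gets larger upon replacing $q''$ by $q'$). This yields the upper bound on $q'$ and the approximation quality.

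The remaining task is the lower bound $q' \geq q/(2Q)$. The hypothesis gives $\abs{Q\theta - Qa/q} \leq Q/q^2$. I would split into two cases according to whether the rationals $a'/q'$ and $Qa/q$ coincide. If they coincide, then since $(a,q)=1$ we have $\gcd(Qa,q) = \gcd(Q,q) \leq Q$, so the reduced form of $Qa/q$ has denominator $q/\gcd(Q,q) \geq q/Q \geq q/(2Q)$, which forces $q' \geq q/(2Q)$. If they differ, then $Qaq' - a'q$ is a nonzero integer, hence
\[
\abs[\Big]{\frac{Qa}{q} - \frac{a'}{q'}} \geq \frac{1}{qq'}.
\]
Combined with the triangle inequality and the two approximation bounds,
\[
\frac{1}{qq'} \leq \frac{Q}{q^2} + \frac{1}{2q'M},
\]
and multiplying through by $qq'$ and using $q \leq M$ gives $1 \leq Qq'/q + q/(2M) \leq Qq'/q + 1/2$, so $q' \geq q/(2Q)$ as required.

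The only mildly delicate step is Case~B, where one needs to carefully use that $Qaq' - a'q$ is a \emph{nonzero} integer (rather than merely small) so as to pin down the reciprocal denominator lower bound $1/(qq')$; the rest is routine triangle-inequality bookkeeping. No technical obstacles remain once this observation is in place.
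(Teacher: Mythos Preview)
Your proof is correct and follows essentially the same route as the paper's: apply Dirichlet's theorem to $Q\theta$ with parameter $2M$, then split on whether $a'/q'=Qa/q$ and, in the unequal case, use the nonzero-integer lower bound $1/(qq')$ together with the triangle inequality and $q\le M$ to force $q'\ge q/(2Q)$. The only cosmetic differences are that you explicitly reduce the Dirichlet approximant to lowest terms and multiply the final inequality through by $qq'$ rather than by $q$; neither changes the argument.
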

\begin{proof}[Proof of Lemma~\ref{lem:11}]
By Dirichlet's principle, there exists a reduced fraction $a'/q'$ such that $0\le a'<q' \leq 2 M$, $(a', q') = 1$, and
\begin{equation}
\label{eq:182}
\abs[\Big]{Q \theta - \frac{a'}{q'}}
\leq
\frac{1}{2q'M}.
\end{equation}
We have to show that $q/(2Q) \leq q'$.
There are two cases.
If $a'/q' = Q a/q$, then $q' \geq q/Q$, and we are done.
Otherwise $a'/q' \neq Q a /q$, and, since the difference of these numbers can be written as a (not necessarily reduced) fraction with denominator $qq'$, we have
\[
\frac{1}{q q'}
\leq
\abs[\Big]{ \frac{a'}{q'} - \frac{Qa}{q} }
\leq
\abs[\Big]{ Q \theta - \frac{a'}{q'} }
+
Q \abs[\Big]{ \theta - \frac{a}{q} }
\leq
\frac{1}{2q' M}
+
\frac{Q}{q^{2}},
\]
so
\[
\frac{1}{q'}
\leq
\frac{q}{2 q' M}
+
\frac{Q}{q}
\leq
\frac{1}{2 q'}
+
\frac{Q}{q},
\]
and the conclusion follows.
\end{proof}

We now return to the proof of the special case $\gamma_{0} = (l,0,\dotsc,0)$. Let $\chi > 0$ be chosen later.
By Dirichlet's principle, for every index $\gamma\in\N_0^k$ with $l<\abs{\gamma} \le d$, there is a reduced fraction $a_\gamma/q_\gamma$ such that
\[
\abs[\Big]{{\xi}_\gamma - \frac{a_\gamma}{q_\gamma}}
\leq
\frac{\kappa^{\chi}}{q_\gamma N^{\abs{\gamma}}}
\]
with $(a_\gamma, q_\gamma) = 1$ and $1\le q_\gamma \leq N^{\abs{\gamma}}/\kappa^{\chi}$.
If $q_\gamma\ge \kappa^{\chi} $ for some $\gamma\in\N_0^k$ such that $l<\abs{\gamma}\le d$, then by the induction hypothesis we are done.

Suppose now that $1\le q_\gamma \leq \kappa^{\chi}$ for all $\gamma\in\N_0^k$ such that $l<\abs{\gamma} \le d$.
Then
\[
Q := \lcm \Set{q_\gamma \given l<\abs{\gamma} \le d} \leq \kappa^{u \chi},
\]
where $u=\abs{\Set{q_\gamma \given l<\abs{\gamma} \le d}}$. Partitioning
$\Omega$ into the sets of the form
$\Omega \cap (Q\Z + r) \times \Set{n'}$, where $n'\in\Z^{k-1}$ and
$r\in\Set{0,\dotsc,Q-1}$, we see that it suffices to show
\begin{equation}
\label{eq:163}
\abs[\Big]{\sum_{m\in\Z : (Qm+r,n')\in\Omega} e(P(Qm + r,n'))}
\lesssim
(N/Q) \kappa^{-\epsilon} \log (N+1).
\end{equation}
By the convexity assumption on $\Omega$, the sum in \eqref{eq:163} runs over a disjoint union of finitely many intervals.
We consider each interval separately, so we sum over $m\in\Z$ such that $-N/Q \lesssim M_{0} \leq m \leq M_{1} \lesssim N/Q$.
After this restriction, we write the left-hand side of \eqref{eq:163} in the form
\[
\sum_{m=M_{0}}^{M_{1}} e(P(Qm + r,n'))
=
\sum_{m = M_0}^{M_1}
A_{m} B_{m},
\]
where
\begin{align*}
A_{m} &= e\Big(\sum_{l<\abs{\gamma} \le d}\xi_\gamma (Qm + r, n')^\gamma\Big),\\
B_{m} &= e(Q^{l} \xi_{\gamma_{0}} m^{l} + R(m)).
\end{align*}
and $R$ is a polynomial in $m$ of degree $\le l-1$ depending on $r$ and $n'$.
Summation by parts gives
\[
\sum_{m = M_0}^{M_1} A_{m} B_{m}
=
\sum_{m = M_0}^{M_1-1} (A_{m} - A_{m+1}) S_{m_1}
+ A_{M_{1}} S_{M_{1}}
\]
with $S_{m} = \sum_{n = M_{0}}^{m} B_{n}$.
Using the rational approximation of $\xi_{\gamma}$, we estimate the difference on the right-hand side by
\begin{align*}
\abs{A_{m} - A_{m+1}}
&\lesssim
\norm[\Big]{\sum_{l<\abs{\gamma} \le d}\xi_\gamma \big((Qm + r, n')^\gamma-(Q(m+1) + r, n')^\gamma\big) \mod 1}_{\R/\Z}\\
&=
\norm[\Big]{\sum_{l<\abs{\gamma} \le d}(\xi_\gamma - a_\gamma/q_\gamma) \big((Qm + r, n')^\gamma-(Q(m+1) + r, n')^\gamma\big) \mod 1}_{\R/\Z}\\
&\lesssim
\sum_{l<\abs{\gamma} \le d} \abs{\xi_\gamma - a_\gamma/q_\gamma} QN^{\abs{\gamma}-1}\\
&\lesssim
N^{-1} Q \kappa^{\chi}.
\end{align*}
Since $\abs{A_{M_{1}}}=1$, it remains to estimate $S_{m}$. This is a
one-dimensional exponential sum, and we can estimate it using the case
$k=1$, $d=\abs{\gamma_{0}}$ of Theorem~\ref{thm:weyl-sums}. To this
end, we need the information on rational approximation of the leading
coefficient $Q^{l}\xi_{\gamma_{0}}$. Applying Lemma~\ref{lem:11} to
$\xi_{\gamma_{0}}$ with $Q^{l}$ in place of $Q$ and $M=q$, we obtain a
reduced fraction $a'/q'$ such that
\[
\abs[\Big]{Q^{l} \xi_{\gamma_{0}} - \frac{a'}{q'}}
\leq
\frac{1}{2 q' q}
\leq
\frac{1}{(q')^2}
\]
with $(a', q') = 1$, and $\kappa^{1-lu\chi}/2 \leq q' \leq 2 q \leq 2 N^{l}/\kappa$.
By the one-dimensional result, it follows that
\[
\abs{S_{m}}
\lesssim
(N/Q) \tilde\kappa^{-\epsilon'} \log (N+1),
\]
for some $\epsilon'>0$, where $\tilde\kappa \ge \kappa^{1-lu\chi}/2$.
Hence,
\begin{align*}
\abs[\Big]{\sum_{m_1 = M_0}^{M_1} A_{m_1} B_{m_1}}
&\lesssim
\big((N/Q) N^{-1}Q \kappa^{\chi}+1\big)(N/Q) \tilde\kappa^{-\epsilon'} \log (N+1)\\
&\lesssim
(N/Q) \kappa^{-\epsilon'+\chi(1+lu\epsilon')} \log (N+1),
\end{align*}
which suffices provided that $\chi>0$ is small enough.

\subsubsection{The case $\abs{\gamma_{0}} = l$}
We now deduce the general case $\abs{\gamma_{0}} = l$ from the previous case for $\gamma_{0}=(l,0,\dotsc,0)$.
As in the case $\abs{\gamma_{0}} = d$, we may assume that
$\kappa\ge C_{d, k}$, with $C_{d, k}$ which was defined in Proposition~\ref{prop:10}.
Applying Proposition~\ref{prop:10}, with $\delta<1/(4\nu)$, we obtain
an automorphism $L$ of $\Z^k$ such that
\begin{align*}
\abs[\Big]{\sum_{n \in \Omega \cap \Z^k} e(P(n))}= \abs[\Big]{
\sum_{n \in \tilde{\Omega} \cap \Z^k} e(P(L(n)))}.
\end{align*}
If $\sigma$ is the coefficient of $x_{1}^{l}$ in the polynomial
$P(L(x))$, then also \eqref{eq:188} holds for some integers $a'$ and
$q'$.
Invoking now the case
$\gamma_{0} = (l,0,\dotsc,0)$ with $\kappa$ replaced by $\kappa^{\delta}$, we obtain the claim and this completes the induction.


\printbibliography

\end{document}